\begin{document}

\title{\bfseries  \Large Killing Vector Fields on Multiply Warped Products with a Semi-symmetric Metric Connection}
\author{\normalsize Quan Qu
\thanks{Corresponding author.
\newline \mbox{} \quad\, E-mail addresses: quq453@nenu.edu.cn.
\newline  \mbox{} \quad\,   School of Mathematics and Statistics, Northeast Normal University, Changchun Jilin, 130024, China}}\date{}
\maketitle

\begin{abstract}
In this paper, we define a semi-symmetric metric Killing vector field, then study semi-symmetric metric Killing vector fields on warped and multiply warped products with a semi-symmetric metric connection. We also study Killing and 2-Killing vector fields on multiply warped products.
\paragraph{Keywords:}Warped products; multiply warped products; semi-symmetric metric connection; Killing vector fields; semi-symmetric metric Killing vector fields; 2-Killing vector fields.
\end{abstract}
\paragraph{} \mbox{}\quad \, \normalsize 2010 Mathematics Subject Classification. 53B05, 53C21, 53C25, 53C50, 53C80.
\section{\large Introduction}
The warped product $M_{1} \times _{f}M_{2}$ of two pseudo-Riemannian manifolds $(M_{1},g_{1})$ and $(M_{2},g_{2})$ with a smooth function $f : M_{1} \to(0,\infty)$ is a product manifold of form $M_{1} \times M_{2}$ with the metric tensor $g = g_{1}\oplus f^2g_{2}$. Here, $(M_{1},g_{1})$ is called the base manifold and $(M_{2},g_{2})$ is called as the fiber manifold and $f$ is called as the warping function. The concept of warped products was first introduced by Bishop and O'Neill in [1] to construct examples of Riemannian manifolds with negative curvature.  \par
One can generalize warped products to multiply warped products. A multiply warped product $(M,g)$ is the product manifold $M=B \times_{f_{1}}M_{1} \times_{f_{2}}M_{2}\times\cdots \times_{f_{m}}M_{m}$ with the metric $g=g_{B}\oplus f_{1}^2g_{M_{1}}\oplus f_{2}^2g_{M_{2}}\cdots\oplus f_{m}^2g_{M_{m}}$, where for each $i\in\{1,\cdots,m\},f_{i}:B \to(0,\infty)$ is smooth and $(M_{i},g_{M_{i}})$ is a pseudo-Riemannian manifold.\par
Killing vector fields have been studied on Riemannian and pseudo-Riemannian manifolds for a long time. The problems of existence and characterization of Killing vector fields are important and widely discussed by both mathematicians and physicists. In [2], the concept of 2-Killing vector fields, as a new generalization of Killing vector fields, was first introduced and studied on Riemannian manifolds. In [3], Shenawy and \"Unal studied 2-Killing vector fields on warped product manifolds, and applied their result to standard static space-times.\par
The definition of a semi-symmetric metric connection was given by H. Hayden in [4]. In 1970, K. Yano in [5] considered a semi-symmetric metric connection and studied some of its properties. Motivated by the Yano's result, in [6], Sular and \"Ozg\"ur studied warped product manifolds with a semi-symmetric metric connection. In [7], professor Yong Wang studied multiply warped products with a semi-symmetric metric connection.

This paper is arranged as follows. In Section 2, we introduce a semi-symmetric metric connection, and then recall the definition of warped products and multiply warped products. In Section 3, we define a semi-symmetric metric Killing vector field, then study semi-symmetric metric Killing vector fields on warped products with a semi-symmetric metric connection. In Section 4, we study semi-symmetric metric Killing vector fields on multiply warped products with a semi-symmetric metric connection. In Section 5, we study Killing vector fields on multiply warped products. Finally in Section 6, we study 2-Killing vector fields on multiply warped products.
\section{\large Preliminaries}\label{sec2}
Let $ M $ be a Riemannian manifold with Riemannian metric $g$. A linear connection $\overline\nabla$ on a Riemannian manifold $M$ is called a semi-symmetric connection if the torsion tensor $T$ of the connection $\overline\nabla$
$$T(X, Y ) = \overline\nabla_{X}Y-\overline\nabla_{Y}X-[X,Y] \eqno{(1)}$$
satisfies  $$T(X, Y ) = \pi(Y)X-\pi(X)Y  \eqno{(2)}$$
where $\pi$ is a 1-form associated with the vector field $P$ on $M$ defined by $\pi(X)=g(X,P)$. $\overline\nabla$ is called a semi-symmetric metric connection if it satisfies $\overline\nabla g=0$.

If $\nabla$ is the Levi-Civita connection of $M$, the semi-symmetric metric connection $\overline\nabla$ is given by
$$\overline\nabla_{X}Y=\nabla_{X}Y+\pi(Y)X-g(X,Y)P, \eqno{(3)}$$
(see [5]).\par
The $(1,3)$ type curvature $R$ is defined by $R(X,Y)=\nabla_{X}\nabla_{Y}-\nabla_{Y}\nabla_{X}-\nabla_{[X,Y]},$ and
$(0,4)$ type curvature is defined by $R(X,Y,Z,W)=g(R(X,Y)Z,W).$

We recall the definition of warped product and multiply warped product as follows:
\newtheorem{Definition}{Definition}[section]
\begin{Definition}
Let $(M_{1},g_{1})$ and $(M_{2},g_{2})$ be two pseudo-Riemannian manifolds,$\;f:M_{1} \to(0,\infty)$ be a smooth function. The {\bfseries warped product} $M=M_{1}\times _{f}M_{2}$ is the product manifold $M_{1} \times M_{2}$ with the metric tensor $g = g_{1}\oplus f^2g_{2}$. The function $f$ is called the warping function of the warped product.
\end{Definition}

\begin{Definition}
Let $(B,g_{B})$ and $(M_{i},g_{i})$ be pseudo-Riemannian manifolds,$\;f_{i}:B \to(0,\infty)$ be smooth functions, where for each $i\in\{1,\cdots,m\}$. The {\bfseries multiply warped product} $M=B\times _{f_{1}}M_{1}\times _{f_{2}}M_{2}\times\cdots\times _{f_{m}}M_{m}$ is the product manifold $B\times M_{1}\times M_{2}\times\cdots\times M_{m}$ with the metric tensor $g = g_{B}\oplus f_{1}^2g_{1}\oplus f_{2}^2g_{2}\oplus\cdots\oplus f_{m}^2g_{m}$. The function $f_{i}$ is called the warping function of the multiply warped product.
\end{Definition}

\newtheorem{Lemma}[Definition]{Lemma}

\section{\large Semi-symmetric Metric Killing Vector Fields on Warped Products with a Semi-symmetric Metric Connection}
\begin{Lemma}\label{l1}$^{[6]}$
Let $M=M_{1}\times _{f}M_{2}$ be a warped product with $\overline\nabla$. If $X_{1},Y_{1}\in\Gamma(TM_{1}),\;X_{2},Y_{2}\in\Gamma(TM_{2})$ and $P\in\Gamma(TM_{1}),$ then\\
$(1)\overline \nabla_{X_{1}}Y_{1}=\overline \nabla_{X_{1}}^{1}Y_{1}; $  \\
$(2)\overline \nabla_{X_{1}}Y_{2}=\frac{X_{1}f}{f}Y_{2};$   \\
$(3)\overline \nabla_{Y_{2}}X_{1}=\Big[\frac{X_{1}f}{f}+\pi(X_{1})\Big]Y_{2}; $   \\
$(4)\overline \nabla_{X_{2}}Y_{2}=-fg_{2}(X_{2},Y_{2})\mathrm{grad}f+\nabla_{X_{2}}^{2}Y_{2}-f^{2}g_{2}(X_{2},Y_{2})P.$
\end{Lemma}

\begin{Lemma}\label{l1}$^{[6]}$
Let $M=M_{1}\times _{f}M_{2}$ be a warped product with $\overline\nabla$. If $X_{1},Y_{1}\in\Gamma(TM_{1}),\;X_{2},Y_{2}\in\Gamma(TM_{2})$ and $P\in\Gamma(TM_{2}),$ then\\
$(1)\overline \nabla_{X_{1}}Y_{1}=\nabla_{X_{1}}^{1}Y_{1}-g_{1}(X_{1},Y_{1})P; $  \\
$(2)\overline \nabla_{X_{1}}Y_{2}=\frac{X_{1}f}{f}Y_{2}+\pi(Y_{2})X_{1};$   \\
$(3)\overline \nabla_{Y_{2}}X_{1}=\frac{X_{1}f}{f}Y_{2}; $   \\
$(4)\overline \nabla_{X_{2}}Y_{2}=
-fg_{2}(X_{2},Y_{2})\mathrm{grad}f+\nabla_{X_{2}}^{2}Y_{2}+\pi(Y_{2})X_{2}-f^{2}g_{2}(X_{2},Y_{2})P.$
\end{Lemma}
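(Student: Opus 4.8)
The plan is to derive all four identities by substituting the O'Neill connection formulas for the Levi-Civita connection $\nabla$ of the warped product into the defining relation $(3)$ for $\overline\nabla$. The structure is identical to the previous lemma; the only change is that now $P\in\Gamma(TM_{2})$ rather than $\Gamma(TM_{1})$, which reverses which of the two correction terms $\pi(Y)X$ and $-g(X,Y)P$ survives in each case.

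First I would record the underlying Levi-Civita formulas for $M_{1}\times_{f}M_{2}$: namely $\nabla_{X_{1}}Y_{1}=\nabla^{1}_{X_{1}}Y_{1}$, $\nabla_{X_{1}}Y_{2}=\nabla_{Y_{2}}X_{1}=\frac{X_{1}f}{f}Y_{2}$, and $\nabla_{X_{2}}Y_{2}=-fg_{2}(X_{2},Y_{2})\mathrm{grad}f+\nabla^{2}_{X_{2}}Y_{2}$. These are exactly the terms that remain after deleting the semi-symmetric corrections, so they match the statement once the $P$-terms are reinstated.

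Second I would evaluate the one-form $\pi=g(\cdot,P)$ on horizontal and vertical arguments. Since the metric splits orthogonally as $g=g_{1}\oplus f^{2}g_{2}$ and $P\in\Gamma(TM_{2})$, we have $\pi(X_{1})=g(X_{1},P)=0$ for every base field $X_{1}$, whereas $\pi(Y_{2})=f^{2}g_{2}(Y_{2},P)$ is generally nonzero. I would also note the orthogonality $g(X_{1},Y_{2})=0$ and the warping relation $g(X_{2},Y_{2})=f^{2}g_{2}(X_{2},Y_{2})$ for the fiber metric.

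With these in hand, each identity follows by one substitution into $(3)$. In $(1)$ the term $\pi(Y_{1})X_{1}$ drops out, leaving $\nabla^{1}_{X_{1}}Y_{1}-g_{1}(X_{1},Y_{1})P$; in $(2)$ the term $g(X_{1},Y_{2})P$ drops out; in $(3)$ both corrections vanish, since $\pi(X_{1})=0$ and $g(Y_{2},X_{1})=0$; and in $(4)$ both corrections survive, producing the extra terms $\pi(Y_{2})X_{2}-f^{2}g_{2}(X_{2},Y_{2})P$. There is no genuine obstacle here: the argument is a bookkeeping exercise in which orthogonality kills the appropriate terms. The single point demanding care is tracking the warping factor, so that the last term of $(4)$ emerges as $-f^{2}g_{2}(X_{2},Y_{2})P$ and not $-g_{2}(X_{2},Y_{2})P$.
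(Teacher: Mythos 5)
Your proof is correct. The paper states this lemma without proof (it is quoted from reference [6]), and your derivation --- substituting the O'Neill formulas for the Levi-Civita connection of $M_{1}\times_{f}M_{2}$ into equation $(3)$ and using the orthogonality of base and fiber, $\pi(X_{1})=g(X_{1},P)=0$ for vertical $P$, and $g(X_{2},Y_{2})=f^{2}g_{2}(X_{2},Y_{2})$ --- is exactly the standard argument that establishes it.
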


\begin{Lemma}
Let $M$ be a pseudo-Riemannian manifold, $\zeta\in\Gamma(TM),\,\mathcal{L}$ is the Lie derivative on $M$ with respect to $\zeta,$ then:
$$(\mathcal{L}_{\zeta}g)(X,Y)=g(\nabla_{X}\zeta,Y)+g(\nabla_{Y}\zeta,X)  \eqno{(4)} $$
for any vector fields $X,Y\in\Gamma(TM).$
\end{Lemma}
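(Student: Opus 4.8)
The plan is to compute the left-hand side directly from the definition of the Lie derivative acting on a $(0,2)$-tensor and then invoke the two defining properties of the Levi-Civita connection $\nabla$, namely metric compatibility and torsion-freeness. Recall that for any vector fields $X,Y$ the Lie derivative of $g$ along $\zeta$ satisfies
$$(\mathcal{L}_{\zeta}g)(X,Y)=\zeta\big(g(X,Y)\big)-g\big([\zeta,X],Y\big)-g\big(X,[\zeta,Y]\big).$$
This is the starting identity I would write down first.

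Next I would rewrite each of the three terms. For the first term, metric compatibility $\nabla g=0$ gives $\zeta\big(g(X,Y)\big)=g(\nabla_{\zeta}X,Y)+g(X,\nabla_{\zeta}Y)$. For the two bracket terms, torsion-freeness of $\nabla$ yields $[\zeta,X]=\nabla_{\zeta}X-\nabla_{X}\zeta$ and $[\zeta,Y]=\nabla_{\zeta}Y-\nabla_{Y}\zeta$. Substituting these three expressions into the starting identity, the terms $g(\nabla_{\zeta}X,Y)$ and $g(X,\nabla_{\zeta}Y)$ cancel against the corresponding pieces coming from the brackets, leaving exactly
$$(\mathcal{L}_{\zeta}g)(X,Y)=g(\nabla_{X}\zeta,Y)+g(\nabla_{Y}\zeta,X),$$
which is the desired formula (4).

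This argument is entirely formal and local, so it does not use any special feature of the warped or multiply warped structure; it holds on any pseudo-Riemannian manifold with its Levi-Civita connection. I do not anticipate a genuine obstacle here: the only point requiring care is bookkeeping, i.e. making sure the $\nabla_{\zeta}X$ and $\nabla_{\zeta}Y$ contributions cancel cleanly and that no torsion terms survive, which is precisely where the symmetry (torsion-freeness) of $\nabla$ is essential. I would present the substitution and cancellation in a single short display rather than grinding through it term by term.
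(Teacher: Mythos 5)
Your proof is correct: the cancellation of the $g(\nabla_{\zeta}X,Y)$ and $g(X,\nabla_{\zeta}Y)$ terms works exactly as you describe, using only metric compatibility and torsion-freeness. The paper states this lemma without proof, and your argument is precisely the standard derivation one would supply, so there is nothing to add or correct.
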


\begin{Definition}
Let $(M,g)$ be a pseudo-Riemannian manifold, $\zeta\in\Gamma(TM),\,\mathcal{L}$ is the Lie derivative on $M$ with respect to $\zeta,$ then $\zeta$ is called {\bfseries a Killing vector field} if
$$\mathcal{L}_{\zeta}g=0  \eqno{(5)} $$
\end{Definition}

Now we define the semi-symmetric metric Lie derivative as follows:
\begin{Definition}
Let $M$ be a pseudo-Riemannian manifold, $\zeta\in\Gamma(TM),\,\overline{\mathcal{L}}$ is called {\bfseries semi-symmetric metric Lie derivative} on $M$ with respect to $\zeta,$ if it satisfied:
$$(\overline{\mathcal{L}}_{\zeta}g)(X,Y)=g(\overline\nabla_{X}\zeta,Y)+g(\overline\nabla_{Y}\zeta,X)  \eqno{(6)} $$
for any vector fields $X,Y\in\Gamma(TM).$
\end{Definition}

\begin{Definition}
Let $(M,g)$ be a pseudo-Riemannian manifold, $\zeta\in\Gamma(TM),\,\overline{\mathcal{L}}$ is the semi-symmetric metric Lie derivative on $M$ with respect to $\zeta,$ then $\zeta$ is called {\bfseries a semi-symmetric metric Killing vector field} if
$$\overline{\mathcal{L}}_{\zeta}g=0  \eqno{(7)} $$
\end{Definition}

Using the symmetry in the equation $(4),$ we can get:
\begin{Lemma}
If $(M,g,\nabla)$ is a pseudo-Riemannian manifold, then $\zeta\in\Gamma(TM)$ is a Killing vector field if and only if:
$$g(\nabla_{X}\zeta,X)=0      \eqno{(8)}$$
\end{Lemma}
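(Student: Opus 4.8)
The plan is to prove the equivalence in Lemma (8) by exploiting the symmetry built into the defining formula (4) for the Lie derivative. The forward direction is immediate: if $\zeta$ is Killing, then $(\mathcal{L}_\zeta g)(X,Y)=0$ for all $X,Y$, so in particular setting $Y=X$ gives $(\mathcal{L}_\zeta g)(X,X)=2g(\nabla_X\zeta,X)=0$, whence $g(\nabla_X\zeta,X)=0$. The only substantive work is the converse.

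For the converse, I would assume $g(\nabla_X\zeta,X)=0$ for every $X\in\Gamma(TM)$ and aim to recover the full vanishing of $\mathcal{L}_\zeta g$. The natural device is \emph{polarization}: replace $X$ by $X+Y$ in the hypothesis. Expanding $g(\nabla_{X+Y}\zeta,\,X+Y)=0$ and using bilinearity of $g$ together with the $\mathbb{R}$-linearity of the connection in its subscript argument, the diagonal terms $g(\nabla_X\zeta,X)$ and $g(\nabla_Y\zeta,Y)$ vanish by hypothesis, leaving the cross terms $g(\nabla_X\zeta,Y)+g(\nabla_Y\zeta,X)=0$. By formula (4) this cross-term sum is exactly $(\mathcal{L}_\zeta g)(X,Y)$, so it vanishes for all $X,Y$, giving $\mathcal{L}_\zeta g=0$, i.e. $\zeta$ is Killing.

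I expect the main (and only real) subtlety to be justifying that the substitution $X\mapsto X+Y$ is legitimate pointwise: the hypothesis $g(\nabla_X\zeta,X)=0$ should be read as holding for arbitrary vector fields, or equivalently at each point for arbitrary tangent vectors, so that $X+Y$ is again an admissible argument. Since $\nabla$ is $C^\infty(M)$-linear in the lower slot and $g$ is tensorial, the expansion is purely algebraic and the identity is genuinely pointwise, so no issues with derivatives of coefficient functions arise. Everything reduces to the elementary fact that a bilinear form is determined by its associated quadratic form, which is precisely the polarization identity; I would simply carry out this one-line polarization and invoke (4) to conclude.
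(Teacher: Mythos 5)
Your proof is correct and follows essentially the same route as the paper, which simply invokes ``the symmetry in the equation (4)'' --- i.e.\ the polarization identity for the symmetric bilinear form $(\mathcal{L}_{\zeta}g)(X,Y)=g(\nabla_{X}\zeta,Y)+g(\nabla_{Y}\zeta,X)$. Your write-up just makes explicit the substitution $X\mapsto X+Y$ and the tensoriality that the paper leaves implicit.
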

Using the symmetry in the equation $(6),$ we can get:
\begin{Lemma}
If $(M,g,\overline{\nabla})$ is a pseudo-Riemannian manifold, then $\zeta\in\Gamma(TM)$ is a semi-symmetric metric Killing vector field if and only if:
$$g(\overline\nabla_{X}\zeta,X)=0      \eqno{(9)}$$
\end{Lemma}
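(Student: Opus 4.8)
The plan is to prove the biconditional by exploiting the symmetry already encoded in equation $(6)$, exactly as the preceding Lemma does for the Levi-Civita case. The statement to establish is that $\zeta$ is a semi-symmetric metric Killing vector field (that is, $\overline{\mathcal{L}}_{\zeta}g = 0$) precisely when $g(\overline\nabla_{X}\zeta, X) = 0$ for every $X \in \Gamma(TM)$. The key observation is that the right-hand side of $(6)$ is manifestly symmetric in $X$ and $Y$, so $(\overline{\mathcal{L}}_{\zeta}g)(X,Y)$ is a symmetric bilinear form, and a symmetric bilinear form vanishes identically if and only if its associated quadratic form vanishes.

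The forward direction is immediate: if $\overline{\mathcal{L}}_{\zeta}g = 0$, then in particular $(\overline{\mathcal{L}}_{\zeta}g)(X,X) = 0$ for all $X$, and setting $Y = X$ in $(6)$ gives $0 = 2\,g(\overline\nabla_{X}\zeta, X)$, whence $(9)$ holds. For the converse, I would assume $(9)$ holds for all vector fields and recover the vanishing of the full bilinear form by polarization. Concretely, replacing $X$ by $X + Y$ in $(9)$ and expanding using bilinearity of $g$ and the additivity of $\overline\nabla$ in its subscript argument yields
$$
0 = g(\overline\nabla_{X+Y}\zeta,\, X+Y) = g(\overline\nabla_{X}\zeta, X) + g(\overline\nabla_{Y}\zeta, Y) + g(\overline\nabla_{X}\zeta, Y) + g(\overline\nabla_{Y}\zeta, X).
$$
The first two terms vanish by hypothesis $(9)$, leaving $g(\overline\nabla_{X}\zeta, Y) + g(\overline\nabla_{Y}\zeta, X) = 0$, which is precisely $(\overline{\mathcal{L}}_{\zeta}g)(X,Y) = 0$ by the defining equation $(6)$. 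Since $X, Y$ were arbitrary, $\overline{\mathcal{L}}_{\zeta}g = 0$.

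There is essentially no hard step here: the proof is a one-line polarization argument, and the entire content is that the defining formula $(6)$ produces a symmetric tensor. The only point demanding a moment's care is the linearity of $\overline\nabla_{X}\zeta$ in the slot $X$, which I would note follows from $\overline\nabla$ being a linear connection (additive and $C^\infty(M)$-linear in its first argument); this justifies the expansion $\overline\nabla_{X+Y}\zeta = \overline\nabla_{X}\zeta + \overline\nabla_{Y}\zeta$ used above. Given this, the argument is identical in form to the Levi-Civita version, the substitution $\nabla \mapsto \overline\nabla$ being the only change, so no property specific to the semi-symmetric metric connection is actually required.
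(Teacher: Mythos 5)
Your proof is correct and is exactly the argument the paper intends: the paper proves nothing beyond the remark ``Using the symmetry in the equation $(6)$,'' and your polarization of $g(\overline\nabla_{X+Y}\zeta,X+Y)$ together with the specialization $Y=X$ is the standard way to fill in that symmetry argument. No further comment is needed.
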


\newtheorem{Remark}[Definition]{Remark}
\begin{Remark}
The relationship between semi-symmetric metric Killing vector field and Killing vector field:
$$g(\overline\nabla_{X}\zeta,X)=g\big(\nabla_{X}\zeta+\pi(\zeta)X-g(X,\zeta)P,X\big)
=g(\nabla_{X}\zeta,X)+\pi(\zeta)\|X\|^2-\pi(X)g(X,\zeta).$$
So when $\pi(\zeta)\|X\|^2=\pi(X)g(X,\zeta),$ we have
$g(\overline\nabla_{X}\zeta,X)=0 \Leftrightarrow g(\nabla_{X}\zeta,X)=0.$ So we get:
\end{Remark}

\newtheorem{Proposition}[Definition]{Proposition}
\begin{Proposition}
Let $(M,g,\overline{\nabla})$ be a pseudo-Riemannian manifold, $\zeta\in\Gamma(TM),$ if $\pi(\zeta)\|X\|^2=\pi(X)g(X,\zeta),$ then the following two conditions are equivalent:\\
$(1)\,\zeta$ is a semi-symmetric metric Killing vector field on $M$.\\
$(2)\,\zeta$ is a Killing vector field on $M$.\\
In particular, if $P=0,$ then the above two conditions are equivalent.
\end{Proposition}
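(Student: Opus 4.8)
The plan is to reduce each of the two Killing conditions to its pointwise scalar characterization and then compare them term by term. By the characterization in equation~(9), condition~(1) holds precisely when $g(\overline\nabla_{X}\zeta,X)=0$ for every $X\in\Gamma(TM)$, while by equation~(8), condition~(2) holds precisely when $g(\nabla_{X}\zeta,X)=0$ for every $X\in\Gamma(TM)$. So the whole matter comes down to relating the two scalar quantities $g(\overline\nabla_{X}\zeta,X)$ and $g(\nabla_{X}\zeta,X)$, and the bridge between them has in fact already been computed in the Remark immediately preceding this Proposition.

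Concretely, I would substitute the defining formula~(3) for the semi-symmetric metric connection into $g(\overline\nabla_{X}\zeta,X)$ and expand, exactly as in the Remark, to obtain
$$g(\overline\nabla_{X}\zeta,X)=g(\nabla_{X}\zeta,X)+\pi(\zeta)\|X\|^2-\pi(X)g(X,\zeta).$$
Under the standing hypothesis $\pi(\zeta)\|X\|^2=\pi(X)g(X,\zeta)$, read as an identity in $X$, the last two terms cancel, leaving the pointwise identity $g(\overline\nabla_{X}\zeta,X)=g(\nabla_{X}\zeta,X)$ for all $X\in\Gamma(TM)$.

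Since this identity holds at every $X$, the map $X\mapsto g(\overline\nabla_{X}\zeta,X)$ vanishes identically if and only if $X\mapsto g(\nabla_{X}\zeta,X)$ does. Feeding this equivalence back through the characterizations~(9) and~(8) gives at once $(1)\Leftrightarrow(2)$, which is the asserted equivalence. For the special case $P=0$, I would observe that then $\pi(X)=g(X,P)=0$ for every $X$, so both sides of the hypothesis vanish and it is automatically satisfied; in fact~(3) collapses to $\overline\nabla=\nabla$, whence the two connections — and therefore the two notions of Killing field — literally coincide.

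There is no genuinely hard step here, as the essential computation is already supplied by the Remark. The only point that requires care is the quantifier: the hypothesis must be interpreted as holding for \emph{all} $X$, not for a single fixed vector field, since the equivalence of the two ``for all $X$'' conditions in~(8) and~(9) relies on the cancellation occurring at every $X$ simultaneously.
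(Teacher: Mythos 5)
Your proposal is correct and follows exactly the route the paper itself takes: the expansion of $g(\overline\nabla_{X}\zeta,X)$ via formula~(3) is precisely the content of the Remark preceding the Proposition, and the conclusion is drawn from the characterizations~(8) and~(9) in the same way. Your added care about the quantifier over $X$ and the observation that $P=0$ forces $\overline\nabla=\nabla$ are both sound and slightly more explicit than the paper.
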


\begin{Remark}
It is easy to see that when $P=0,$ the semi-symmetric metric Lie derivative becomes to Lie derivative, and the semi-symmetric metric Killing vector field becomes to Killing vector field.
\end{Remark}

\newtheorem{Example}[Definition]{Example}
\begin{Example}
The semi-symmetric metric Killing vector field on $I,$ where $I$ is an open interval in $\mathbb{R},$ and
$g_{I}=\mathrm{d}t^2.$ Suppose $\zeta,X\in\Gamma(TI),$ then we have $\zeta=u\partial t,\,X=v\partial t,$ where
$u\neq0,\,v\neq0,$ then:
$$\pi(\zeta)\|X\|^2=g(P,\zeta)g(X,X)=g(P,u\partial t)g(v\partial t,v\partial t)=uv^2g(P,\partial t),$$
$$\pi(X)g(X,\zeta)=g(P,X)g(X,\zeta)=g(P,v\partial t)g(u\partial t,v\partial t)=uv^2g(P,\partial t),$$
which means $\pi(\zeta)\|X\|^2=\pi(X)g(X,\zeta),$ so by Proposition 3.10, we have the semi-symmetric metric Killing vector field on $I$ is equivalent to the Killing vector field on $I,$ and
$$g(\overline\nabla_{X}\zeta,X)=g(\nabla_{X}\zeta,X)=g(\nabla_{v\partial t}u\partial t,v\partial t)
=g(v\nabla_{\partial t}u\partial t,v\partial t)=g(v\dot{u}\partial t,v\partial t)=\dot{u}v^2=0,$$
since $v\neq0,$ we have $\dot{u}=0,$ so $u=a\in\mathbb{R}\setminus\{0\}.$ \par
In a word, the semi-symmetric metric Killing vector field on $I$ and the Killing vector field on $I$ has the form $a\partial t,$ where $a\in\mathbb{R}\setminus\{0\}.$
\end{Example}

By Lemma 3.1 and straightforward computation, we can get:
\begin{Proposition}
Let $M=M_{1}\times _{f}M_{2}$ be a warped product with $\overline\nabla,\,\zeta\in\Gamma(TM),\,P\in\Gamma(TM_{1})$. Then for any vector fields $X,Y\in\Gamma(TM),$ we have:
\begin{eqnarray} \setcounter{equation}{10}
(\overline{\mathcal{L}}_{\zeta}g)(X,Y)&=&(\overline{\mathcal{L}}^{1}_{\zeta_{1}}g_{1})(X_{1},Y_{1})
+f^{2}(\mathcal{L}^{2}_{\zeta_{2}}g_{1})(X_{2},Y_{2})+2[f\zeta_{1}(f)+f^{2}\pi(\zeta_{1})]g_{2}(X_{2},Y_{2}) \nonumber \\
& &-f^{2}g_{2}(X_{2},\zeta_{2})\pi(Y_{1})-f^{2}g_{2}(Y_{2},\zeta_{2})\pi(X_{1})
\end{eqnarray}
\end{Proposition}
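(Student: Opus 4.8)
The plan is to reduce everything to the defining formula (6) for the semi-symmetric metric Lie derivative and then apply Lemma 3.1 (the case $P\in\Gamma(TM_{1})$) term by term. First I would decompose the three vector fields along the two factors, writing $X=X_{1}+X_{2}$, $Y=Y_{1}+Y_{2}$ and $\zeta=\zeta_{1}+\zeta_{2}$ with $X_{i},Y_{i},\zeta_{i}\in\Gamma(TM_{i})$, and record the two structural facts about the warped metric used throughout: the two factors are $g$-orthogonal, so only like-with-like pairings survive, while $g(X_{1},Y_{1})=g_{1}(X_{1},Y_{1})$ and $g(X_{2},Y_{2})=f^{2}g_{2}(X_{2},Y_{2})$. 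I would also note $g_{1}(\mathrm{grad}f,Y_{1})=Y_{1}(f)$ and $\pi(Y_{1})=g(P,Y_{1})=g_{1}(P,Y_{1})$, which convert the $M_{1}$-valued outputs of Lemma 3.1 into scalars once paired.

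Next, by bilinearity of $\overline\nabla$ I would expand $\overline\nabla_{X}\zeta$ into the four pieces $\overline\nabla_{X_{1}}\zeta_{1}$, $\overline\nabla_{X_{1}}\zeta_{2}$, $\overline\nabla_{X_{2}}\zeta_{1}$, $\overline\nabla_{X_{2}}\zeta_{2}$ and substitute the four identities of Lemma 3.1. This gives one $M_{1}$-valued term $\overline\nabla^{1}_{X_{1}}\zeta_{1}$, the warping terms $\tfrac{X_{1}f}{f}\zeta_{2}$ and $\big[\tfrac{\zeta_{1}f}{f}+\pi(\zeta_{1})\big]X_{2}$, and the fiber term $-fg_{2}(X_{2},\zeta_{2})\mathrm{grad}f+\nabla^{2}_{X_{2}}\zeta_{2}-f^{2}g_{2}(X_{2},\zeta_{2})P$. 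Pairing each against $Y=Y_{1}+Y_{2}$ and discarding the orthogonal cross-pairings expresses $g(\overline\nabla_{X}\zeta,Y)$ as a sum of six scalar terms: the $M_{1}$ connection term $g_{1}(\overline\nabla^{1}_{X_{1}}\zeta_{1},Y_{1})$; the warping terms $f(X_{1}f)g_{2}(\zeta_{2},Y_{2})$ and $[f\zeta_{1}(f)+f^{2}\pi(\zeta_{1})]g_{2}(X_{2},Y_{2})$; and the three fiber terms $-fg_{2}(X_{2},\zeta_{2})(Y_{1}f)$, $f^{2}g_{2}(\nabla^{2}_{X_{2}}\zeta_{2},Y_{2})$ and $-f^{2}g_{2}(X_{2},\zeta_{2})\pi(Y_{1})$.

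Finally I would symmetrize, adding the same expression with $X$ and $Y$ interchanged, and collect. The two $M_{1}$ connection terms assemble into $(\overline{\mathcal{L}}^{1}_{\zeta_{1}}g_{1})(X_{1},Y_{1})$ via (6) applied on $M_{1}$, and the two Levi-Civita fiber terms into $f^{2}(\mathcal{L}^{2}_{\zeta_{2}}g_{2})(X_{2},Y_{2})$ via (4) applied on $M_{2}$; the symmetric warping term doubles to $2[f\zeta_{1}(f)+f^{2}\pi(\zeta_{1})]g_{2}(X_{2},Y_{2})$, and the two $P$-terms give the final line. I expect the one genuinely delicate point to be the mixed terms $f(X_{1}f)g_{2}(\zeta_{2},Y_{2})$ and $-fg_{2}(X_{2},\zeta_{2})(Y_{1}f)$: after symmetrization these match their $X\leftrightarrow Y$ counterparts and cancel identically, which is precisely what leaves the clean five-term formula. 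The rest is careful bookkeeping, the main recurring caution being to carry the factor $f^{2}$ whenever two fiber vectors are paired under $g$.
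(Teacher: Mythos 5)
Your computation is correct and is precisely the ``straightforward computation'' from Lemma 3.1 that the paper invokes without writing out: the four-fold decomposition of $\overline\nabla_{X}\zeta$, the cancellation of the $\mathrm{grad}f$ terms against the warping terms after symmetrization, and the assembly of the $M_{1}$-terms into $\overline{\mathcal{L}}^{1}$ (semi-symmetric, since $P\in\Gamma(TM_{1})$) and the fiber terms into $\mathcal{L}^{2}$ all match the intended argument. Note that your derivation produces $f^{2}(\mathcal{L}^{2}_{\zeta_{2}}g_{2})(X_{2},Y_{2})$, which confirms that the $g_{1}$ appearing in the second term of the paper's displayed formula is a typo for $g_{2}$.
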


By Lemma 3.2 and straightforward computation, we can get:
\begin{Proposition}
Let $M=M_{1}\times _{f}M_{2}$ be a warped product with $\overline\nabla,\,\zeta\in\Gamma(TM),\,P\in\Gamma(TM_{2})$. Then for any vector fields $X,Y\in\Gamma(TM),$ we have:
\begin{eqnarray}
(\overline{\mathcal{L}}_{\zeta}g)(X,Y)&=&(\mathcal{L}^{1}_{\zeta_{1}}g_{1})(X_{1},Y_{1})
+f^{2}(\mathcal{L}^{2}_{\zeta_{2}}g_{2})(X_{2},Y_{2})+2\pi(\zeta_{2})g(X,Y)\nonumber \\
& &+2f\zeta_{1}(f)g_{2}(X_{2},Y_{2})-\pi(Y_{2})g(X,\zeta)-\pi(X_{2})g(Y,\zeta)
\end{eqnarray}
\end{Proposition}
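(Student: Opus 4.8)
The plan is to evaluate the defining expression $(6)$, namely $(\overline{\mathcal{L}}_{\zeta}g)(X,Y)=g(\overline\nabla_{X}\zeta,Y)+g(\overline\nabla_{Y}\zeta,X)$, by resolving every field into its base and fiber parts. Writing $X=X_{1}+X_{2}$, $Y=Y_{1}+Y_{2}$ and $\zeta=\zeta_{1}+\zeta_{2}$ with subscript $1$ for the $\Gamma(TM_{1})$-component and subscript $2$ for the $\Gamma(TM_{2})$-component, the additivity of $\overline\nabla$ in both slots gives
$$\overline\nabla_{X}\zeta=\overline\nabla_{X_{1}}\zeta_{1}+\overline\nabla_{X_{1}}\zeta_{2}+\overline\nabla_{X_{2}}\zeta_{1}+\overline\nabla_{X_{2}}\zeta_{2}.$$
First I would substitute the four formulas of Lemma 3.2 (the case $P\in\Gamma(TM_{2})$) into these four terms, applying part $(3)$ with the roles of $X_{1}$ and $Y_{2}$ played by $\zeta_{1}$ and $X_{2}$ to handle $\overline\nabla_{X_{2}}\zeta_{1}$.

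Next I would pair the result against $Y$, exploiting that the warped metric splits orthogonally as $g=g_{1}\oplus f^{2}g_{2}$, so that $g(A,B)=g_{1}(A_{1},B_{1})+f^{2}g_{2}(A_{2},B_{2})$ and all mixed pairings vanish. Two bookkeeping identities carry the calculation: since $P\in\Gamma(TM_{2})$ one has $\pi(Y_{2})=g(Y_{2},P)=f^{2}g_{2}(Y_{2},P)$, which converts every stray $P$-contribution into a $\pi$-term, and $g_{1}(\mathrm{grad}f,Y_{1})=Y_{1}f$, which converts the $\mathrm{grad}f$-contribution into a derivative. Carried out term by term, $g(\overline\nabla_{X}\zeta,Y)$ becomes a base derivative term $g_{1}(\nabla^{1}_{X_{1}}\zeta_{1},Y_{1})$, a fiber derivative term $f^{2}g_{2}(\nabla^{2}_{X_{2}}\zeta_{2},Y_{2})$, two $\pi(\zeta_{2})$-terms (one from each factor), an $f(\zeta_{1}f)$-term, and a handful of asymmetric pieces built from $\mathrm{grad}f$ and $P$.

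I would then symmetrize by adding $g(\overline\nabla_{Y}\zeta,X)$, obtained by interchanging $X\leftrightarrow Y$ while keeping $\zeta$ fixed. The two derivative terms now assemble, via Lemma 3.3 applied on each factor, into $(\mathcal{L}^{1}_{\zeta_{1}}g_{1})(X_{1},Y_{1})$ and $f^{2}(\mathcal{L}^{2}_{\zeta_{2}}g_{2})(X_{2},Y_{2})$. Using $g(X,Y)=g_{1}(X_{1},Y_{1})+f^{2}g_{2}(X_{2},Y_{2})$, the two $\pi(\zeta_{2})$-terms combine into $2\pi(\zeta_{2})g(X,Y)$, while the base-warping terms give $2f\zeta_{1}(f)g_{2}(X_{2},Y_{2})$.

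The one genuinely delicate step will be tracking the asymmetric leftovers. The $\mathrm{grad}f$-piece of $\overline\nabla_{X_{2}}\zeta_{2}$ must cancel, after symmetrization and using the symmetry of $g_{2}$, against the $\pi(\zeta_{2})$-free part $\tfrac{X_{1}f}{f}\zeta_{2}$ of $\overline\nabla_{X_{1}}\zeta_{2}$; this leaves exactly four $P$-terms. Grouping these by $\pi(Y_{2})$ and $\pi(X_{2})$ and invoking $g(X,\zeta)=g_{1}(X_{1},\zeta_{1})+f^{2}g_{2}(X_{2},\zeta_{2})$ collapses them precisely into $-\pi(Y_{2})g(X,\zeta)-\pi(X_{2})g(Y,\zeta)$, which yields the claimed identity. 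The computation runs entirely parallel to that of the preceding Proposition for $P\in\Gamma(TM_{1})$; the only new features are the extra $\pi$-terms in parts $(1)$, $(2)$ and $(4)$ of Lemma 3.2, and these are exactly what produce the $2\pi(\zeta_{2})g(X,Y)$ term together with the two $g(\cdot,\zeta)$-terms that are absent in the $P\in\Gamma(TM_{1})$ case.
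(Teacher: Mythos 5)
Your proposal is correct and follows exactly the route the paper intends: the paper's own justification is simply ``By Lemma 3.2 and straightforward computation,'' and your term-by-term expansion of $g(\overline\nabla_{X}\zeta,Y)+g(\overline\nabla_{Y}\zeta,X)$ via Lemma 3.2, the orthogonal splitting $g=g_{1}\oplus f^{2}g_{2}$, and the symmetrization is precisely that computation, with the cancellation of the $\mathrm{grad}f$ terms against the $\tfrac{X_{1}f}{f}\zeta_{2}$ terms and the collapse of the four remaining $P$-terms into $-\pi(Y_{2})g(X,\zeta)-\pi(X_{2})g(Y,\zeta)$ correctly identified. No gaps.
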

Using the equation $(6)$ and Proposition 3.13, we have:
\newtheorem{Corollary}[Definition]{Corollary}
\begin{Corollary}
Let $M=M_{1}\times _{f}M_{2}$ be a warped product with $\overline\nabla,\,\zeta\in\Gamma(TM),\,P\in\Gamma(TM_{1})$. Then for any vector field $X\in\Gamma(TM),$ we have:
\begin{eqnarray}
g(\overline\nabla_{X}\zeta,X)&=&g_{1}(\overline\nabla^1_{X_{1}}\zeta_{1},X_{1})
+f^{2}g_{2}(\nabla^2_{X_{2}}\zeta_{2},X_{2}) \nonumber \\
&-&f^{2}\pi(X_{1})g_{2}(X_{2},\zeta_{2})
+\big[f\zeta_{1}(f)+f^{2}\pi(\zeta_{1})\big]\|X_{2}\|_{2}^2.
\end{eqnarray}
\end{Corollary}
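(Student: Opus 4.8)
The plan is to specialize to the diagonal $Y=X$ the Proposition above in which $P\in\Gamma(TM_{1})$, and then to convert each Lie-derivative expression into an inner product of a covariant derivative by means of the symmetry identities already recorded in equations $(4)$ and $(6)$. Concretely, I would first set $Y=X$ in the formula of that Proposition. On the diagonal the two cross terms $-f^{2}g_{2}(X_{2},\zeta_{2})\pi(Y_{1})$ and $-f^{2}g_{2}(Y_{2},\zeta_{2})\pi(X_{1})$ coincide and combine into $-2f^{2}\pi(X_{1})g_{2}(X_{2},\zeta_{2})$, while $g_{2}(X_{2},X_{2})$ becomes $\|X_{2}\|_{2}^{2}$. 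This produces an expression for $(\overline{\mathcal{L}}_{\zeta}g)(X,X)$ in which every remaining term is either a diagonal Lie derivative on one of the factors or a scalar multiple of $\|X_{2}\|_{2}^{2}$.

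The second step is to read off the symmetry identities on the three relevant manifolds. Applied to $(M,g,\overline\nabla)$ with $Y=X$, equation $(6)$ gives $(\overline{\mathcal{L}}_{\zeta}g)(X,X)=2g(\overline\nabla_{X}\zeta,X)$; applied to the base $(M_{1},g_{1},\overline\nabla^{1})$ it gives $(\overline{\mathcal{L}}^{1}_{\zeta_{1}}g_{1})(X_{1},X_{1})=2g_{1}(\overline\nabla^{1}_{X_{1}}\zeta_{1},X_{1})$. For the fiber term I would instead invoke equation $(4)$, the ordinary Lie-derivative identity, on $(M_{2},g_{2},\nabla^{2})$ to obtain $(\mathcal{L}^{2}_{\zeta_{2}}g_{2})(X_{2},X_{2})=2g_{2}(\nabla^{2}_{X_{2}}\zeta_{2},X_{2})$. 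The reason the fiber uses the Levi-Civita identity rather than the semi-symmetric one is that $P\in\Gamma(TM_{1})$, so the semi-symmetric correction terms in $\overline\nabla$ contribute nothing to the purely $M_{2}$-tangential part; this is already encoded in the Proposition, where the fiber Lie derivative carries the plain symbol $\mathcal{L}^{2}$ rather than $\overline{\mathcal{L}}^{2}$.

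Substituting these three identities into the diagonal formula and dividing through by $2$ then yields exactly the claimed expression for $g(\overline\nabla_{X}\zeta,X)$. I do not expect any genuine obstacle here: the statement is a direct corollary, and the entire argument is a specialization to $X=Y$ followed by the polarization-type identities $(4)$ and $(6)$. The only point requiring a little care is the bookkeeping in the first step---tracking the factor of $2$ that arises when the two symmetric cross terms merge, and staying consistent about which of equations $(4)$ and $(6)$ governs each factor---but no computation beyond what the Proposition already supplies is needed.
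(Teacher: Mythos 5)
Your proposal is correct and follows exactly the route the paper intends: the corollary is stated as an immediate consequence of equation $(6)$ and Proposition 3.13, obtained by setting $Y=X$, merging the two symmetric cross terms, converting the diagonal Lie-derivative terms via $(6)$ on $M$ and $M_{1}$ and via $(4)$ on $M_{2}$, and dividing by $2$. No gaps; your bookkeeping of the factor of $2$ and of which identity governs which factor matches the paper.
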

Using the equation $(6)$ and Proposition 3.14, we have:
\begin{Corollary}
Let $M=M_{1}\times _{f}M_{2}$ be a warped product with $\overline\nabla,\,\zeta\in\Gamma(TM),\,P\in\Gamma(TM_{2})$. Then for any vector field $X\in\Gamma(TM),$ we have:
\begin{eqnarray}
g(\overline\nabla_{X}\zeta,X)&=&g_{1}(\nabla^1_{X_{1}}\zeta_{1},X_{1})
+f^{2}g_{2}(\nabla^2_{X_{2}}\zeta_{2},X_{2})+\pi(\zeta_{2})\|X\|^2  \nonumber \\
&+&f\zeta_{1}(f)\|X_{2}\|_{2}^2-\pi(X_{2})g(X,\zeta).
\end{eqnarray}
\end{Corollary}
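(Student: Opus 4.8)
The plan is to obtain this identity by specializing the immediately preceding Proposition to the diagonal $Y=X$ and then converting the Lie-derivative terms into covariant-derivative terms via Lemma~3.3. First I would observe that the defining relation $(6)$ for the semi-symmetric metric Lie derivative is symmetric in its two arguments, so setting $Y=X$ gives
$$(\overline{\mathcal{L}}_{\zeta}g)(X,X)=2\,g(\overline\nabla_{X}\zeta,X).$$
Thus the quantity I want to compute, $g(\overline\nabla_{X}\zeta,X)$, is exactly half of $(\overline{\mathcal{L}}_{\zeta}g)(X,X)$, and it suffices to evaluate the Proposition's formula on the diagonal and halve it.

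Next I would substitute $Y=X$ into the formula of the previous Proposition (the case $P\in\Gamma(TM_{2})$). The manifestly symmetric terms specialize directly: the two Lie-derivative terms become $(\mathcal{L}^{1}_{\zeta_{1}}g_{1})(X_{1},X_{1})$ and $f^{2}(\mathcal{L}^{2}_{\zeta_{2}}g_{2})(X_{2},X_{2})$, while $2\pi(\zeta_{2})g(X,Y)$ and $2f\zeta_{1}(f)g_{2}(X_{2},Y_{2})$ become $2\pi(\zeta_{2})\|X\|^2$ and $2f\zeta_{1}(f)\|X_{2}\|_{2}^2$ respectively. The only step requiring a little care is the pair of asymmetric cross terms $-\pi(Y_{2})g(X,\zeta)-\pi(X_{2})g(Y,\zeta)$: on the diagonal these two terms coincide and combine into $-2\pi(X_{2})g(X,\zeta)$.

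I would then apply Lemma~3.3, equation $(4)$, separately on each factor with the respective Levi-Civita connections $\nabla^{1}$ and $\nabla^{2}$, to rewrite
$$(\mathcal{L}^{1}_{\zeta_{1}}g_{1})(X_{1},X_{1})=2\,g_{1}(\nabla^{1}_{X_{1}}\zeta_{1},X_{1}),\qquad (\mathcal{L}^{2}_{\zeta_{2}}g_{2})(X_{2},X_{2})=2\,g_{2}(\nabla^{2}_{X_{2}}\zeta_{2},X_{2}).$$
Collecting all contributions to $(\overline{\mathcal{L}}_{\zeta}g)(X,X)$ and dividing by $2$ then yields precisely the claimed identity.

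I do not expect any genuine obstacle here: the argument is a one-line diagonal specialization of the Proposition followed by routine bookkeeping. The two places to stay alert are the correct symmetrization of the asymmetric cross terms (so as to land on a single $-\pi(X_{2})g(X,\zeta)$ after halving) and the consistent factor-of-two accounting when passing from $(\overline{\mathcal{L}}_{\zeta}g)(X,X)$ back to $g(\overline\nabla_{X}\zeta,X)$.
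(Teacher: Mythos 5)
Your proposal is correct and follows exactly the route the paper intends: specialize Proposition 3.14 to $Y=X$, use the symmetry of equation $(6)$ to write $(\overline{\mathcal{L}}_{\zeta}g)(X,X)=2\,g(\overline\nabla_{X}\zeta,X)$, convert the factor Lie-derivative terms via equation $(4)$, and halve. The paper gives no further detail ("Using the equation $(6)$ and Proposition 3.14, we have..."), so your write-up simply makes the same computation explicit.
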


By Corollary 3.15, we can easily get:
\begin{Proposition}
Let $M=M_{1}\times _{f}M_{2}$ be a warped product with $\overline\nabla,\,P\in\Gamma(TM_{1}),\,\zeta=\zeta_{1}+\zeta_{2}\in\Gamma(TM)$. Then $\zeta$ is a semi-symmetric metric Killing vector field if one of the following conditions holds:\\
$(1)\,\zeta=\zeta_{1},\,\zeta_{1}$ is a semi-symmetric metric Killing vector field, and $\zeta_{1}(f)+f\pi(\zeta_{1})=0.$\\
$(2)\,\zeta=\zeta_{2},\,\zeta_{2}$ is a  Killing vector field and $\pi(X_{1})g_{2}(X_{2},\zeta_{2})=0.$\\
$(3)\,\zeta=\zeta_{1}+\zeta_{2},\,\zeta_{1}$ is a semi-symmetric metric Killing vector field, $\zeta_{2}$ is a  Killing vector field, and \\ \mbox{}\quad\, $\zeta_{1}(f)+f\pi(\zeta_{1})=0,\,\pi(X_{1})g_{2}(X_{2},\zeta_{2})=0.$
\end{Proposition}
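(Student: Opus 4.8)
The plan is to reduce the statement to the pointwise criterion of Lemma 3.8: $\zeta$ is a semi-symmetric metric Killing field on $M$ precisely when $g(\overline\nabla_X\zeta, X) = 0$ for every $X \in \Gamma(TM)$. Writing $X = X_1 + X_2$, Corollary 3.15 already expands $g(\overline\nabla_X\zeta, X)$ as a sum of four pieces: a horizontal term $g_1(\overline\nabla^1_{X_1}\zeta_1, X_1)$, a vertical term $f^2 g_2(\nabla^2_{X_2}\zeta_2, X_2)$, a mixed term $-f^2\pi(X_1)g_2(X_2, \zeta_2)$, and a conformal term $\big[f\zeta_1(f) + f^2\pi(\zeta_1)\big]\|X_2\|_2^2$. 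In each of the three cases the goal is simply to verify that all four pieces vanish identically.

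For case (1) I would set $\zeta_2 = 0$, which kills the vertical and mixed terms outright. The horizontal term vanishes by Lemma 3.8 applied on the base $(M_1, g_1, \overline\nabla^1)$, since $\zeta_1$ is assumed semi-symmetric metric Killing there and $X_1$ ranges over all of $\Gamma(TM_1)$; the conformal term factors as $f\big[\zeta_1(f) + f\pi(\zeta_1)\big]\|X_2\|_2^2$ and so vanishes under the hypothesis $\zeta_1(f) + f\pi(\zeta_1) = 0$.

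Case (2) is the mirror image: with $\zeta_1 = 0$ the horizontal and conformal terms drop out, the vertical term vanishes by Lemma 3.6 on the fiber $(M_2, g_2)$ because $\zeta_2$ is Killing, and the remaining mixed term is removed precisely by the imposed condition $\pi(X_1)g_2(X_2, \zeta_2) = 0$. Case (3) is the superposition: all four terms are present, but each is annihilated by exactly one of the hypotheses — the horizontal by Lemma 3.8, the vertical by Lemma 3.6, the mixed by $\pi(X_1)g_2(X_2, \zeta_2) = 0$, and the conformal by $\zeta_1(f) + f\pi(\zeta_1) = 0$.

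I do not anticipate a genuine obstacle, as the result is a direct reading of Corollary 3.15. The only points demanding care are the bookkeeping — distinguishing terms that vanish because a component field is zero from those removed by an imposed condition — and correctly routing the factorwise Killing hypotheses through the characterizations of Lemmas 3.6 and 3.8 rather than through the raw definition of the Lie derivative.
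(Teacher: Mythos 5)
Your proposal is correct and follows exactly the route the paper intends: the paper derives this proposition directly from Corollary 3.15 (the expansion of $g(\overline\nabla_X\zeta,X)$ into horizontal, vertical, mixed, and conformal terms) together with the characterizations in Lemmas 3.6 and 3.8, and your case-by-case verification that each term vanishes matches that argument. No gaps.
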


\begin{Proposition}
Let $M=M_{1}\times _{f}M_{2}$ be a warped product with $\overline\nabla,\,P\in\Gamma(TM_{1}),\,\zeta=\zeta_{1}+\zeta_{2}$ is a semi-symmetric metric Killing vector field, then:\\
$(1)\,\zeta_{1}$ is a semi-symmetric metric Killing vector field on $M_{1}.$\\
$(2)\,\zeta_{2}$ is a Killing vector field on $M_{2}$ if $\zeta_{1}(f)+f\pi(\zeta_{1})=0.$
\end{Proposition}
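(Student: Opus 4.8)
The plan is to specialize the pointwise identity of Corollary 3.15 by letting the test vector field $X$ lie entirely in one factor at a time, and then to read off each conclusion from the Killing characterizations in equations $(8)$ and $(9)$.

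First I would note that, since $\zeta$ is assumed to be a semi-symmetric metric Killing vector field, Lemma 3.9 gives $g(\overline\nabla_{X}\zeta,X)=0$ for every $X\in\Gamma(TM)$. Substituting this into Corollary 3.15 yields, for all $X=X_{1}+X_{2}$,
$$g_{1}(\overline\nabla^1_{X_{1}}\zeta_{1},X_{1})+f^{2}g_{2}(\nabla^2_{X_{2}}\zeta_{2},X_{2})-f^{2}\pi(X_{1})g_{2}(X_{2},\zeta_{2})+\big[f\zeta_{1}(f)+f^{2}\pi(\zeta_{1})\big]\|X_{2}\|_{2}^2=0.$$
For part $(1)$ I would take $X=X_{1}\in\Gamma(TM_{1})$, i.e.\ set $X_{2}=0$. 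Every term carrying a factor of $X_{2}$—namely the $f^{2}g_{2}(\nabla^2_{X_{2}}\zeta_{2},X_{2})$ term, the $\pi(X_{1})g_{2}(X_{2},\zeta_{2})$ term, and the $\|X_{2}\|_{2}^2$ term—then vanishes, leaving $g_{1}(\overline\nabla^1_{X_{1}}\zeta_{1},X_{1})=0$ for all $X_{1}$. Applying equation $(9)$ to the triple $(M_{1},g_{1},\overline\nabla^1)$ shows that this is precisely the assertion that $\zeta_{1}$ is a semi-symmetric metric Killing vector field on $M_{1}$.

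For part $(2)$ I would instead take $X=X_{2}\in\Gamma(TM_{2})$, i.e.\ set $X_{1}=0$. Now the first term and the $\pi(X_{1})$-term both drop out, and the identity collapses to
$$f^{2}g_{2}(\nabla^2_{X_{2}}\zeta_{2},X_{2})+\big[f\zeta_{1}(f)+f^{2}\pi(\zeta_{1})\big]\|X_{2}\|_{2}^2=0.$$
At this point I would invoke the hypothesis $\zeta_{1}(f)+f\pi(\zeta_{1})=0$: multiplying it by $f$ gives $f\zeta_{1}(f)+f^{2}\pi(\zeta_{1})=f\big[\zeta_{1}(f)+f\pi(\zeta_{1})\big]=0$, so the bracketed coefficient vanishes and only $f^{2}g_{2}(\nabla^2_{X_{2}}\zeta_{2},X_{2})=0$ remains. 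Since the warping function is strictly positive, $f:M_{1}\to(0,\infty)$, we have $f^{2}\neq0$, whence $g_{2}(\nabla^2_{X_{2}}\zeta_{2},X_{2})=0$ for all $X_{2}$; by equation $(8)$ on $M_{2}$ this means $\zeta_{2}$ is a Killing vector field on $M_{2}$.

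I do not anticipate any serious obstacle here, as the argument is a direct specialization of an already-established identity. The only points requiring care are the bookkeeping—verifying that each discarded term genuinely contains the factor set to zero—and the harmless but necessary observation that the strict positivity of $f$ licenses the cancellation of $f^{2}$ in part $(2)$.
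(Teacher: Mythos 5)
Your proposal is correct and follows exactly the route the paper intends: the paper states this proposition as an immediate consequence of Corollary 3.15, and your specialization of that identity to $X=X_{1}$ and $X=X_{2}$, combined with the characterizations in equations $(8)$ and $(9)$, is precisely that argument. The bookkeeping (each discarded term is multilinear in the component set to zero, and $f>0$ licenses dividing by $f^{2}$) is handled correctly.
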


Now we recall the definition of generalized Robertson-Walker space-times as follow:
\begin{Definition}
A {\bfseries generalized Robertson-Walker space-time} $(M,g)$ is a warped product of the form $M=I\times_{f}M_{2}$ with the metric tensor $g=-\mathrm{d}t^2\oplus f^2g_{2},$ where $f:I\rightarrow (0,\infty)$ is smooth, $I$ is an open interval in $\mathbb{R}.$
\end{Definition}

\begin{Proposition}
Let $M=I\times_{f}M_{2}$ be a generalized Robertson-Walker space-time with $\overline\nabla,\,P=\partial t\in\Gamma(TI),\,\zeta=a\partial t+\zeta_{2}\in\Gamma(TM),\,a\in\mathbb{R}\setminus\{0\}.$ Then for any
$X=u\partial t+X_{2}\in\Gamma(TM),\,u\in\mathbb{R}\setminus\{0\},$ we have:
$\zeta$ is a semi-symmetric metric Killing vector field on $M$ if $\zeta_{2}$ is a Killing vector field on $M_{2}$ and $f=e^t,\,g_{2}(X_{2},\zeta_{2})=0.$
\end{Proposition}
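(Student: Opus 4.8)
The plan is to apply Corollary 3.15 directly, since the hypothesis $P=\partial t\in\Gamma(TI)=\Gamma(TM_{1})$ is exactly the setting of that corollary. Writing $\zeta=\zeta_{1}+\zeta_{2}$ with $\zeta_{1}=a\partial t$ and $X=X_{1}+X_{2}$ with $X_{1}=u\partial t$, formula (14) displays $g(\overline\nabla_{X}\zeta,X)$ as a sum of four terms. By Lemma 3.8, $\zeta$ is a semi-symmetric metric Killing vector field precisely when $g(\overline\nabla_{X}\zeta,X)=0$ for every $X$, so it suffices to show that each of the four terms vanishes under the stated hypotheses.

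Two of the terms vanish immediately. The term $f^{2}g_{2}(\nabla^{2}_{X_{2}}\zeta_{2},X_{2})$ is zero because $\zeta_{2}$ is Killing on $M_{2}$, by the characterization in Lemma 3.7 (equation (8)) applied to $(M_{2},g_{2})$; and the term $-f^{2}\pi(X_{1})g_{2}(X_{2},\zeta_{2})$ is zero by the hypothesis $g_{2}(X_{2},\zeta_{2})=0$. For the first term $g_{1}(\overline\nabla^{1}_{X_{1}}\zeta_{1},X_{1})$, I would invoke Example 3.12, which identifies $a\partial t$ as a semi-symmetric metric Killing field on the interval $I$, so that this term vanishes as well. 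Alternatively one computes it by hand: the Levi-Civita piece $\nabla^{1}_{X_{1}}\zeta_{1}$ vanishes on the flat interval, and the two correction terms $\pi(\zeta_{1})X_{1}=-a\,u\partial t$ and $-g_{1}(X_{1},\zeta_{1})P=a\,u\partial t$ cancel.

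The remaining term is $\big[f\zeta_{1}(f)+f^{2}\pi(\zeta_{1})\big]\|X_{2}\|_{2}^{2}$, and this is where the specific choice $f=e^{t}$ together with the Lorentzian signature is used; I expect this to be the only delicate point, everything else being bookkeeping. One computes $\zeta_{1}(f)=a\,\partial t(e^{t})=ae^{t}=af$ and, crucially, $\pi(\zeta_{1})=g(a\partial t,\partial t)=a\,g_{1}(\partial t,\partial t)=-a$, the minus sign coming from $g_{1}=-\mathrm{d}t^{2}$. Hence the bracket equals $f\cdot af+f^{2}\cdot(-a)=af^{2}-af^{2}=0$. Note that the cancellation forces $f'=f$, so the exponential warping is exactly what is needed; the Riemannian sign $+\mathrm{d}t^{2}$ would instead give the never-vanishing quantity $af(f'+f)$, which is why the GRW (Lorentzian) structure is essential. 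With all four terms zero we obtain $g(\overline\nabla_{X}\zeta,X)=0$ for every $X$, and Lemma 3.8 finishes the argument. Equivalently, the verification above is precisely a check of the hypotheses of Proposition 3.17(3): $\zeta_{1}$ is semi-symmetric metric Killing, $\zeta_{2}$ is Killing, $\zeta_{1}(f)+f\pi(\zeta_{1})=af-af=0$, and $\pi(X_{1})g_{2}(X_{2},\zeta_{2})=0$, so the conclusion also follows from that proposition.
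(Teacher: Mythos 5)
Your proposal is correct and follows essentially the same route as the paper: the paper's proof simply invokes Proposition 3.17(3) and checks that $a\partial t(f)+f\pi(a\partial t)=a\dot f-af=0$ forces $f=e^t$ and that $\pi(u\partial t)g_2(X_2,\zeta_2)=-ug_2(X_2,\zeta_2)=0$ forces $g_2(X_2,\zeta_2)=0$, which is exactly your final paragraph. Your term-by-term verification via Corollary 3.15, including the explicit check that $a\partial t$ is semi-symmetric metric Killing on $I$, is just a slightly more detailed unwinding of the same argument.
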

\begin{proof}
By Proposition 3.17.(3), we just need $a\partial t(f)+f\pi(a\partial t)=0,$ and
$\pi(u\partial t)g_{2}(X_{2},\zeta_{2})=0.$ \par  On the one hand,
$a\partial t(f)+f\pi(a\partial t)=a\dot{f}+fg(a\partial t,\partial t)=a\dot{f}-af=0.$ Consider that $a\neq0,$ we have $\dot{f}=f,$ then $f=e^t.$\par
On the other hand, $\pi(u\partial t)g_{2}(X_{2},\zeta_{2})=g(u\partial t,\partial t)g_{2}(X_{2},\zeta_{2})
=-ug_{2}(X_{2},\zeta_{2})=0,$ Consider that $u\neq0,$ we have $g_{2}(X_{2},\zeta_{2})=0.$
\end{proof}

By Corollary 3.16, we can easily get:
\begin{Proposition}
Let $M=M_{1}\times _{f}M_{2}$ be a warped product with $\overline\nabla,\,P\in\Gamma(TM_{2}),\,\zeta=\zeta_{1}+\zeta_{2}\in\Gamma(TM)$. Then $\zeta$ is a semi-symmetric metric Killing vector field if one of the following conditions holds:\\
$(1)\,\zeta=\zeta_{1},\,\zeta_{1}$ is a Killing vector field and $f\zeta_{1}(f)\|X_{2}\|_{2}^2-\pi(X_{2})g_{1}(X_{1},\zeta_{1})=0.$\\
$(2)\,\zeta=\zeta_{2},\,\zeta_{2}$ is a  Killing vector field and $\pi(\zeta_{2})\|X\|^2-f^2\pi(X_{2})g_{2}(X_{2},\zeta_{2})=0.$\\
$(3)\,\zeta=\zeta_{1}+\zeta_{2},$ each $\zeta_{i}$ is a Killing vector field and
$f\zeta_{1}(f)\|X_{2}\|_{2}^2+\pi(\zeta_{2})\|X\|^2-\pi(X_{2})g(X,\zeta)=0.$
\end{Proposition}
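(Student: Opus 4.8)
The plan is to read each case directly off Corollary 3.16 together with the characterization of semi-symmetric metric Killing fields in Lemma 3.8. By Lemma 3.8 it suffices to verify $g(\overline\nabla_{X}\zeta,X)=0$ for every $X=X_{1}+X_{2}\in\Gamma(TM)$, and Corollary 3.16 (which applies here since $P\in\Gamma(TM_{2})$) expresses this quantity as
$$g(\overline\nabla_{X}\zeta,X)=g_{1}(\nabla^{1}_{X_{1}}\zeta_{1},X_{1})+f^{2}g_{2}(\nabla^{2}_{X_{2}}\zeta_{2},X_{2})+\pi(\zeta_{2})\|X\|^2+f\zeta_{1}(f)\|X_{2}\|_{2}^2-\pi(X_{2})g(X,\zeta).$$
So in each case I would substitute the stated hypotheses into this single identity and check that the right-hand side collapses to zero.

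For case $(1)$ I would set $\zeta_{2}=0$. Then the second term and the third term $\pi(\zeta_{2})\|X\|^2$ both vanish, and since $\zeta=\zeta_{1}\in\Gamma(TM_{1})$ while $P\in\Gamma(TM_{2})$, the orthogonality of $g=g_{1}\oplus f^{2}g_{2}$ reduces $g(X,\zeta)$ to $g_{1}(X_{1},\zeta_{1})$. The identity then becomes $g_{1}(\nabla^{1}_{X_{1}}\zeta_{1},X_{1})+f\zeta_{1}(f)\|X_{2}\|_{2}^2-\pi(X_{2})g_{1}(X_{1},\zeta_{1})$; the assumption that $\zeta_{1}$ is Killing kills the first term via Lemma 3.6, and the remaining two terms are exactly the assumed relation. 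Case $(2)$ is symmetric: setting $\zeta_{1}=0$ annihilates the first and fourth terms, the factor $g(X,\zeta)$ becomes $f^{2}g_{2}(X_{2},\zeta_{2})$ (the extra $f^{2}$ coming from the fibre part of the metric), $\zeta_{2}$ being Killing removes $f^{2}g_{2}(\nabla^{2}_{X_{2}}\zeta_{2},X_{2})$ through Lemma 3.6 applied on $M_{2}$, and the surviving terms coincide with the assumed condition $\pi(\zeta_{2})\|X\|^2-f^{2}\pi(X_{2})g_{2}(X_{2},\zeta_{2})=0$.

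For case $(3)$ both terms $g_{1}(\nabla^{1}_{X_{1}}\zeta_{1},X_{1})$ and $f^{2}g_{2}(\nabla^{2}_{X_{2}}\zeta_{2},X_{2})$ vanish because each $\zeta_{i}$ is Killing, leaving precisely the expression $f\zeta_{1}(f)\|X_{2}\|_{2}^2+\pi(\zeta_{2})\|X\|^2-\pi(X_{2})g(X,\zeta)$ that the hypothesis assumes to be zero. The only genuinely non-automatic step, and the thing to get right, is the decomposition of $g(X,\zeta)$: since $g=g_{1}\oplus f^{2}g_{2}$ is orthogonal one has $g(X,\zeta)=g_{1}(X_{1},\zeta_{1})+f^{2}g_{2}(X_{2},\zeta_{2})$, and I must track which summand survives after setting $\zeta_{1}$ or $\zeta_{2}$ to zero, keeping the warping factor $f^{2}$ attached to the fibre component. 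Everything else is direct substitution.
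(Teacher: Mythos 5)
Your proposal is correct and is precisely the paper's own route: the paper derives this proposition directly from Corollary 3.16 together with the characterizations $g(\nabla_{X}\zeta,X)=0$ and $g(\overline\nabla_{X}\zeta,X)=0$, substituting $\zeta_{2}=0$, $\zeta_{1}=0$, or neither, and using $g(X,\zeta)=g_{1}(X_{1},\zeta_{1})+f^{2}g_{2}(X_{2},\zeta_{2})$ exactly as you do. The only quibble is a citation slip: the Killing characterization you invoke is Lemma 3.7 in the paper's numbering (Lemma 3.6 there is a definition), and the semi-symmetric metric one is Lemma 3.8, which you cite correctly.
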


\begin{Proposition}
Let $M=M_{1}\times _{f}M_{2}$ be a warped product with $\overline\nabla,\,P\in\Gamma(TM_{2}),\,\zeta=\zeta_{1}+\zeta_{2}$ is a semi-symmetric metric Killing vector field, then:\\
$(1)\,\zeta_{1}$ is a Killing vector field on $M_{1}$ if $\pi(\zeta_{2})=0.$\\
$(2)\,\zeta_{2}$ is a Killing vector field on $M_{2}$ if $\pi(\zeta_{2})=0$ and
$f\zeta_{1}(f)\|X_{2}\|_{2}^2-\pi(X_{2})g(X,\zeta)=0.$
\end{Proposition}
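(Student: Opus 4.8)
\section*{Proof proposal}

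The plan is to read everything off from Corollary 3.16 combined with the characterization of semi-symmetric metric Killing fields in Lemma 3.8. Since $\zeta$ is a semi-symmetric metric Killing vector field, equation $(9)$ gives $g(\overline\nabla_{X}\zeta,X)=0$ for every $X\in\Gamma(TM)$, so Corollary 3.16 yields the identity
$$0=g_{1}(\nabla^1_{X_{1}}\zeta_{1},X_{1})+f^{2}g_{2}(\nabla^2_{X_{2}}\zeta_{2},X_{2})+\pi(\zeta_{2})\|X\|^2+f\zeta_{1}(f)\|X_{2}\|_{2}^2-\pi(X_{2})g(X,\zeta),$$
valid for all $X=X_{1}+X_{2}$. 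Both parts will then follow by choosing $X$ suitably and invoking the Killing characterization of Lemma 3.7 on each factor.

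For part $(1)$, I would specialize to a purely horizontal field $X=X_{1}\in\Gamma(TM_{1})$, i.e.\ take $X_{2}=0$. Then $\|X_{2}\|_{2}^2=0$, $\pi(X_{2})=0$ and $g_{2}(\nabla^2_{X_{2}}\zeta_{2},X_{2})=0$, while $\|X\|^2=g_{1}(X_{1},X_{1})$, so the identity collapses to $g_{1}(\nabla^1_{X_{1}}\zeta_{1},X_{1})+\pi(\zeta_{2})\,g_{1}(X_{1},X_{1})=0$. Under the hypothesis $\pi(\zeta_{2})=0$ this becomes $g_{1}(\nabla^1_{X_{1}}\zeta_{1},X_{1})=0$ for every $X_{1}$, and Lemma 3.7 (equation $(8)$) applied on $(M_{1},g_{1})$ shows that $\zeta_{1}$ is a Killing vector field on $M_{1}$.

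For part $(2)$, I would keep $X$ general and feed in both hypotheses. Imposing $\pi(\zeta_{2})=0$ removes the third term, and since part $(1)$ already gives $g_{1}(\nabla^1_{X_{1}}\zeta_{1},X_{1})=0$ for all $X_{1}$, the identity reduces to $f^{2}g_{2}(\nabla^2_{X_{2}}\zeta_{2},X_{2})+f\zeta_{1}(f)\|X_{2}\|_{2}^2-\pi(X_{2})g(X,\zeta)=0$. The second hypothesis $f\zeta_{1}(f)\|X_{2}\|_{2}^2-\pi(X_{2})g(X,\zeta)=0$ now cancels the last two terms, leaving $f^{2}g_{2}(\nabla^2_{X_{2}}\zeta_{2},X_{2})=0$. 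Because $f:M_{1}\to(0,\infty)$ is strictly positive, $f^2$ never vanishes, so $g_{2}(\nabla^2_{X_{2}}\zeta_{2},X_{2})=0$ for all $X_{2}$, and Lemma 3.7 on $(M_{2},g_{2})$ gives that $\zeta_{2}$ is a Killing vector field on $M_{2}$.

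No serious obstacle arises; the only care needed is bookkeeping—checking that the purely horizontal substitution genuinely annihilates every fiber term (in particular $\pi(X_{2})=0$ and the warped norm reducing to $g_{1}(X_{1},X_{1})$), and remembering to invoke part $(1)$ inside the proof of part $(2)$ so as to discard the $g_{1}(\nabla^1_{X_{1}}\zeta_{1},X_{1})$ term before the last hypothesis can be applied. The strict positivity of $f$ is exactly what licenses cancelling $f^{2}$ at the final step.
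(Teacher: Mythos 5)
Your proposal is correct and follows exactly the route the paper intends: the paper states this proposition as an immediate consequence of Corollary 3.16 (via the characterization $g(\overline\nabla_{X}\zeta,X)=0$ of Lemma 3.8), specializing $X$ to the factors and invoking Lemma 3.7 on each one, which is precisely what you do. The bookkeeping (taking $X_{2}=0$ for part (1), reusing part (1) and the positivity of $f$ in part (2)) is all sound.
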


Now we recall the definition of standard static space-times as follow:
\begin{Definition}
A {\bfseries standard static space-time} $(M,g)$ is a warped product of the form $M=M_{1}\times_{f}I$ with the metric tensor $g=g_{1}\oplus (-f^2)\mathrm{d}t^2,$ where $f:M_{1}\rightarrow (0,\infty)$ is smooth, $I$ is an open interval in $\mathbb{R}.$
\end{Definition}

\begin{Proposition}
Let $M=M_{1}\times_{f}I$ be a standard static space-time with $\overline\nabla,\,P=\partial t\in\Gamma(TI),\,\zeta=\zeta_{1}+a\partial t\in\Gamma(TM),\,a\in\mathbb{R}\setminus\{0\}.$ Then for any
$X=X_{1}+u\partial t\in\Gamma(TM),\,u\in\mathbb{R}\setminus\{0\},$ we have:
$\zeta$ is a semi-symmetric metric Killing vector field on $M$ if $\zeta_{1}$ is a Killing vector field on $M_{1}$ and $ufg_{1}(X_{1},\zeta_{1})-u^2\zeta_{1}(f)-af\|X_{1}\|_{1}^2=0.$
\end{Proposition}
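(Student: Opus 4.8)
The plan is to exploit the fact that $M=M_{1}\times_{f}I$ is a warped product whose \emph{fiber} is $M_{2}=I$, so that the background vector field $P=\partial t$ lies in $\Gamma(TM_{2})$; this is precisely the regime covered by Corollary 3.16 and Proposition 3.21, the $P\in\Gamma(TM_{2})$ analogue of the tools used for the generalized Robertson--Walker case. Comparing the given metric $g=g_{1}\oplus(-f^{2})\mathrm{d}t^{2}$ with the general warped-product form $g=g_{1}\oplus f^{2}g_{2}$, I would first record that the fiber metric is $g_{2}=-\mathrm{d}t^{2}$, so that $g(\partial t,\partial t)=-f^{2}$ and $g_{2}(\partial t,\partial t)=-1$. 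Writing $\zeta=\zeta_{1}+a\partial t$ and $X=X_{1}+u\partial t$, I set $\zeta_{2}=a\partial t$ and $X_{2}=u\partial t$.

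Next I would verify the structural hypotheses of Proposition 3.21.(3), namely that both components are Killing on their respective factors. The field $\zeta_{1}$ is Killing on $M_{1}$ by assumption, and $\zeta_{2}=a\partial t$ with $a$ constant is Killing on $I$ by Example 3.12 (equivalently, because $I$ is flat and $a$ is constant, $\nabla^{2}_{X_{2}}\zeta_{2}=0$). Thus the only remaining thing to check is the scalar identity appearing in Proposition 3.21.(3),
$$f\zeta_{1}(f)\|X_{2}\|_{2}^2+\pi(\zeta_{2})\|X\|^2-\pi(X_{2})g(X,\zeta)=0.$$

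The main step is then a direct evaluation of each ingredient using the sign conventions above: $\|X_{2}\|_{2}^2=-u^{2}$, $\pi(\zeta_{2})=g(a\partial t,\partial t)=-af^{2}$, $\pi(X_{2})=-uf^{2}$, $\|X\|^2=\|X_{1}\|_{1}^2-u^{2}f^{2}$, and $g(X,\zeta)=g_{1}(X_{1},\zeta_{1})-uaf^{2}$. Substituting and expanding, the two quartic terms $au^{2}f^{4}$ (from $\pi(\zeta_{2})\|X\|^2$) and $-u^{2}af^{4}$ (from $-\pi(X_{2})g(X,\zeta)$) cancel, leaving
$$-u^{2}f\zeta_{1}(f)-af^{2}\|X_{1}\|_{1}^2+uf^{2}g_{1}(X_{1},\zeta_{1})=f\big[\,ufg_{1}(X_{1},\zeta_{1})-u^{2}\zeta_{1}(f)-af\|X_{1}\|_{1}^2\,\big].$$
Since $f>0$, the bracketed factor vanishes exactly under the stated hypothesis $ufg_{1}(X_{1},\zeta_{1})-u^{2}\zeta_{1}(f)-af\|X_{1}\|_{1}^2=0$, so the identity of Proposition 3.21.(3) holds and $\zeta$ is a semi-symmetric metric Killing vector field.

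I expect the only genuine obstacle to be bookkeeping the Lorentzian signs coming from $g_{2}=-\mathrm{d}t^{2}$ (in particular keeping $\|X_{2}\|_{2}^2=-u^{2}$ and $g(\partial t,\partial t)=-f^{2}$ straight) together with spotting the cancellation of the $f^{4}$ terms, which is what produces the clean final condition. As an alternative that avoids invoking Proposition 3.21, one may substitute the same quantities directly into the formula of Corollary 3.16 and apply Lemma 3.8: this yields $g(\overline\nabla_{X}\zeta,X)=f\big[\,ufg_{1}(X_{1},\zeta_{1})-u^{2}\zeta_{1}(f)-af\|X_{1}\|_{1}^2\,\big]$ with the identical bracket, giving an equivalent one-line derivation.
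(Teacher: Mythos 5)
Your proposal is correct and follows essentially the same route as the paper: invoke Proposition 3.21.(3), substitute $\zeta_{2}=a\partial t$, $X_{2}=u\partial t$ with the Lorentzian fiber metric $g_{2}=-\mathrm{d}t^{2}$, observe the cancellation of the $f^{4}$ terms, and divide by $f>0$ to obtain the stated condition. Your explicit check that $a\partial t$ is Killing on $I$ is a small, welcome addition that the paper leaves implicit.
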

\begin{proof}
By Proposition 3.21.(3), we just need
$$f\zeta_{1}(f)\|u\partial t\|_{2}^2+\pi(a\partial t)\|X_{1}+u\partial t\|^2
-\pi(u\partial t)g(X_{1}+u\partial t,\zeta_{1}+a\partial t)=0.$$
$$\Rightarrow f\zeta_{1}(f)g_{2}(u\partial t,u\partial t)
+g(a\partial t,\partial t)\Big[\|X_{1}\|_{1}^2+f^2g_{2}(u\partial t,u\partial t)\Big]
-g(u\partial t,\partial t)\Big[g_{1}(X_{1},\zeta_{1})+f^2g_{2}(u\partial t,a\partial t)\Big]=0,$$
$$\Rightarrow -u^2f\zeta_{1}(f)-af^2\Big[\|X_{1}\|_{1}^2-u^2f^2\Big]+uf^2\Big[g_{1}(X_{1},\zeta_{1})-auf^2\Big]=0,$$
$$\Rightarrow uf^2g_{1}(X_{1},\zeta_{1})-u^2f\zeta_{1}(f)-af^2\|X_{1}\|_{1}^2=0,$$
since $f>0,$ we get $ufg_{1}(X_{1},\zeta_{1})-u^2\zeta_{1}(f)-af\|X_{1}\|_{1}^2=0.$
\end{proof}

\section{\large Semi-symmetric Metric Killing Vector Fields on Multiply Warped Products with a Semi-symmetric Metric Connection}

\begin{Lemma}\label{l1}$^{[7]}$
Let $M=B\times _{f_{1}}M_{1}\times _{f_{2}}M_{2}\times\cdots\times _{f_{m}}M_{m}$ be a multiply warped product with $\overline\nabla$. If $X_{B},Y_{B}\in\Gamma(TB),\;X_{i},Y_{i}\in\Gamma(TM_{i})$ and $P\in\Gamma(TB),\,i\in\{1,\cdots,m\},$ then\\
$(1)\overline \nabla_{X_{B}}Y_{B}=\overline \nabla_{X_{B}}^{B}Y_{B}; $  \\
$(2)\overline \nabla_{X_{B}}Y_{i}=\frac{X_{B}(f_{i})}{f_{i}}Y_{i};$   \\
$(3)\overline \nabla_{Y_{i}}X_{B}=\Big[\frac{X_{B}(f_{i})}{f_{i}}+\pi(X_{B})\Big]Y_{i}; $   \\
$(4)\overline \nabla_{X_{i}}Y_{j}=0,$ if $i\neq j;$  \\
$(5)\overline \nabla_{X_{i}}Y_{i}=
-f_{i}g_{i}(X_{i},Y_{i})\mathrm{grad}f_{i}+\nabla_{X_{i}}^{i}Y_{i}-f_{i}^{2}g_{i}(X_{i},Y_{i})P.$
\end{Lemma}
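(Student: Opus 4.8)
The plan is to reduce everything to the Levi-Civita connection $\nabla$ of the product metric $g = g_B \oplus f_1^2 g_1 \oplus \cdots \oplus f_m^2 g_m$ via the defining formula $(3)$, namely $\overline\nabla_X Y = \nabla_X Y + \pi(Y)X - g(X,Y)P$. So the first step is to record the Levi-Civita connection of a multiply warped product, the natural generalization of the O'Neill-type formulas underlying Lemma 3.1: for $X_B, Y_B \in \Gamma(TB)$ and $X_i, Y_i \in \Gamma(TM_i)$ one has $\nabla_{X_B} Y_B = \nabla^B_{X_B} Y_B$, $\nabla_{X_B} Y_i = \nabla_{Y_i} X_B = \frac{X_B(f_i)}{f_i} Y_i$, $\nabla_{X_i} Y_j = 0$ for $i \neq j$, and $\nabla_{X_i} Y_i = \nabla^i_{X_i} Y_i - f_i g_i(X_i, Y_i)\,\mathrm{grad}\, f_i$. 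I would either cite these or derive them from the Koszul formula, the point being that each $f_i$ depends only on $B$, so $\mathrm{grad}\, f_i \in \Gamma(TB)$ and distinct fibers never interact.

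The second step is to substitute each of these into $(3)$ and simplify using the two orthogonality facts that follow from $g$ being a direct sum and from $P \in \Gamma(TB)$: first, $\pi(Z) = g(Z,P)$ vanishes on every fiber direction, so $\pi(Y_i) = 0$; second, $g$ kills all cross terms between the base and a fiber and between two distinct fibers, while $g(X_i, Y_i) = f_i^2 g_i(X_i, Y_i)$ on a single fiber. For $(1)$ the expression immediately regroups into the semi-symmetric metric connection $\overline\nabla^B$ of $B$. For $(2)$ and $(4)$ both correction terms drop out, leaving the Levi-Civita values. For $(3)$ only the $\pi(X_B)$ term survives, producing the bracketed coefficient $\frac{X_B(f_i)}{f_i} + \pi(X_B)$. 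For $(5)$ the $\pi(Y_i)$ term vanishes and $g(X_i, Y_i) = f_i^2 g_i(X_i, Y_i)$ reproduces the final $-f_i^2 g_i(X_i,Y_i)P$ summand.

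I expect the only genuine content to lie in the first step, specifically in verifying $(4)$ and $(5)$: one must check that $\nabla_{X_i} Y_i$ has no component along any other fiber $M_j$ and that $\nabla_{X_i}Y_j = 0$ for $i \neq j$. This is exactly where the multiply warped structure is used, since every warping function is pulled back from $B$, so the gradient terms land in $TB$ rather than in a neighboring fiber, and it is what prevents the single-warped computation of Lemma 3.1 from acquiring cross-fiber corrections. Once that is in hand, the passage from $\nabla$ to $\overline\nabla$ through $(3)$ is purely formal bookkeeping with the orthogonality relations above.
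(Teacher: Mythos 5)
Your proposal is correct and is exactly the standard derivation: the paper itself states this lemma without proof as a quotation from reference [7], and the Levi-Civita formulas you need for the first step appear verbatim later in the paper as Lemma 6.7 (quoted from [9]), after which the passage to $\overline\nabla$ via formula $(3)$ with $\pi(Y_i)=0$ and $g(X_i,Y_i)=f_i^2g_i(X_i,Y_i)$ is precisely the bookkeeping you describe. No gaps; all five cases check out as you computed them.
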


\begin{Lemma}\label{l1}$^{[7]}$
Let $M=B\times _{f_{1}}M_{1}\times _{f_{2}}M_{2}\times\cdots\times _{f_{m}}M_{m}$ be a multiply warped product with $\overline\nabla$. If $X_{B},Y_{B}\in\Gamma(TB),\;X_{i},Y_{i}\in\Gamma(TM_{i})$ and $P\in\Gamma(TM_{r})$ for a fixed $r,\,i\in(1,\cdots,m),$ then\\
$(1)\overline \nabla_{X_{B}}Y_{B}=\nabla_{X_{B}}^{B}Y_{B}-g_{B}(X_{B},Y_{B})P; $  \\
$(2)\overline \nabla_{X_{B}}Y_{i}=\frac{X_{B}(f_{i})}{f_{i}}Y_{i}+\pi(Y_{i})X_{B};$   \\
$(3)\overline \nabla_{Y_{i}}X_{B}=\frac{X_{B}(f_{i})}{f_{i}}Y_{i}; $   \\
$(4)\overline \nabla_{X_{i}}Y_{j}=\pi(Y_{j})X_{i},$ if $i\neq j;$\\
$(4)\overline \nabla_{X_{i}}Y_{i}=
-f_{i}g_{i}(X_{i},Y_{i})\mathrm{grad}f_{i}+\nabla_{X_{i}}^{i}Y_{i}+\pi(Y_{i})X_{i}-f_{i}^{2}g_{i}(X_{i},Y_{i})P.$
\end{Lemma}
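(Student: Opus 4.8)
The plan is to reduce the whole statement to the defining relation $(3)$, namely $\overline\nabla_X Y=\nabla_X Y+\pi(Y)X-g(X,Y)P$, applied factor by factor. First I would record the Levi-Civita connection formulas on the multiply warped product $M=B\times_{f_1}M_1\times\cdots\times_{f_m}M_m$: for $X_B,Y_B\in\Gamma(TB)$ and $X_i,Y_i\in\Gamma(TM_i)$,
$$\nabla_{X_B}Y_B=\nabla^B_{X_B}Y_B,\qquad \nabla_{X_B}Y_i=\nabla_{Y_i}X_B=\frac{X_B(f_i)}{f_i}Y_i,\qquad \nabla_{X_i}Y_j=0\ \ (i\neq j),$$
$$\nabla_{X_i}Y_i=\nabla^i_{X_i}Y_i-f_i\,g_i(X_i,Y_i)\,\mathrm{grad}\,f_i.$$
These are immediate from Lemma 4.1 by setting $P=0$, since $\overline\nabla=\nabla$ in that case; alternatively they follow from the Koszul formula once one uses that each $f_i$ depends only on the base and that distinct factors are $g$-orthogonal.

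Next I would substitute these expressions into $(3)$ and evaluate the two correction terms $\pi(Y)X$ and $g(X,Y)P$ under the hypothesis $P\in\Gamma(TM_r)$. The key structural fact is the block-orthogonality of $g=g_B\oplus f_1^2 g_1\oplus\cdots\oplus f_m^2 g_m$: because $P$ lies in the $r$-th fiber, we have $\pi(X_B)=g(X_B,P)=0$ and $g(X_B,Y_i)=g(X_i,Y_j)=0$ as soon as the two arguments sit in different blocks, whereas $\pi(Y_i)=g(Y_i,P)$ is retained (and is automatically zero unless $i=r$). Running through the cases then gives $(1)$ at once (only the $-g_B(X_B,Y_B)P$ term survives), $(2)$ and $(3)$ with the $\frac{X_B(f_i)}{f_i}Y_i$ piece coming from $\nabla$ and only $\pi(Y_i)X_B$ surviving as a correction in $(2)$ while $\pi(X_B)=0$ removes it in $(3)$, and finally the separate $i\neq j$ and $i=j$ identities of $(4)$.

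I expect the only delicate point to be the bookkeeping of warping factors. In the last identity the correction term is $g(X_i,Y_i)P$, and one must rewrite $g|_{TM_i}=f_i^2 g_i$ to turn this into $-f_i^2 g_i(X_i,Y_i)P$, which has to be kept distinct from the $-f_i\,g_i(X_i,Y_i)\,\mathrm{grad}\,f_i$ term that already arises from the Levi-Civita part. So the main obstacle is not conceptual but clerical: tracking which cross terms vanish by orthogonality and whether a surviving term carries $f_i$ or $f_i^2$; once $(3)$ and the Levi-Civita formulas are in place the verification is purely mechanical.
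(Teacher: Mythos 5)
Your proposal is correct: substituting the Levi-Civita formulas for a multiply warped product (Lemma 6.7, quoted from [9]) into the defining relation $\overline\nabla_{X}Y=\nabla_{X}Y+\pi(Y)X-g(X,Y)P$ and killing cross terms by the block-orthogonality of $g=g_{B}\oplus f_{1}^{2}g_{1}\oplus\cdots\oplus f_{m}^{2}g_{m}$ (so that $\pi(X_{B})=0$ and $g|_{TM_{i}}=f_{i}^{2}g_{i}$) yields all five identities exactly as stated. The paper itself cites this lemma from [7] without reproducing a proof, and your derivation is the same one used there, so no further comparison is needed.
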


By Lemma 4.1 and straightforward computation, we can get:
\begin{Proposition}
Let $M=B\times _{f_{1}}M_{1}\times _{f_{2}}M_{2}\times\cdots\times _{f_{m}}M_{m}$ be a multiply warped product with $\overline\nabla,\,\zeta\in\Gamma(TM),\,P\in\Gamma(TB)$. Then for any vector fields $X,Y\in\Gamma(TM),$ we have:
\begin{eqnarray}
(\overline{\mathcal{L}}_{\zeta}g)(X,Y)&=&(\overline{\mathcal{L}}^{B}_{\zeta_{B}}g_{B})(X_{B},Y_{B})
+\sum\limits_{i=1}^{m}f_{i}^{2}(\mathcal{L}^{i}_{\zeta_{i}}g_{i})(X_{i},Y_{i})
+\sum\limits_{i=1}^{m}2[f_{i}\zeta_{B}(f_{i})+f_{i}^{2}\pi(\zeta_{B})]g_{i}(X_{i},Y_{i}) \nonumber \\
& &-\sum\limits_{i=1}^{m}f_{i}^{2}g_{i}(X_{i},\zeta_{i})\pi(Y_{B})
-\sum\limits_{i=1}^{m}f_{i}^{2}g_{i}(Y_{i},\zeta_{i})\pi(X_{B})
\end{eqnarray}
\end{Proposition}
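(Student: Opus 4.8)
The plan is to reduce this identity to a termwise application of Lemma 4.1, exactly paralleling the warped product computation behind Proposition 3.12 but now carrying a sum over the $m$ fibre factors. I begin by decomposing every vector field along the orthogonal splitting $TM = TB \oplus TM_{1} \oplus \cdots \oplus TM_{m}$, writing $\zeta = \zeta_{B} + \sum_{i}\zeta_{i}$, $X = X_{B} + \sum_{i}X_{i}$, $Y = Y_{B} + \sum_{i}Y_{i}$ with $\zeta_{B},X_{B},Y_{B}\in\Gamma(TB)$ and $\zeta_{i},X_{i},Y_{i}\in\Gamma(TM_{i})$. By the defining equation $(6)$ it suffices to compute $g(\overline\nabla_{X}\zeta,Y)$ and then symmetrise in $X$ and $Y$.

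Next I expand $\overline\nabla_{X}\zeta = \overline\nabla_{X_{B}+\sum_{i}X_{i}}\big(\zeta_{B}+\sum_{j}\zeta_{j}\big)$ by bilinearity of the connection and evaluate each resulting piece with Lemma 4.1: the base--base term gives $\overline\nabla^{B}_{X_{B}}\zeta_{B}$ by $(1)$; the base--fibre terms $\overline\nabla_{X_{B}}\zeta_{j}$ give $\frac{X_{B}(f_{j})}{f_{j}}\zeta_{j}$ by $(2)$; the fibre--base terms $\overline\nabla_{X_{i}}\zeta_{B}$ give $\big[\frac{\zeta_{B}(f_{i})}{f_{i}}+\pi(\zeta_{B})\big]X_{i}$ by $(3)$; the off-diagonal fibre terms $\overline\nabla_{X_{i}}\zeta_{j}$ with $i\neq j$ vanish by $(4)$; and each diagonal fibre term $\overline\nabla_{X_{i}}\zeta_{i}$ gives $-f_{i}g_{i}(X_{i},\zeta_{i})\mathrm{grad}f_{i} + \nabla^{i}_{X_{i}}\zeta_{i} - f_{i}^{2}g_{i}(X_{i},\zeta_{i})P$ by $(5)$. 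I then pair this with $Y$, using that the metric is the orthogonal sum $g = g_{B}\oplus f_{1}^{2}g_{1}\oplus\cdots\oplus f_{m}^{2}g_{m}$, that $\mathrm{grad}f_{i}$ and $P$ lie in $\Gamma(TB)$ so that $g(\mathrm{grad}f_{i},Y)=Y_{B}(f_{i})$ and $g(P,Y)=\pi(Y_{B})$, and that a factor $f_{i}^{2}$ appears whenever a fibre component of $TM_{i}$ is contracted through $g$.

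Finally I symmetrise: the computation of $g(\overline\nabla_{Y}\zeta,X)$ is identical with $X$ and $Y$ interchanged, and I add the two expressions. Three things then occur. The base--base pair $g_{B}(\overline\nabla^{B}_{X_{B}}\zeta_{B},Y_{B})+g_{B}(\overline\nabla^{B}_{Y_{B}}\zeta_{B},X_{B})$ collapses, by the definition of the semi-symmetric metric Lie derivative on $B$, to $(\overline{\mathcal{L}}^{B}_{\zeta_{B}}g_{B})(X_{B},Y_{B})$; each diagonal fibre pair $f_{i}^{2}\big[g_{i}(\nabla^{i}_{X_{i}}\zeta_{i},Y_{i})+g_{i}(\nabla^{i}_{Y_{i}}\zeta_{i},X_{i})\big]$ collapses, by Lemma 3.3 applied on $M_{i}$, to $f_{i}^{2}(\mathcal{L}^{i}_{\zeta_{i}}g_{i})(X_{i},Y_{i})$; and the $\big[f_{i}\zeta_{B}(f_{i})+f_{i}^{2}\pi(\zeta_{B})\big]$ contributions together with the $P$ contributions reproduce the remaining three sums in the stated formula.

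The step demanding the most care, and the only place where anything beyond routine contraction happens, is the collection of cross terms produced by parts $(2)$ and $(5)$. Contracting the base--fibre term $\frac{X_{B}(f_{i})}{f_{i}}\zeta_{i}$ against $Y$ yields $f_{i}X_{B}(f_{i})g_{i}(\zeta_{i},Y_{i})$, while contracting the $\mathrm{grad}f_{i}$ term against $Y$ yields $-f_{i}Y_{B}(f_{i})g_{i}(X_{i},\zeta_{i})$; after adding the $X\leftrightarrow Y$ symmetrisation, these four quantities cancel in two pairs by the symmetry of each $g_{i}$. Confirming this cancellation is the main bookkeeping obstacle, and once it is checked the identity drops out. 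Since Proposition 3.12 is precisely the $m=1$ instance, no genuinely new phenomenon arises beyond replacing the single fibre by the sum over $i$.
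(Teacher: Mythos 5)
Your proposal is correct and follows exactly the route the paper intends: the paper offers no written proof beyond ``by Lemma 4.1 and straightforward computation,'' and your termwise expansion, the contraction against $g=g_{B}\oplus f_{1}^{2}g_{1}\oplus\cdots\oplus f_{m}^{2}g_{m}$, and the pairwise cancellation of the $X_{B}(f_{i})$ and $\mathrm{grad}f_{i}$ cross terms after symmetrisation are precisely that computation. Nothing further is needed.
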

By Lemma 4.2 and straightforward computation, we can get:
\begin{Proposition}
Let $M=B\times _{f_{1}}M_{1}\times _{f_{2}}M_{2}\times\cdots\times _{f_{m}}M_{m}$ be a multiply warped product with $\overline\nabla,\,\zeta\in\Gamma(TM),\,P\in\Gamma(TM_{r})$ for a fixed $r$. Then for any vector fields $X,Y\in\Gamma(TM),$ we have:
\begin{eqnarray}
(\overline{\mathcal{L}}_{\zeta}g)(X,Y)&=&(\mathcal{L}^{B}_{\zeta_{B}}g_{B})(X_{B},Y_{B})
+\sum\limits_{i=1}^{m}f_{i}^{2}(\mathcal{L}^{i}_{\zeta_{i}}g_{i})(X_{i},Y_{i})
+\sum\limits_{i=1}^{m}2\pi(\zeta_{i})g(X,Y)\nonumber \\
&+&\sum\limits_{i=1}^{m}2f_{i}\zeta_{B}(f_{i})g_{i}(X_{i},Y_{i})
-\sum\limits_{i=1}^{m}\pi(Y_{i})g(X,\zeta)-\sum\limits_{i=1}^{m}\pi(X_{i})g(Y,\zeta)
\end{eqnarray}
\end{Proposition}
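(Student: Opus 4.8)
The plan is to unwind the definition (6), namely $(\overline{\mathcal{L}}_{\zeta}g)(X,Y)=g(\overline\nabla_{X}\zeta,Y)+g(\overline\nabla_{Y}\zeta,X)$, and to evaluate everything through Lemma 4.2. First I would split each field along the product, writing $X=X_{B}+\sum_{j}X_{j}$, $Y=Y_{B}+\sum_{k}Y_{k}$, $\zeta=\zeta_{B}+\sum_{i}\zeta_{i}$, and expand $\overline\nabla_{X}\zeta$ by bilinearity into a base--base term $\overline\nabla_{X_{B}}\zeta_{B}$, base--fiber terms $\overline\nabla_{X_{B}}\zeta_{i}$, fiber--base terms $\overline\nabla_{X_{j}}\zeta_{B}$, same--fiber terms $\overline\nabla_{X_{i}}\zeta_{i}$, and cross--fiber terms $\overline\nabla_{X_{j}}\zeta_{i}$ with $i\neq j$. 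Each of these is replaced by the appropriate clause of Lemma 4.2. Pairing with $Y$ and using the orthogonal splitting $g=g_{B}\oplus\bigoplus_{i}f_{i}^{2}g_{i}$ annihilates every cross block, while the identifications $g(U_{B},V_{B})=g_{B}(U_{B},V_{B})$, $g(U_{i},V_{i})=f_{i}^{2}g_{i}(U_{i},V_{i})$ and $\pi(Z)=g(Z,P)$ turn the surviving pieces into the scalars appearing in the statement.

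The routine part is the recognition of the Levi-Civita contributions. The base--base term contributes $g_{B}(\nabla^{B}_{X_{B}}\zeta_{B},Y_{B})$ and the same--fiber terms contribute $f_{i}^{2}g_{i}(\nabla^{i}_{X_{i}}\zeta_{i},Y_{i})$; after adding the symmetric term $g(\overline\nabla_{Y}\zeta,X)$, Lemma 3.3 (equation (4)) identifies these as $(\mathcal{L}^{B}_{\zeta_{B}}g_{B})(X_{B},Y_{B})$ and $f_{i}^{2}(\mathcal{L}^{i}_{\zeta_{i}}g_{i})(X_{i},Y_{i})$. The fiber--base terms $\overline\nabla_{X_{j}}\zeta_{B}=\tfrac{\zeta_{B}(f_{j})}{f_{j}}X_{j}$ pair with $Y_{j}$ to give $f_{j}\zeta_{B}(f_{j})g_{j}(X_{j},Y_{j})$, and after symmetrization these become the $\sum_{i}2f_{i}\zeta_{B}(f_{i})g_{i}(X_{i},Y_{i})$ summand.

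The real bookkeeping, and the step I expect to cause the most trouble, is assembling the $P$- and $\pi$-dependent pieces correctly, where there are two delicate cancellations to track. First, the $\mathrm{grad}f_{i}$ terms coming from the same--fiber clause, namely $-f_{i}g_{i}(X_{i},\zeta_{i})\mathrm{grad}f_{i}$, pair against $Y_{B}$ and must cancel against the base--fiber terms $\tfrac{X_{B}(f_{i})}{f_{i}}\zeta_{i}$ paired against $Y_{i}$; this cancellation only emerges after one adds $g(\overline\nabla_{Y}\zeta,X)$ and uses the symmetry of each $g_{i}$. Second, the $\pi(\zeta_{i})$-terms are scattered across the base--fiber, same--fiber, and cross--fiber clauses as $\pi(\zeta_{i})X_{B}$, $\pi(\zeta_{i})X_{i}$, and $\pi(\zeta_{i})X_{j}$ ($j\neq i$), and these must be recombined block by block into the single expression $\pi(\zeta_{i})g(X,Y)$; here the cross--fiber clause $\overline\nabla_{X_{j}}\zeta_{i}=\pi(\zeta_{i})X_{j}$ is exactly what is needed to complete the full metric rather than a partial sum.

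Finally, the genuinely $P$-valued contributions $-g_{B}(X_{B},\zeta_{B})P$ and $-f_{i}^{2}g_{i}(X_{i},\zeta_{i})P$ collect into $-\pi(Y)g(X,\zeta)$, and since $P\in\Gamma(TM_{r})$ annihilates the base and every fiber except $M_{r}$ one has $\pi(Y)=\sum_{i}\pi(Y_{i})$, which rewrites this as $-\sum_{i}\pi(Y_{i})g(X,\zeta)$; the symmetric counterpart gives $-\sum_{i}\pi(X_{i})g(Y,\zeta)$. Summing all surviving terms then reproduces the claimed identity. The whole computation is the $m$-fiber version of the two-factor case already carried out in Proposition 3.14, so the only genuinely new feature is the cross--fiber clause, and the main task is to verify that no $P$-term is lost or double counted when the single fiber is replaced by the $m$ fibers of the multiply warped product.
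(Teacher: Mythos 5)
Your proposal is correct and follows exactly the route the paper intends: the paper proves this proposition "by Lemma 4.2 and straightforward computation," i.e., by expanding $(\overline{\mathcal{L}}_{\zeta}g)(X,Y)=g(\overline\nabla_{X}\zeta,Y)+g(\overline\nabla_{Y}\zeta,X)$ blockwise via Lemma 4.2, which is precisely your decomposition. Your bookkeeping checks out, including the two subtle points you flag: the $\mathrm{grad}f_{i}$/base--fiber cancellation that appears only after symmetrization, and the identity $\pi(Y)=\sum_{i}\pi(Y_{i})$ used to write the $P$-valued contributions as $-\sum_{i}\pi(Y_{i})g(X,\zeta)$.
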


Using the equation $(6)$ and Proposition 4.3, we have:

\begin{Corollary}
Let $M=B\times _{f_{1}}M_{1}\times _{f_{2}}M_{2}\times\cdots\times _{f_{m}}M_{m}$ be a multiply warped product with $\overline\nabla,\,P\in\Gamma(TB)$. Then for any vector field $X\in\Gamma(TM),$ we have:
\begin{eqnarray}
g(\overline\nabla_{X}\zeta,X)&=&g_{B}(\overline\nabla^B_{X_{B}}\zeta_{B},X_{B})
+\sum\limits_{i=1}^{m}f_{i}^{2}g_{i}(\nabla^i_{X_{i}}\zeta_{i},X_{i}) \nonumber \\
&-&\sum\limits_{i=1}^{m}f_{i}^{2}\pi(X_{B})g_{i}(X_{i},\zeta_{i})
+\sum\limits_{i=1}^{m}\big[f_{i}\zeta_{B}(f_{i})+f_{i}^{2}\pi(\zeta_{B})\big]\|X_{i}\|_{i}^2.
\end{eqnarray}
\end{Corollary}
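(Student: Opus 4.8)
The plan is to specialize the identity of Proposition 4.3 to the diagonal $Y = X$ and then read off $g(\overline\nabla_{X}\zeta, X)$ directly. The key observation is that the defining formula (6) for $\overline{\mathcal{L}}$ is symmetric in its two arguments, so putting $Y = X$ in (6) gives
$$(\overline{\mathcal{L}}_{\zeta}g)(X, X) = 2\,g(\overline\nabla_{X}\zeta, X).$$
Hence it suffices to evaluate $(\overline{\mathcal{L}}_{\zeta}g)(X, X)$ via Proposition 4.3 and divide by two.

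First I would set $Y = X$ throughout the formula in Proposition 4.3. On the right-hand side the base term $(\overline{\mathcal{L}}^{B}_{\zeta_{B}}g_{B})(X_{B}, X_{B})$ and each fiber term $(\mathcal{L}^{i}_{\zeta_{i}}g_{i})(X_{i}, X_{i})$ are themselves diagonal Lie-derivative evaluations. Applying the diagonal case of (6) on the base $B$ rewrites the first as $2\,g_{B}(\overline\nabla^{B}_{X_{B}}\zeta_{B}, X_{B})$, and applying the diagonal case of (4) on each factor $M_{i}$ rewrites the $i$-th fiber term as $2\,g_{i}(\nabla^{i}_{X_{i}}\zeta_{i}, X_{i})$. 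The warping term $\sum_{i} 2[f_{i}\zeta_{B}(f_{i}) + f_{i}^{2}\pi(\zeta_{B})]g_{i}(X_{i}, X_{i})$ already carries the factor $2$, and writing $g_{i}(X_{i}, X_{i}) = \|X_{i}\|_{i}^{2}$ puts it in the stated form.

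The only point needing a moment's care is the pair of cross terms $-\sum_{i} f_{i}^{2}g_{i}(X_{i}, \zeta_{i})\pi(Y_{B})$ and $-\sum_{i} f_{i}^{2}g_{i}(Y_{i}, \zeta_{i})\pi(X_{B})$: upon setting $Y = X$ these become identical, so they combine into $-2\sum_{i} f_{i}^{2}\pi(X_{B})g_{i}(X_{i}, \zeta_{i})$. Collecting every contribution and dividing by $2$ then yields precisely the claimed identity. There is no genuine obstacle here, as the corollary is a direct diagonal specialization of Proposition 4.3; the only thing to watch is the bookkeeping of the factors of $2$ after halving, and in particular ensuring the two symmetric cross terms are merged rather than double-counted.
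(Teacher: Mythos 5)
Your proposal is correct and follows the same route the paper intends: the paper derives this corollary simply by ``using equation (6) and Proposition 4.3,'' which is exactly your diagonal specialization $Y=X$ followed by halving, with the polarization identities $(\overline{\mathcal{L}}_{\zeta}g)(X,X)=2g(\overline\nabla_{X}\zeta,X)$ from (6) and $(\mathcal{L}^{i}_{\zeta_{i}}g_{i})(X_{i},X_{i})=2g_{i}(\nabla^{i}_{X_{i}}\zeta_{i},X_{i})$ from (4). Your handling of the merged cross terms and the factor of $2$ is exactly the bookkeeping required, so there is nothing to add.
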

Using the equation $(6)$ and Proposition 4.4, we have:
\begin{Corollary}
Let $M=B\times _{f_{1}}M_{1}\times _{f_{2}}M_{2}\times\cdots\times _{f_{m}}M_{m}$ be a multiply warped product with $\overline\nabla,\,\zeta\in\Gamma(TM),\,P\in\Gamma(TM_{r})$ for a fixed $r$. Then for any vector field $X\in\Gamma(TM),$ we have:
\begin{eqnarray}
g(\overline\nabla_{X}\zeta,X)&=&g_{B}(\nabla^B_{X_{B}}\zeta_{B},X_{B})
+\sum\limits_{i=1}^{m}f_{i}^{2}g_{i}(\nabla^i_{X_{i}}\zeta_{i},X_{i})
+\sum\limits_{i=1}^{m}\pi(\zeta_{i})\|X\|^2  \nonumber \\
&+&\sum\limits_{i=1}^{m}f_{i}\zeta_{B}(f_{i})\|X_{i}\|_{i}^2-\sum\limits_{i=1}^{m}\pi(X_{i})g(X,\zeta).
\end{eqnarray}
\end{Corollary}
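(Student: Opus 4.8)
The plan is to obtain the stated identity by specializing Proposition 4.4 to the diagonal $Y=X$ and invoking the two symmetrization identities (4) and (6). The starting observation is that putting $Y=X$ in the defining relation (6) gives $(\overline{\mathcal{L}}_\zeta g)(X,X)=2\,g(\overline\nabla_X\zeta,X)$; hence it suffices to evaluate the right-hand side of the identity in Proposition 4.4 at $Y=X$ and then divide by $2$.

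First I would substitute $Y_B=X_B$ and $Y_i=X_i$ into Proposition 4.4. The third and fourth sums become $\sum_i 2\pi(\zeta_i)\|X\|^2$ and $\sum_i 2f_i\zeta_B(f_i)\|X_i\|_i^2$, while the two trailing sums $-\sum_i\pi(Y_i)g(X,\zeta)$ and $-\sum_i\pi(X_i)g(Y,\zeta)$ merge into the single contribution $-2\sum_i\pi(X_i)g(X,\zeta)$.

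Next I would rewrite the Lie-derivative contributions from the base and the fibers with the help of Lemma 3.3. Applying (4) on the diagonal to the Levi-Civita connections $\nabla^B$ and $\nabla^i$ of the individual factors gives $(\mathcal{L}^B_{\zeta_B}g_B)(X_B,X_B)=2\,g_B(\nabla^B_{X_B}\zeta_B,X_B)$ and $(\mathcal{L}^i_{\zeta_i}g_i)(X_i,X_i)=2\,g_i(\nabla^i_{X_i}\zeta_i,X_i)$. After this substitution every term on the right-hand side carries a common factor $2$; dividing by $2$, as dictated by (6), reproduces exactly the asserted identity.

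The computation is purely mechanical, so the only thing to watch is the bookkeeping of the factors of $2$: the symmetrization in (6) on the left and the symmetrizations in (4) on the right each contribute a factor $2$, and these must all cancel consistently, together with the correct merging of the two $\pi$-sums. One conceptual point deserves mention: since $P\in\Gamma(TM_r)$, the base factor in Proposition 4.4 already carries the \emph{ordinary} Lie derivative $\mathcal{L}^B_{\zeta_B}g_B$, which is why the conclusion features the Levi-Civita term $g_B(\nabla^B_{X_B}\zeta_B,X_B)$ rather than a semi-symmetric one; the non-metric correction on $B$ has been absorbed earlier into the $\pi(\zeta_i)$ and $g(X,\zeta)$ terms.
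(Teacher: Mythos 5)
Your proposal is correct and follows exactly the route the paper intends: the paper derives this corollary by "using equation (6) and Proposition 4.4," which is precisely your diagonal substitution $Y=X$ combined with the symmetrization identities (4) and (6) and the cancellation of the common factor $2$. The bookkeeping of the $\pi$-sums and the factor-of-$2$ cancellations in your write-up all check out.
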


By Corollary 4.5, we can easily get:
\begin{Proposition}
Let $M=B\times _{f_{1}}M_{1}\times _{f_{2}}M_{2}\times\cdots\times _{f_{m}}M_{m}$ be a multiply warped product with $\overline\nabla,\,P\in\Gamma(TB),\,\zeta=\zeta_{B}+\sum\limits_{i=1}^{m}\zeta_{i}\in\Gamma(TM)$. Then $\zeta$ is a semi-symmetric metric Killing vector field if one of the following conditions holds:\\
$(1)\,\zeta=\zeta_{B},\,\zeta_{B}$ is a semi-symmetric metric Killing vector field, and $\sum\limits_{i=1}^{m}\big[f_{i}\zeta_{B}(f_{i})+f_{i}^{2}\pi(\zeta_{B})\big]\|X_{i}\|_{i}^2=0.$\\
$(2)\,\zeta=\zeta_{i}$ for a fixed $i,\,\zeta_{i}$ is a  Killing vector field and $\pi(X_{B})g_{i}(X_{i},\zeta_{i})=0.$\\
$(3)\,\zeta=\zeta_{B}+\zeta_{i}$ for a fixed $i,\,\zeta_{B}$ is a semi-symmetric metric Killing vector field, $\zeta_{i}$ is a  Killing vector \\ \mbox{} \quad field, and
$\big[\zeta_{B}(f_{i})+f_{i}\pi(\zeta_{B})\big]\|X_{i}\|_{i}^2-f_{i}\pi(X_{B})g_{i}(X_{i},\zeta_{i})=0.$\\
$(4)\,\zeta=\sum\limits_{i=1}^{m}\zeta_{i},$ each $\zeta_{i}$ is a Killing vector field and  $\sum\limits_{i=1}^{m}f_{i}^2\pi(X_{B})g_{i}(X_{i},\zeta_{i})=0,$ for any $i\in\{1,\cdots,m\}.$\\
$(5)\,\zeta=\zeta_{B}+\sum\limits_{i=1}^{m}\zeta_{i},\,\zeta_{B}$ is a semi-symmetric metric Killing vector field, each $\zeta_{i}$ is a  Killing vector \\ \mbox{} \quad field and  $\sum\limits_{i=1}^{m}\big[f_{i}\zeta_{B}(f_{i})+f_{i}^2\pi(\zeta_{B})\big]\|X_{i}\|_{i}^2
-\sum\limits_{i=1}^{m}f_{i}^2\pi(X_{B})g_{i}(X_{i},\zeta_{i})=0,$ for any $i\in\{1,\cdots,m\}.$
\end{Proposition}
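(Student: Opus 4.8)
The plan is to read everything off Corollary 4.5 together with the characterization in Lemma 3.9: $\zeta$ is a semi-symmetric metric Killing vector field exactly when $g(\overline\nabla_X\zeta,X)=0$ for every $X\in\Gamma(TM)$. So in each of the five cases I would substitute the given decomposition $\zeta=\zeta_B+\sum_i\zeta_i$ (with the appropriate components set to zero) into the right-hand side of Corollary 4.5 and verify that it collapses to $0$ for all $X=X_B+\sum_i X_i$. The whole argument is thus a bookkeeping exercise organized around the four families of summands appearing there: the base term $g_B(\overline\nabla^B_{X_B}\zeta_B,X_B)$, the fiber terms $f_i^2 g_i(\nabla^i_{X_i}\zeta_i,X_i)$, the warping terms $[f_i\zeta_B(f_i)+f_i^2\pi(\zeta_B)]\|X_i\|_i^2$, and the mixed terms $f_i^2\pi(X_B)g_i(X_i,\zeta_i)$.

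First I would dispose of the ``pure'' cases. In case (1) every $\zeta_i$ vanishes, so the fiber and mixed terms disappear; the base term vanishes because $\zeta_B$ is semi-symmetric metric Killing on $B$ (Lemma 3.9 applied on $(B,g_B,\overline\nabla^B)$), and the hypothesis $\sum_i[f_i\zeta_B(f_i)+f_i^2\pi(\zeta_B)]\|X_i\|_i^2=0$ removes the warping sum. In case (2) only $\zeta_i$ survives, so $\zeta_B=0$ forces the base and all warping terms to vanish; the single fiber term vanishes because $\zeta_i$ is Killing on $(M_i,g_i)$, and the hypothesis $\pi(X_B)g_i(X_i,\zeta_i)=0$ kills the one remaining mixed term. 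Case (4) follows the same pattern with $\zeta_B=0$ but all $\zeta_i$ present: the warping sum drops out, the Killing hypotheses annihilate every fiber term, and $\sum_i f_i^2\pi(X_B)g_i(X_i,\zeta_i)=0$ removes the mixed sum.

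Case (5) is the general statement of which the others are degenerations, and I would treat it directly: $\zeta_B$ being semi-symmetric metric Killing kills the base term, each $\zeta_i$ being Killing kills all fiber terms, and the single combined hypothesis $\sum_i[f_i\zeta_B(f_i)+f_i^2\pi(\zeta_B)]\|X_i\|_i^2-\sum_i f_i^2\pi(X_B)g_i(X_i,\zeta_i)=0$ annihilates the warping and mixed sums simultaneously; invoking Lemma 3.9 then gives the conclusion.

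The delicate point, and the step I expect to need the most care, is case (3), where $\zeta=\zeta_B+\zeta_i$ keeps $\zeta_B$ active but retains only one fiber component. There the mixed sum reduces to its $i$-th term, and multiplying the displayed hypothesis by $f_i$ shows that this term combines exactly with the $i$-th warping term $[f_i\zeta_B(f_i)+f_i^2\pi(\zeta_B)]\|X_i\|_i^2$ to cancel. What must still be justified is that the off-index warping terms $[f_j\zeta_B(f_j)+f_j^2\pi(\zeta_B)]\|X_j\|_j^2$ with $j\neq i$ do not survive, since $\zeta_B$ is still present; this is precisely where the multiply warped setting is tighter than the single-warp analogue (Proposition 3.17), and I would make explicit the condition forcing these contributions to vanish before concluding via Lemma 3.9.
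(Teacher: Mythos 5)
Your approach is exactly the paper's: Proposition 4.7 is presented as an immediate consequence of Corollary 4.5 combined with the criterion $g(\overline\nabla_{X}\zeta,X)=0$ of Lemma 3.9, and your case-by-case bookkeeping of the four families of summands (base, fiber, warping, mixed) is the whole of the intended argument. Cases (1), (2), (4) and (5) are handled correctly and completely.

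The reservation you raise about case (3) is well founded, and it is a defect of the statement rather than of your method, so you are right not to paper over it. With $\zeta=\zeta_{B}+\zeta_{i}$, once the base term is removed because $\zeta_{B}$ is semi-symmetric metric Killing on $B$ and the fiber term because $\zeta_{i}$ is Killing on $M_{i}$, Corollary 4.5 leaves
\[
g(\overline\nabla_{X}\zeta,X)=\sum_{j=1}^{m}\big[f_{j}\zeta_{B}(f_{j})+f_{j}^{2}\pi(\zeta_{B})\big]\|X_{j}\|_{j}^{2}-f_{i}^{2}\pi(X_{B})g_{i}(X_{i},\zeta_{i}).
\]
The displayed hypothesis in (3), multiplied by $f_{i}$, cancels only the $j=i$ warping term against the mixed term; taking $X=X_{j}$ with $j\neq i$ and $\|X_{j}\|_{j}^{2}\neq0$ shows that the remaining terms survive unless $\zeta_{B}(f_{j})+f_{j}\pi(\zeta_{B})=0$ for every $j\neq i$. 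Equivalently, the correct hypothesis is the one obtained from case (5) by setting $\zeta_{j}=0$ for $j\neq i$, namely $\sum_{j=1}^{m}\big[f_{j}\zeta_{B}(f_{j})+f_{j}^{2}\pi(\zeta_{B})\big]\|X_{j}\|_{j}^{2}-f_{i}^{2}\pi(X_{B})g_{i}(X_{i},\zeta_{i})=0$, which keeps the full warping sum. So add that condition explicitly, as you propose; it does not follow from what is given, and the single-warp Proposition 3.17(3) offers no guidance here precisely because it has only one fiber.
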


\begin{Proposition}
Let $M=B\times _{f_{1}}M_{1}\times _{f_{2}}M_{2}\times\cdots\times _{f_{m}}M_{m}$ be a multiply warped product with $\overline\nabla,\,P\in\Gamma(TB),\,\zeta=\zeta_{B}+\sum\limits_{i=1}^{m}\zeta_{i}$ is a semi-symmetric metric Killing vector field. Then:\\
$(1)\,\zeta_{B}$ is a semi-symmetric metric Killing vector field on $B.$\\
$(2)$ each $\zeta_{i}$ is a Killing vector field on $M_{i}$ if $\sum\limits_{i=1}^{m}\big[f_{i}\zeta_{B}(f_{i})+f_{i}^2\pi(\zeta_{B})\big]\|X_{i}\|_{i}^2=0.$
\end{Proposition}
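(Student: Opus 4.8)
The plan is to extract both statements directly from the identity $(17)$ of Corollary 4.5, combined with the characterizations of (semi-symmetric metric) Killing fields in Lemmas 3.7 and 3.8. Because $\zeta$ is assumed to be a semi-symmetric metric Killing vector field, equation $(9)$ gives $g(\overline\nabla_{X}\zeta,X)=0$ for every $X\in\Gamma(TM)$; hence the whole right-hand side of $(17)$ vanishes identically in $X$. The idea is then to \emph{probe} this identity by letting $X$ lie in a single factor of the product, which isolates one group of terms at a time.

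For part $(1)$ I would take $X=X_{B}\in\Gamma(TB)$, that is, set every fiber component $X_{i}=0$. Each of the three sums on the right of $(17)$ carries an explicit factor $X_{i}$ or $\|X_{i}\|_{i}^2$ and so drops out, leaving $g_{B}(\overline\nabla^{B}_{X_{B}}\zeta_{B},X_{B})=0$. Since this holds for every $X_{B}\in\Gamma(TB)$, Lemma 3.8 applied to the base $(B,g_{B},\overline\nabla^{B})$ shows at once that $\zeta_{B}$ is a semi-symmetric metric Killing vector field on $B$.

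For part $(2)$ I would fix an index $j$ and probe with $X=X_{j}\in\Gamma(TM_{j})$, so that $X_{B}=0$ and $X_{i}=0$ for $i\neq j$. Then the base term vanishes (since $X_{B}=0$), the factor $\pi(X_{B})=0$ annihilates the cross term $\sum_{i}f_{i}^{2}\pi(X_{B})g_{i}(X_{i},\zeta_{i})$, and only the $i=j$ contribution survives in the two remaining sums, so $(17)$ reduces to
\[
0=f_{j}^{2}\,g_{j}(\nabla^{j}_{X_{j}}\zeta_{j},X_{j})+\big[f_{j}\zeta_{B}(f_{j})+f_{j}^{2}\pi(\zeta_{B})\big]\|X_{j}\|_{j}^2.
\]
Reading the standing hypothesis $\sum_{i=1}^{m}\big[f_{i}\zeta_{B}(f_{i})+f_{i}^{2}\pi(\zeta_{B})\big]\|X_{i}\|_{i}^2=0$ with this $X=X_{j}$ removes the second term; as $f_{j}>0$, we are left with $g_{j}(\nabla^{j}_{X_{j}}\zeta_{j},X_{j})=0$ for all $X_{j}$, and equation $(8)$ of Lemma 3.7 identifies $\zeta_{j}$ as a Killing vector field on $M_{j}$.

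With $(17)$ in hand the computation is essentially bookkeeping, so I do not expect a serious obstacle. The two points that require care are: first, interpreting the standing hypothesis in $(2)$ as an identity valid for all $X$, which is what licenses restricting to a single factor and extracting the term-by-term relation $\big[f_{j}\zeta_{B}(f_{j})+f_{j}^{2}\pi(\zeta_{B})\big]\|X_{j}\|_{j}^2=0$; and second, the cross term $\pi(X_{B})g_{i}(X_{i},\zeta_{i})$, the only genuine coupling between base and fiber directions in $(17)$, which must be suppressed before the factors separate — this is handled automatically by the fiber-only probe $X_{B}=0$. (The hypothesis $P\in\Gamma(TB)$ itself has already been used upstream, in producing the particular shape of $(17)$ via Lemma 4.1 and Proposition 4.3.)
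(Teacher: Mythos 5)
Your argument is correct and is exactly the route the paper intends: the paper states this result immediately after Corollary 4.5 with no separate proof, the implicit derivation being precisely your restriction of identity $(17)$ (with $g(\overline\nabla_{X}\zeta,X)=0$ from Lemma 3.8) to $X=X_{B}$ for part $(1)$ and to a single fiber direction $X=X_{j}$ for part $(2)$. Your two points of care — reading the hypothesis in $(2)$ as an identity in $X$ so the $j$-th term can be isolated, and noting that the fiber-only probe kills the cross term via $\pi(X_{B})=0$ — are the right ones, and nothing is missing.
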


By Corollary 4.6, we can easily get:
\begin{Proposition}
Let $M=B\times _{f_{1}}M_{1}\times _{f_{2}}M_{2}\times\cdots\times _{f_{m}}M_{m}$ be a multiply warped product with $\overline\nabla,\,P\in\Gamma(TM_{r})$ for a fixed $r,\,\zeta=\zeta_{B}+\sum\limits_{i=1}^{m}\zeta_{i}\in\Gamma(TM)$. Then $\zeta$ is a semi-symmetric metric Killing vector field if one of the following conditions holds:\\
$(1)\,\zeta=\zeta_{B},\,\zeta_{B}$ is a Killing vector field, and $\sum\limits_{i=1}^{m}\big[f_{i}\zeta_{B}(f_{i})\|X_{i}\|_{i}^2-\pi(X_{i})g_{B}(X_{B},Y_{B})\big]=0.$\\
$(2)\,\zeta=\zeta_{i}$ for a fixed $i\neq r,\,\zeta_{i}$ is a  Killing vector field ;\\
\mbox{}\;\;\;\,   $\zeta=\zeta_{i}$ for a fixed $i=r,\,\zeta_{i}$ is a  Killing vector field and
    $\pi(\zeta_{i})\|X_{i}\|_{i}^2-\pi(X_{i})g(X_{i},\zeta_{i})=0.$\\
$(3)\,\zeta=\zeta_{B}+\zeta_{i}$ for a fixed $i\neq r,\,\zeta_{B}$ is a Killing vector field, $\zeta_{i}$ is a  Killing vector field and $\zeta_{B}(f_{i})=0.$\\
\mbox{}\;\;\; $\,\zeta=\zeta_{B}+\zeta_{i}$ for a fixed $i=r,\,\zeta_{B}$ is a Killing vector field,
$\zeta_{i}$ is a  Killing vector field, and \\  \mbox{}\;\;\;\,
$\zeta_{B}(f_{i})\|X_{i}\|_{i}^2+f_{i}\pi(\zeta_{i})\|X_{i}\|_{i}^2-f_{i}\pi(X_{i})g_{i}(X_{i},\zeta_{i})=0.$\\
$(4)\,\zeta=\sum\limits_{i=1}^{m}\zeta_{i},$ each $\zeta_{i}$ is a Killing vector field, and  $\sum\limits_{i=1}^{m}f_{i}^2\big[\pi(\zeta_{i})\|X_{i}\|_{i}^2-\pi(X_{i})g_{i}(X_{i},\zeta_{i})\big]=0,$ for \\ \mbox{}\;\;\;\, $i\in\{1,\cdots,m\}.$\\
$(5)\,\zeta=\zeta_{B}+\sum\limits_{i=1}^{m}\zeta_{i},\,\zeta_{B}$ is a Killing vector field, each $\zeta_{i}$ is a  Killing vector field, and \\ \mbox{}\;\;\, $\sum\limits_{i=1}^{m}\big[f_{i}\zeta_{B}(f_{i})\|X_{i}\|_{i}^2+\pi(\zeta_{i})\|X\|^2-\pi(X_{i})g(X,\zeta)\big]=0.$
\end{Proposition}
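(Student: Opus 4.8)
The plan is to read the whole proposition off the single master identity of Corollary 4.6 combined with the characterization of Lemma 3.8, which says that $\zeta$ is a semi-symmetric metric Killing vector field exactly when $g(\overline\nabla_{X}\zeta,X)=0$ for every $X\in\Gamma(TM)$. Thus everything reduces to inspecting
\[
g(\overline\nabla_{X}\zeta,X)=g_{B}(\nabla^B_{X_{B}}\zeta_{B},X_{B})+\sum_{i=1}^{m}f_{i}^{2}g_{i}(\nabla^i_{X_{i}}\zeta_{i},X_{i})+\sum_{i=1}^{m}\pi(\zeta_{i})\|X\|^2+\sum_{i=1}^{m}f_{i}\zeta_{B}(f_{i})\|X_{i}\|_{i}^2-\sum_{i=1}^{m}\pi(X_{i})g(X,\zeta),
\]
and proving that under each listed hypothesis the right-hand side vanishes identically in $X$.

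First I would record the simplification forced by $P\in\Gamma(TM_{r})$. Because $\pi(V)=g(V,P)$ and distinct factors are mutually $g$-orthogonal, we get $\pi(\zeta_{B})=0$ and $\pi(\zeta_{i})=\pi(X_{i})=0$ for every $i\neq r$; only the $r$-th terms of the two $\pi$-sums survive. It is precisely this asymmetry that produces the split ``$i\neq r$ versus $i=r$'' in cases (2) and (3): when the retained fiber component of $\zeta$ sits outside the $r$-th slot the $\pi$-contributions disappear, whereas in the $r$-th slot they persist and leave behind the extra scalar condition quoted in the statement. I would also point out here that, since the base term in Corollary 4.6 already uses the Levi-Civita connection $\nabla^{B}$ (the piece $-g_{B}(X_{B},Y_{B})P$ having been killed by $g_{B}(\,\cdot\,,P)=0$), the words ``$\zeta_{B}$ is a Killing vector field'' throughout this proposition mean Killing for the ordinary Lie derivative on $B$, in contrast with the $P\in\Gamma(TB)$ situation of Proposition 4.7.

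The second step handles the cases one at a time. In each case I would set to zero the components of $\zeta$ that are absent, apply Lemma 3.7 (the identity $g(\nabla_{X}\zeta,X)=0$ for a Killing field) to erase each intrinsic term $g_{B}(\nabla^B_{X_{B}}\zeta_{B},X_{B})$ or $f_{i}^{2}g_{i}(\nabla^i_{X_{i}}\zeta_{i},X_{i})$ belonging to a component assumed Killing, and then collect the leftover warping and $\pi$ terms. The residue is exactly the scalar quantity that the stated hypothesis declares to be zero, so the implication ``hypotheses $\Rightarrow$ residue $=0$ $\Rightarrow g(\overline\nabla_{X}\zeta,X)=0$'' closes each case. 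Since case (5) is the fully general one, I would establish it first and then obtain (1)--(4) simply by switching off the corresponding summands.

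The main obstacle is bookkeeping rather than any real difficulty: one has to keep track of which $\pi(\zeta_{i})$ and $\pi(X_{i})$ remain, expand both $g(X,\zeta)$ and $\|X\|^2$ into their base and fiber pieces with the correct factors $f_{i}^{2}$, and treat the $i=r$ subcase separately from $i\neq r$. The genuinely delicate point is checking that the surviving $r$-th $\pi$-terms recombine into precisely the expressions written in the statement (for example the combination $\pi(\zeta_{i})\|X_{i}\|_{i}^2-\pi(X_{i})g(X_{i},\zeta_{i})$ in the $i=r$ part of (2)); matching warping factors and distinguishing the full norm $\|X\|^2$ from the fiber norm $\|X_{i}\|_{i}^2$ is exactly where a factor $f_{i}^{2}$ or a term is most easily mislaid.
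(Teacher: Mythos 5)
Your proposal is correct and follows essentially the same route as the paper, which gives no written argument beyond the pointer ``By Corollary 4.6, we can easily get'': namely, combine the master identity of Corollary 4.6 with the characterization $g(\overline\nabla_{X}\zeta,X)=0$ of Lemma 3.8, use orthogonality of the factors to kill $\pi$ on all components outside the $r$-th slot (which is exactly what produces the $i=r$ versus $i\neq r$ split), and erase the intrinsic terms via Lemma 3.7 for each component assumed Killing. The remaining work is the bookkeeping you describe, so there is nothing to add.
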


\begin{Proposition}
Let $M=B\times _{f_{1}}M_{1}\times _{f_{2}}M_{2}\times\cdots\times _{f_{m}}M_{m}$ be a multiply warped product with $\overline\nabla,\,P\in\Gamma(TM_{r})$ for a fixed $r,\,\zeta=\zeta_{B}+\sum\limits_{i=1}^{m}\zeta_{i}$ is a semi-symmetric metric Killing vector field, then:\\
$(1)\,\zeta_{B}$ is a Killing vector field on $B$ if $\sum\limits_{i=1}^{m}\pi(\zeta_{i})=0.$\\
$(2)$ each $\zeta_{i}$ is a Killing vector field on $M_{i}$ if $\sum\limits_{i=1}^{m}\pi(\zeta_{i})=0$ and
$\sum\limits_{i=1}^{m}\big[f_{i}\zeta_{B}(f_{i})\|X_{i}\|_{i}^2-\pi(X_{i})g(X,\zeta)\big]=0.$
\end{Proposition}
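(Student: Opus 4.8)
The plan is to push the semi-symmetric metric Killing hypothesis through Corollary 4.6 and then isolate the base and fibre parts by specializing the test field $X$. Since $\zeta$ is a semi-symmetric metric Killing vector field, Lemma 3.8 gives $g(\overline\nabla_{X}\zeta,X)=0$ for every $X\in\Gamma(TM)$, and combining this with Corollary 4.6 produces the single identity
$$0=g_{B}(\nabla^B_{X_{B}}\zeta_{B},X_{B})+\sum_{i=1}^{m}f_{i}^{2}g_{i}(\nabla^i_{X_{i}}\zeta_{i},X_{i})+\Big(\sum_{i=1}^{m}\pi(\zeta_{i})\Big)\|X\|^2+\sum_{i=1}^{m}f_{i}\zeta_{B}(f_{i})\|X_{i}\|_{i}^2-\sum_{i=1}^{m}\pi(X_{i})g(X,\zeta),$$
valid for all $X$. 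The entire argument consists of feeding suitable $X$ into this equation.

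For (1) I would take $X=X_{B}\in\Gamma(TB)$, that is, all fibre components zero. Then every summand carrying some $X_i$ drops out and $\|X\|^2=\|X_{B}\|_{B}^2$, so the identity reduces to $0=g_{B}(\nabla^B_{X_{B}}\zeta_{B},X_{B})+\big(\sum_{i}\pi(\zeta_{i})\big)\|X_{B}\|_{B}^2$. Under the hypothesis $\sum_{i}\pi(\zeta_{i})=0$ the last term vanishes, leaving $g_{B}(\nabla^B_{X_{B}}\zeta_{B},X_{B})=0$ for every $X_{B}$. Applying Lemma 3.7 to the triple $(B,g_{B},\nabla^{B})$ then shows that $\zeta_{B}$ is a Killing vector field on $B$.

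For (2), fix an index $k$ and take $X=X_{k}\in\Gamma(TM_{k})$ with all other components zero. Now the base term and every cross-fibre term ($i\neq k$) vanish, and the identity collapses to $0=f_{k}^{2}g_{k}(\nabla^k_{X_{k}}\zeta_{k},X_{k})+\big(\sum_{i}\pi(\zeta_{i})\big)\|X_{k}\|^2+f_{k}\zeta_{B}(f_{k})\|X_{k}\|_{k}^2-\pi(X_{k})g(X_{k},\zeta)$. The hypothesis $\sum_{i}\pi(\zeta_{i})=0$ kills the third summand, while the second hypothesis, evaluated on this single-fibre field (for which $\|X_{i}\|_{i}^2=0$ and $\pi(X_{i})=0$ whenever $i\neq k$), reduces to exactly $f_{k}\zeta_{B}(f_{k})\|X_{k}\|_{k}^2-\pi(X_{k})g(X_{k},\zeta)=0$ and cancels the remaining warping contribution. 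What survives is $f_{k}^{2}g_{k}(\nabla^k_{X_{k}}\zeta_{k},X_{k})=0$; since $f_{k}>0$ this forces $g_{k}(\nabla^k_{X_{k}}\zeta_{k},X_{k})=0$ for all $X_{k}$, and Lemma 3.7 applied to $(M_{k},g_{k},\nabla^{k})$ gives that $\zeta_{k}$ is Killing on $M_{k}$. As $k$ was arbitrary, each $\zeta_{i}$ is Killing.

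The computations are routine; the step requiring the most care is the bookkeeping of which summands survive each specialization of $X$. In particular, the delicate point is verifying that the global second hypothesis of (2), once restricted to a field supported on a single fibre $M_{k}$, collapses to precisely the single-index relation $f_{k}\zeta_{B}(f_{k})\|X_{k}\|_{k}^2-\pi(X_{k})g(X_{k},\zeta)=0$ needed to cancel the warping terms, and in confirming that the restricted test fields range over all of $\Gamma(TB)$ and $\Gamma(TM_{k})$ respectively, so that Lemma 3.7 may legitimately be invoked on the factor manifolds.
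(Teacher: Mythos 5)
Your proposal is correct and follows exactly the route the paper intends: the paper offers no explicit proof beyond deriving Proposition 4.9 from Corollary 4.6, and your argument---setting $g(\overline\nabla_X\zeta,X)=0$ via Lemma 3.8, then specializing $X$ to $X_B$ and to a single fibre component $X_k$ and invoking Lemma 3.7 on each factor---is precisely that derivation carried out in detail. The bookkeeping of which terms survive each specialization is accurate, including the reduction of the second hypothesis of (2) to the single-index relation on $M_k$.
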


\section{\large Killing Vector Fields on Multiply Warped Products}
Notice that when $P=0,$ the semi-symmetric metric Lie derivative becomes to Lie derivative, and the semi-symmetric metric Killing vector field becomes to Killing vector field. So the following four results are special cases of Section 4 when $P=0.$

\begin{Proposition}
Let $M=B\times _{f_{1}}M_{1}\times _{f_{2}}M_{2}\times\cdots\times _{f_{m}}M_{m}$ be a multiply warped product, $\zeta\in\Gamma(TM).$ Then for any vector fields $X,Y\in\Gamma(TM),$ we have:
$$(\mathcal{L}_{\zeta}g)(X,Y)=(\mathcal{L}^{B}_{\zeta_{B}}g_{B})(X_{B},Y_{B})
+\sum\limits_{i=1}^{m}f_{i}^{2}(\mathcal{L}^{i}_{\zeta_{i}}g_{i})(X_{i},Y_{i})
+2\sum\limits_{i=1}^{m}f_{i}\zeta_{B}(f_{i})g_{i}(X_{i},Y_{i}).  \eqno{(18)}$$
\end{Proposition}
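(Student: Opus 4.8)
The plan is to obtain this identity as the $P=0$ specialization of Proposition 4.3, exactly as the sentence preceding the statement signals. First I would observe that once $P=0$, the associated $1$-form vanishes identically: by definition $\pi(X)=g(X,P)=0$ for every $X\in\Gamma(TM)$. Feeding $P=0$ into the defining formula $(3)$ gives $\overline{\nabla}_X Y=\nabla_X Y+\pi(Y)X-g(X,Y)P=\nabla_X Y$, so the semi-symmetric metric connection collapses onto the Levi-Civita connection $\nabla$. Comparing the defining relations $(4)$ and $(6)$ then yields $\overline{\mathcal{L}}_\zeta g=\mathcal{L}_\zeta g$; this is precisely the content of Remark 3.11, which I would cite directly.

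With these reductions in hand, I would substitute into the right-hand side of Proposition 4.3. The base term $(\overline{\mathcal{L}}^{B}_{\zeta_{B}}g_{B})(X_{B},Y_{B})$ becomes $(\mathcal{L}^{B}_{\zeta_{B}}g_{B})(X_{B},Y_{B})$, since the induced semi-symmetric structure on $B$ is governed by the same field $P\in\Gamma(TB)$, which is now zero, so the base connection is Levi-Civita as well. The fiber terms $\sum_{i=1}^{m}f_{i}^{2}(\mathcal{L}^{i}_{\zeta_{i}}g_{i})(X_{i},Y_{i})$ are already written with the ordinary Lie derivative and are untouched. Finally, every $\pi$-dependent contribution in Proposition 4.3 — namely $\sum_{i=1}^{m}2f_{i}^{2}\pi(\zeta_{B})g_{i}(X_{i},Y_{i})$, $\sum_{i=1}^{m}f_{i}^{2}g_{i}(X_{i},\zeta_{i})\pi(Y_{B})$, and $\sum_{i=1}^{m}f_{i}^{2}g_{i}(Y_{i},\zeta_{i})\pi(X_{B})$ — vanishes outright. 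What remains is exactly $(\mathcal{L}^{B}_{\zeta_{B}}g_{B})(X_{B},Y_{B})+\sum_{i=1}^{m}f_{i}^{2}(\mathcal{L}^{i}_{\zeta_{i}}g_{i})(X_{i},Y_{i})+2\sum_{i=1}^{m}f_{i}\zeta_{B}(f_{i})g_{i}(X_{i},Y_{i})$, the asserted formula.

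There is essentially no obstacle in this argument; the only point deserving a moment's attention is the reduction of the base contribution $\overline{\mathcal{L}}^{B}_{\zeta_{B}}g_{B}$ to $\mathcal{L}^{B}_{\zeta_{B}}g_{B}$, where one must confirm that the semi-symmetric data on $B$ is driven by the vanishing $P$ rather than by an independently chosen vector field. Should a self-contained derivation be preferred over invoking Proposition 4.3, the identity follows just as directly from Lemma 3.3: writing $(\mathcal{L}_{\zeta}g)(X,Y)=g(\nabla_{X}\zeta,Y)+g(\nabla_{Y}\zeta,X)$, decomposing $X,Y,\zeta$ into their $B$- and $M_{i}$-parts, and applying the Levi-Civita connection formulas of a multiply warped product (Lemma 4.1 with $P=0$) reproduces the three surviving terms once the mutual orthogonality of the factor distributions kills all cross terms.
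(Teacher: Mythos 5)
Your proposal is correct and follows exactly the route the paper itself takes: Section 5 explicitly states that these results are the $P=0$ special cases of Section 4, and setting $\pi=0$ in Proposition 4.3 kills precisely the terms you identify, leaving equation (18). The extra care you take over the base term $\overline{\mathcal{L}}^{B}_{\zeta_{B}}g_{B}\to\mathcal{L}^{B}_{\zeta_{B}}g_{B}$ and the sketched self-contained alternative via Lemma 6.7 are sound but not needed beyond what the paper does.
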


\begin{Corollary}
Let $M=B\times _{f_{1}}M_{1}\times _{f_{2}}M_{2}\times\cdots\times _{f_{m}}M_{m}$ be a multiply warped product, $\zeta\in\Gamma(TM).$ Then for any vector field $X\in\Gamma(TM),$ we have:
$$g(\nabla_{X}\zeta,X)=g_{B}(\nabla^B_{X_{B}}\zeta_{B},X_{B})
+\sum\limits_{i=1}^{m}f_{i}^{2}g_{i}(\nabla^i_{X_{i}}\zeta_{i},X_{i})
+\sum\limits_{i=1}^{m}f_{i}\zeta_{B}(f_{i})\|X_{i}\|_{i}^2.   \eqno{(19)}$$
\end{Corollary}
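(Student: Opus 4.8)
The plan is to derive (19) directly from the Lie-derivative formula (18) of the preceding proposition, using only the polarization identity (4). Since the corollary is exactly the $P=0$ specialization of Corollary 4.5, an alternative would be to set $\pi=0$ and $\overline\nabla=\nabla$ there and read off the result; but given that Proposition 5.1 sits immediately above, the cleaner route is the self-contained symmetrization argument from (18).

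First I would record the pointwise consequence of (4): putting $Y=X$ in $(\mathcal{L}_{\zeta}g)(X,Y)=g(\nabla_{X}\zeta,Y)+g(\nabla_{Y}\zeta,X)$ yields
$$g(\nabla_{X}\zeta,X)=\tfrac{1}{2}(\mathcal{L}_{\zeta}g)(X,X).$$
The very same identity applies verbatim on each factor manifold with its own Levi-Civita connection, so that $(\mathcal{L}^{B}_{\zeta_{B}}g_{B})(X_{B},X_{B})=2\,g_{B}(\nabla^{B}_{X_{B}}\zeta_{B},X_{B})$ and $(\mathcal{L}^{i}_{\zeta_{i}}g_{i})(X_{i},X_{i})=2\,g_{i}(\nabla^{i}_{X_{i}}\zeta_{i},X_{i})$ for each $i\in\{1,\dots,m\}$.

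Next I would substitute $Y=X$ into (18). Using the decompositions $X=X_{B}+\sum_{i}X_{i}$ and $\zeta=\zeta_{B}+\sum_{i}\zeta_{i}$, equation (18) specializes to
$$(\mathcal{L}_{\zeta}g)(X,X)=(\mathcal{L}^{B}_{\zeta_{B}}g_{B})(X_{B},X_{B})+\sum_{i=1}^{m}f_{i}^{2}(\mathcal{L}^{i}_{\zeta_{i}}g_{i})(X_{i},X_{i})+2\sum_{i=1}^{m}f_{i}\zeta_{B}(f_{i})\,g_{i}(X_{i},X_{i}).$$
Feeding in the three factorwise polarization identities from the previous step, together with $g_{i}(X_{i},X_{i})=\|X_{i}\|_{i}^{2}$, the right-hand side becomes exactly twice the right-hand side of (19). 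Dividing through by the factor of $2$ supplied by the left-hand side then produces (19).

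I do not expect a genuine obstacle here: all the real content was already absorbed into Proposition 5.1, whose derivation disposed of the mixed base-fiber and fiber-fiber cross terms (via parts (2)--(5) of Lemma 4.1 with $P=0$). The only point requiring care is the bookkeeping of the orthogonal splitting $g=g_{B}\oplus\bigoplus_{i}f_{i}^{2}g_{i}$, which is what legitimizes applying (4) factor by factor; once Proposition 5.1 is granted, the corollary is a one-line polarization and no estimate or new geometric input is needed.
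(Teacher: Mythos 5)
Your argument is correct and matches the paper's route: the paper obtains this corollary exactly by specializing equation $(4)$ to $Y=X$ (so that $(\mathcal{L}_{\zeta}g)(X,X)=2g(\nabla_{X}\zeta,X)$ on $M$ and on each factor) and substituting into equation $(18)$ of Proposition 5.1, which is the same diagonal-restriction computation you carry out. No gap to report.
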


\begin{Proposition}
Let $M=B\times _{f_{1}}M_{1}\times _{f_{2}}M_{2}\times\cdots\times _{f_{m}}M_{m}$ be a multiply warped product,$\,\zeta=\zeta_{B}+\sum\limits_{i=1}^{m}\zeta_{i}\in\Gamma(TM)$. Then $\zeta$ is a Killing vector field if one of the following conditions holds:\\
$(1)\,\zeta=\zeta_{B},\,\zeta_{B}$ is a Killing vector field, and $\sum\limits_{i=1}^{m}f_{i}\zeta_{B}(f_{i})\|X_{i}\|_{i}^2=0.$\\
$(2)\,\zeta=\zeta_{i}$ for a fixed $i,\,\zeta_{i}$ is a  Killing vector field.\\
$(3)\,\zeta=\zeta_{B}+\zeta_{i}$ for a fixed $i,\,\zeta_{B}$ is a Killing vector field, $\zeta_{i}$ is a Killing vector field, and $\zeta_{B}(f_{i})=0.$\\
$(4)\,\zeta=\sum\limits_{i=1}^{m}\zeta_{i},$ each $\zeta_{i}$ is a Killing vector field.\\
$(5)\,\zeta=\zeta_{B}+\sum\limits_{i=1}^{m}\zeta_{i},\,\zeta_{B}$ is a Killing vector field, each $\zeta_{i}$ is a  Killing vector field,$\,\sum\limits_{i=1}^{m}f_{i}\zeta_{B}(f_{i})\|X_{i}\|_{i}^2=0.$
\end{Proposition}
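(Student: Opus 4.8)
The plan is to combine the pointwise identity of Corollary 5.2 with the Killing criterion of Lemma 3.7. Recall that, by Lemma 3.7, a vector field $\eta$ on any pseudo-Riemannian manifold is Killing precisely when $g(\nabla_{Z}\eta,Z)=0$ for all $Z$; applied on $M$ this reduces the problem, in each of the five cases, to showing that the right-hand side of $(19)$,
$$g(\nabla_{X}\zeta,X)=g_{B}(\nabla^B_{X_{B}}\zeta_{B},X_{B})+\sum_{i=1}^{m}f_{i}^{2}g_{i}(\nabla^i_{X_{i}}\zeta_{i},X_{i})+\sum_{i=1}^{m}f_{i}\zeta_{B}(f_{i})\|X_{i}\|_{i}^2,$$
vanishes identically in $X=X_{B}+\sum_{i}X_{i}$. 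I would therefore organize the argument around the three blocks of terms on the right: the base term, the fiber sum, and the warping sum.

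The first two blocks are handled uniformly by Lemma 3.7 applied on the factors. Whenever $\zeta_{B}$ is assumed Killing on $(B,g_{B})$ the base term $g_{B}(\nabla^B_{X_{B}}\zeta_{B},X_{B})$ vanishes, and whenever $\zeta_{i}$ is assumed Killing on $(M_{i},g_{i})$ the summand $f_{i}^{2}g_{i}(\nabla^i_{X_{i}}\zeta_{i},X_{i})$ vanishes; if a component is taken to be zero (as with $\zeta_{B}$ in $(2)$ and $(4)$, or the fiber components $\zeta_{j}$ with $j\neq i$ in $(2)$ and $(3)$) then both its connection contribution and its share of the sums drop out automatically. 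After these reductions the only term that can survive is the warping sum $\sum_{i}f_{i}\zeta_{B}(f_{i})\|X_{i}\|_{i}^2$, so each case collapses to the question of whether that sum is zero.

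Dispatching four of the five cases is then immediate. In $(2)$ and $(4)$ one has $\zeta_{B}=0$, so $\zeta_{B}(f_{i})=0$ for every $i$ and the warping sum vanishes outright. In $(1)$ and $(5)$ the vanishing of the warping sum is exactly the stated hypothesis $\sum_{i}f_{i}\zeta_{B}(f_{i})\|X_{i}\|_{i}^2=0$, so nothing further is needed. In each of these four cases $g(\nabla_{X}\zeta,X)=0$ for all $X$, and Lemma 3.7 finishes the argument.

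The delicate case, and the step I expect to be the main obstacle, is $(3)$: here $\zeta=\zeta_{B}+\zeta_{i}$ with $\zeta_{B}$ and $\zeta_{i}$ both Killing, the fiber sum collapses to its single nonzero term $f_{i}^{2}g_{i}(\nabla^i_{X_{i}}\zeta_{i},X_{i})=0$, but the warping sum still runs over all factors, namely $\sum_{k}f_{k}\zeta_{B}(f_{k})\|X_{k}\|_{k}^2$. Since $X$ is arbitrary and the norms $\|X_{k}\|_{k}^2$ are mutually independent, the single hypothesis $\zeta_{B}(f_{i})=0$ only annihilates the $k=i$ summand; to close the argument one must either read $(3)$ as taking place in the single warped product $B\times_{f_{i}}M_{i}$, where the warping sum has but one term, or else impose that $\zeta_{B}$ annihilate every warping function. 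This is the one place where the bookkeeping cannot be driven by hypotheses on the index $i$ alone, and it is where I would concentrate the careful verification.
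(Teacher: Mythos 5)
Your argument is essentially the paper's own: the proposition is stated there without proof as the $P=0$ specialization of Proposition 4.7, which rests on exactly the reduction you describe, namely the identity $(19)$ of Corollary 5.2 combined with the criterion $g(\nabla_{X}\zeta,X)=0$ of Lemma 3.7, with the base and fiber blocks killed by the factorwise Killing hypotheses. Your reservation about case $(3)$ is well taken and is a defect of the statement rather than of your proof: the hypothesis $\zeta_{B}(f_{i})=0$ for the single fixed $i$ does not annihilate the summands $f_{k}\zeta_{B}(f_{k})\|X_{k}\|_{k}^2$ with $k\neq i$ when $X$ has components in the other fibers, the paper's Proposition 4.7(3) inherits the same gap, and the remedies you propose (work in the two-factor product $B\times_{f_{i}}M_{i}$, or require $\zeta_{B}(f_{k})=0$ for every $k$) are exactly what is needed to close it.
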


\begin{Proposition}
Let $M=B\times _{f_{1}}M_{1}\times _{f_{2}}M_{2}\times\cdots\times _{f_{m}}M_{m}$ be a multiply warped product, $\zeta=\zeta_{B}+\sum\limits_{i=1}^{m}\zeta_{i}$ is a Killing vector field. Then:\\
$(1)\,\zeta_{B}$ is a Killing vector field on $B.$\\
$(2)$ each $\zeta_{i}$ is a Killing vector field on $M_{i}$ if
$\sum\limits_{i=1}^{m}f_{i}\zeta_{B}(f_{i})\|X_{i}\|_{i}^2=0,$ for $i\in\{1,\cdots,m\}.$
\end{Proposition}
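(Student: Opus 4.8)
The plan is to read off the claim directly from the two ingredients already available: the characterization in Lemma 3.7, which says that $\zeta$ is a Killing vector field precisely when $g(\nabla_X\zeta,X)=0$ for every $X\in\Gamma(TM)$, and the pointwise splitting of $g(\nabla_X\zeta,X)$ recorded in Corollary 5.2. Writing $X=X_B+\sum_{i=1}^m X_i$ and using that tangent vectors to the product decompose independently into a base part and fiber parts, the strategy is to feed specially chosen test fields $X$ into Corollary 5.2 so as to kill all but one summand at a time, and then invoke Lemma 3.7 on the relevant factor.

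For part (1) I would take $X=X_B$, i.e.\ set all fiber components $X_i=0$. Then $\|X_i\|_i^2=0$ and $g_i(\nabla^i_{X_i}\zeta_i,X_i)=0$, so Corollary 5.2 collapses to $g(\nabla_X\zeta,X)=g_B(\nabla^B_{X_B}\zeta_B,X_B)$. Since $\zeta$ is Killing the left-hand side vanishes for every $X$, hence $g_B(\nabla^B_{X_B}\zeta_B,X_B)=0$ for all $X_B\in\Gamma(TB)$, and Lemma 3.7 applied on $(B,g_B)$ gives that $\zeta_B$ is a Killing vector field on $B$. For part (2) I would combine part (1) with the standing hypothesis $\sum_{i=1}^m f_i\zeta_B(f_i)\|X_i\|_i^2=0$ to cancel the first and third terms of Corollary 5.2, leaving $\sum_{i=1}^m f_i^2\,g_i(\nabla^i_{X_i}\zeta_i,X_i)=0$ for every $X$. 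Fixing an index $k$ and choosing $X$ with $X_B=0$ and $X_i=0$ for $i\neq k$ reduces this to $f_k^2\,g_k(\nabla^k_{X_k}\zeta_k,X_k)=0$; as $f_k>0$ we obtain $g_k(\nabla^k_{X_k}\zeta_k,X_k)=0$ for all $X_k\in\Gamma(TM_k)$, and a final appeal to Lemma 3.7 on $(M_k,g_k)$ shows $\zeta_k$ is Killing. Since $k$ is arbitrary, this holds for every fiber.

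The point requiring care, rather than a genuine obstacle, is the legitimacy of these component-restricted test fields: I must use that at each point the tangent space splits as the direct sum of the base and fiber tangent spaces underlying the product metric $g=g_B\oplus\sum_i f_i^2 g_i$, so that $X_B$ and each $X_i$ can be prescribed independently. A secondary subtlety in the pseudo-Riemannian setting is that $\|X_i\|_i^2$ may vanish on nonzero null vectors; this causes no trouble, because the conclusions are drawn from the identity $g_i(\nabla^i_{X_i}\zeta_i,X_i)=0$, which holds for all $X_i$ and therefore forces $\zeta_i$ to be Killing regardless of the signature.
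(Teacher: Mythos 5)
Your proof is correct and follows essentially the same route the paper intends: the paper states Proposition 5.4 as the $P=0$ specialization of Proposition 4.8, which itself is read off from the decomposition of $g(\nabla_X\zeta,X)$ (Corollary 4.5, i.e.\ Corollary 5.2 here) together with the characterization of Killing fields in Lemma 3.7. Your use of component-restricted test fields $X=X_B$ and $X=X_k$ simply makes explicit the selection argument the paper leaves implicit, and your remarks on the splitting of the tangent space and on null vectors in the pseudo-Riemannian case are the right points to flag.
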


\section{\large 2-Killing Vector Fields on Multiply Warped Products}
\begin{Definition}\label{l1}$^{[2]}$
Let $M$ be a pseudo-Riemannian manifold, $\zeta\in\Gamma(TM)$ is called a {\bfseries 2-Killing vector field}, if
$$\mathcal{L}_{\zeta}\mathcal{L}_{\zeta}g=0  \eqno{(20)} $$
where $\mathcal{L}_{\zeta}$ is the Lie derivative in the direction of $\zeta$ on $M.$
\end{Definition}

\begin{Proposition}\label{l1}$^{[2]}$
Let $M$ be a pseudo-Riemannian manifold, $\zeta\in\Gamma(TM),$ then:
$$(\mathcal{L}_{\zeta}\mathcal{L}_{\zeta}g)(X,Y)=g(\nabla_{\zeta}\nabla_{X}\zeta-\nabla_{[\zeta,X]}\zeta,Y)
+g(X,\nabla_{\zeta}\nabla_{Y}\zeta-\nabla_{[\zeta,Y]}\zeta)+2g(\nabla_{X}\zeta,\nabla_{Y}\zeta)  \eqno{(21)} $$
for any vector fields $X,Y\in\Gamma(TM).$
\end{Proposition}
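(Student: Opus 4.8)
The plan is to obtain the second-order formula by applying the Lie derivative twice, feeding the first-order expression $(\mathcal{L}_\zeta g)(X,Y)=g(\nabla_X\zeta,Y)+g(\nabla_Y\zeta,X)$ from equation $(4)$ back into itself. The only two structural facts I need are the defining properties of the Levi-Civita connection: metric compatibility $\nabla g=0$, and vanishing torsion, which I will use in the form $[\zeta,X]=\nabla_\zeta X-\nabla_X\zeta$ for every vector field $X$.

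First I would abbreviate $h:=\mathcal{L}_\zeta g$ and recall that the Lie derivative of any $(0,2)$-tensor is computed by
$$(\mathcal{L}_\zeta h)(X,Y)=\zeta\big(h(X,Y)\big)-h([\zeta,X],Y)-h(X,[\zeta,Y]).$$
Substituting the formula for $h$ from $(4)$ into each of these three pieces produces six scalar terms. The term $\zeta(h(X,Y))$ is then expanded using $\nabla g=0$; for example $\zeta\,g(\nabla_X\zeta,Y)=g(\nabla_\zeta\nabla_X\zeta,Y)+g(\nabla_X\zeta,\nabla_\zeta Y)$, and symmetrically with the roles of $X$ and $Y$ exchanged.

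Next I would sort the resulting terms into two groups. The second-order contributions $g(\nabla_\zeta\nabla_X\zeta,Y)$ (from the first piece) and $-g(\nabla_{[\zeta,X]}\zeta,Y)$ (from $-h([\zeta,X],Y)$) combine into $g(\nabla_\zeta\nabla_X\zeta-\nabla_{[\zeta,X]}\zeta,Y)$, which is exactly the first summand of $(21)$; the analogous pairing in the other slot yields the second summand $g(X,\nabla_\zeta\nabla_Y\zeta-\nabla_{[\zeta,Y]}\zeta)$.

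The hard part—and the only place where care is genuinely needed—is the collection of the four remaining terms,
$$g(\nabla_X\zeta,\nabla_\zeta Y)+g(\nabla_Y\zeta,\nabla_\zeta X)-g(\nabla_Y\zeta,[\zeta,X])-g(\nabla_X\zeta,[\zeta,Y]),$$
which must collapse to $2g(\nabla_X\zeta,\nabla_Y\zeta)$. Here I would insert $[\zeta,X]=\nabla_\zeta X-\nabla_X\zeta$ and $[\zeta,Y]=\nabla_\zeta Y-\nabla_Y\zeta$; the $\nabla_\zeta X$ and $\nabla_\zeta Y$ contributions then cancel against the first two terms, leaving $g(\nabla_Y\zeta,\nabla_X\zeta)+g(\nabla_X\zeta,\nabla_Y\zeta)=2g(\nabla_X\zeta,\nabla_Y\zeta)$ by symmetry of $g$. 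This cancellation is precisely where torsion-freeness enters and is what produces the factor $2$; assembling the three groups gives $(21)$.
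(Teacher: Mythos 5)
Your proof is correct and complete. The paper itself offers no proof of this proposition --- it is quoted from reference [2] --- but your derivation is exactly the standard one: iterate the Lie derivative via $(\mathcal{L}_\zeta h)(X,Y)=\zeta\big(h(X,Y)\big)-h([\zeta,X],Y)-h(X,[\zeta,Y])$ with $h=\mathcal{L}_\zeta g$, expand $\zeta\,g(\nabla_X\zeta,Y)$ by metric compatibility, and use torsion-freeness $[\zeta,X]=\nabla_\zeta X-\nabla_X\zeta$ to collapse the four cross terms into $2g(\nabla_X\zeta,\nabla_Y\zeta)$; all the cancellations you describe check out.
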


\begin{Corollary}\label{l1}$^{[2]}$
Let $M$ be a pseudo-Riemannian manifold, $\zeta\in\Gamma(TM),$ then $\zeta$ is a 2-Killing vector field if and only if:
$$R(\zeta,X,X,\zeta)=g(\nabla_{X}\zeta,\nabla_{X}\zeta)+g(\nabla_{X}\nabla_{\zeta}\zeta,X)  \eqno{(22)} $$
for any vector field $X\in\Gamma(TM).$
\end{Corollary}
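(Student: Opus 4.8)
The plan is to reduce the tensorial identity $\mathcal{L}_{\zeta}\mathcal{L}_{\zeta}g=0$ to its diagonal and then re-express the iterated covariant derivative of $\zeta$ through the curvature tensor. First I would note that $\mathcal{L}_{\zeta}\mathcal{L}_{\zeta}g$ is a symmetric $(0,2)$-tensor, so, exactly as in the passage from $(4)$ to $(8)$, it vanishes identically if and only if its associated quadratic form does. Hence $\zeta$ is a 2-Killing vector field if and only if $(\mathcal{L}_{\zeta}\mathcal{L}_{\zeta}g)(X,X)=0$ for every $X\in\Gamma(TM)$, and it suffices to work with $Y=X$ throughout.

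Next I would set $Y=X$ in formula $(21)$ of Proposition 6.2. The two mixed terms then coincide, giving
$$(\mathcal{L}_{\zeta}\mathcal{L}_{\zeta}g)(X,X)=2g\big(\nabla_{\zeta}\nabla_{X}\zeta-\nabla_{[\zeta,X]}\zeta,X\big)+2g(\nabla_{X}\zeta,\nabla_{X}\zeta).$$
The key step is to recognize the first argument as a curvature expression. Using the definition $R(\zeta,X)=\nabla_{\zeta}\nabla_{X}-\nabla_{X}\nabla_{\zeta}-\nabla_{[\zeta,X]}$ recalled in Section~\ref{sec2}, I would write
$$\nabla_{\zeta}\nabla_{X}\zeta-\nabla_{[\zeta,X]}\zeta=R(\zeta,X)\zeta+\nabla_{X}\nabla_{\zeta}\zeta,$$
so that $g\big(\nabla_{\zeta}\nabla_{X}\zeta-\nabla_{[\zeta,X]}\zeta,X\big)=g(R(\zeta,X)\zeta,X)+g(\nabla_{X}\nabla_{\zeta}\zeta,X)$.

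Finally I would pass to the $(0,4)$-notation. By the definition $R(X,Y,Z,W)=g(R(X,Y)Z,W)$ we have $g(R(\zeta,X)\zeta,X)=R(\zeta,X,\zeta,X)$, and the skew-symmetry of $R$ in its last two slots gives $R(\zeta,X,\zeta,X)=-R(\zeta,X,X,\zeta)$. Substituting, dividing by $2$, and equating to zero yields
$$-R(\zeta,X,X,\zeta)+g(\nabla_{X}\nabla_{\zeta}\zeta,X)+g(\nabla_{X}\zeta,\nabla_{X}\zeta)=0,$$
which rearranges precisely to $(22)$. The only delicate point is the bookkeeping of signs forced by the curvature conventions of Section~\ref{sec2}: one must verify that $g(R(\zeta,X)\zeta,X)$ is $R(\zeta,X,\zeta,X)$ rather than its negative, and then apply the antisymmetry in the last pair of slots to arrive at $R(\zeta,X,X,\zeta)$ with the sign that makes $(22)$ come out as stated. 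Aside from this sign tracking, every step is a direct substitution, so I anticipate no substantive obstacle.
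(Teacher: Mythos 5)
Your argument is correct and is exactly the intended derivation: the paper states this corollary as a quoted result from [2] without reproducing a proof, but polarizing $(21)$ at $Y=X$, substituting $\nabla_{\zeta}\nabla_{X}\zeta-\nabla_{[\zeta,X]}\zeta=R(\zeta,X)\zeta+\nabla_{X}\nabla_{\zeta}\zeta$ from the curvature convention of Section 2, and using skew-symmetry of $R$ in the last two slots (which the paper itself invokes in the proof of Theorem 6.14) gives $(22)$ with the correct signs. No gaps.
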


\begin{Lemma}\label{l1}$^{[8]}$
Let $M$ be a pseudo-Riemannian manifold, $\zeta\in\Gamma(TM)$ is a Killing vector field of constant length, then
$\nabla_{\zeta}\zeta=0.$
\end{Lemma}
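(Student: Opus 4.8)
The plan is to exploit two facts: the Killing condition forces $\nabla\zeta$ to be skew-symmetric with respect to $g$, while the constant-length hypothesis forces $\zeta$ to be $g$-orthogonal to $\nabla_{X}\zeta$ for every $X$. Combining these at the single point $X=\zeta$ kills the covariant derivative $\nabla_{\zeta}\zeta$.

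First I would record the skew-symmetry identity. Since $\zeta$ is Killing, $\mathcal{L}_{\zeta}g=0$, and the Lie-derivative formula $(4)$ gives $g(\nabla_{X}\zeta,Y)+g(\nabla_{Y}\zeta,X)=0$, that is,
$$g(\nabla_{X}\zeta,Y)=-g(\nabla_{Y}\zeta,X)$$
for all $X,Y\in\Gamma(TM)$. (This is just the polarization of equation $(8)$.) Next I would use that $\zeta$ has constant length, i.e. $g(\zeta,\zeta)$ is a constant function on $M$. Differentiating along an arbitrary $X\in\Gamma(TM)$ and using metric compatibility of the Levi-Civita connection $\nabla$ yields
$$0=X\big(g(\zeta,\zeta)\big)=2\,g(\nabla_{X}\zeta,\zeta),$$
so that $g(\nabla_{X}\zeta,\zeta)=0$ for every $X$.

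Now I would specialize the skew-symmetry identity by taking the first slot to be $\zeta$ itself. For any $Y\in\Gamma(TM)$ this gives
$$g(\nabla_{\zeta}\zeta,Y)=-g(\nabla_{Y}\zeta,\zeta)=0,$$
the last equality being exactly the constant-length consequence just derived. Hence $g(\nabla_{\zeta}\zeta,Y)=0$ for all $Y$, and since $g$ is non-degenerate we conclude $\nabla_{\zeta}\zeta=0$.

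The argument is essentially computation-free, so there is no genuine analytic obstacle; the one point that deserves care is that we work in the pseudo-Riemannian (indefinite) setting. One must \emph{not} try to argue that $\nabla_{\zeta}\zeta=0$ follows from $g(\nabla_{\zeta}\zeta,\nabla_{\zeta}\zeta)=0$, since an indefinite metric admits nonzero null vectors; instead the conclusion must come from the vanishing of $g(\nabla_{\zeta}\zeta,Y)$ for \emph{all} $Y$ together with the non-degeneracy of $g$. That is the only step where the pseudo-Riemannian hypothesis intervenes in a nontrivial way.
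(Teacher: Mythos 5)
Your proof is correct. The paper itself gives no argument for this lemma---it is simply quoted from reference [8]---so there is nothing to compare against; your derivation (skew-symmetry of $\nabla\zeta$ from the Killing equation, $g(\nabla_{X}\zeta,\zeta)=0$ from constant length, then $g(\nabla_{\zeta}\zeta,Y)=-g(\nabla_{Y}\zeta,\zeta)=0$ and non-degeneracy of $g$) is the standard self-contained proof, and your caution about not inferring $\nabla_{\zeta}\zeta=0$ from $g(\nabla_{\zeta}\zeta,\nabla_{\zeta}\zeta)=0$ in the indefinite setting is exactly the right point to flag.
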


Then by Corollary 6.3 and Lemma 6.4, we have:
\begin{Corollary}
Let $M$ be a pseudo-Riemannian manifold, $\zeta\in\Gamma(TM),\,\zeta$ is a Killing vector field of constant length and also a 2-Killing vector field, then:
$$R(\zeta,X,X,\zeta)=g(\nabla_{X}\zeta,\nabla_{X}\zeta)\geq0  \eqno{(23)} $$
for any vector field $X\in\Gamma(TM).$
\end{Corollary}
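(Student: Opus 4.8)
The plan is to read off the result almost directly from Corollary 6.3 once the hypothesis of Lemma 6.4 is used to eliminate the cross term. Since $\zeta$ is assumed to be a 2-Killing vector field, Corollary 6.3 supplies, for every $X\in\Gamma(TM)$, the identity
$$R(\zeta,X,X,\zeta)=g(\nabla_{X}\zeta,\nabla_{X}\zeta)+g(\nabla_{X}\nabla_{\zeta}\zeta,X).$$
The entire task is therefore to show that the final term $g(\nabla_{X}\nabla_{\zeta}\zeta,X)$ vanishes identically.

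First I would invoke Lemma 6.4. Because $\zeta$ is a Killing vector field of constant length, that lemma gives $\nabla_{\zeta}\zeta=0$; this is the single substantive input beyond the curvature identity already established. Next, since $\nabla_{\zeta}\zeta$ is the zero vector field on all of $M$, its covariant derivative in any direction is again zero, so $\nabla_{X}\nabla_{\zeta}\zeta=\nabla_{X}0=0$ and hence $g(\nabla_{X}\nabla_{\zeta}\zeta,X)=0$ for every $X$. Substituting this into the displayed identity collapses it to $R(\zeta,X,X,\zeta)=g(\nabla_{X}\zeta,\nabla_{X}\zeta)$, which is the equality asserted in (23). This part is entirely routine, requiring no computation beyond the two cited results.

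The genuinely delicate point — and the step I expect to be the main obstacle — is the closing inequality $g(\nabla_{X}\zeta,\nabla_{X}\zeta)\ge 0$. On a general pseudo-Riemannian manifold the metric $g$ is indefinite, so the quadratic expression $g(v,v)$ can be negative for a nonzero vector $v$; thus $g(\nabla_{X}\zeta,\nabla_{X}\zeta)$ need not be nonnegative, and the inequality as stated is not valid in full pseudo-Riemannian generality. The nonnegativity is guaranteed precisely when $g$ is positive definite, i.e.\ in the Riemannian case, or more weakly when $\nabla_{X}\zeta$ happens to be spacelike. My proof would therefore establish the equality $R(\zeta,X,X,\zeta)=g(\nabla_{X}\zeta,\nabla_{X}\zeta)$ unconditionally, and then append the inequality only under the additional hypothesis that $g$ is Riemannian (or that $\nabla_{X}\zeta$ is spacelike), making explicit where positive-definiteness is used. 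Flagging this restriction is the honest way to handle the $\ge 0$ claim, since it cannot follow from the 2-Killing and constant-length assumptions alone.
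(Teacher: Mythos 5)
Your argument is exactly the paper's: the paper derives (23) by simply combining Corollary 6.3 with Lemma 6.4, precisely as you do, and the equality part of your proof is complete and correct. Your additional caveat about the final inequality is also well taken --- on a genuinely indefinite metric $g(\nabla_{X}\zeta,\nabla_{X}\zeta)$ need not be nonnegative, so the ``$\geq 0$'' in (23) implicitly assumes Riemannian signature (or that $\nabla_{X}\zeta$ is spacelike), a restriction the paper does not state but which your version correctly makes explicit.
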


\begin{Lemma}\label{l1}$^{[2]}$
Let $\zeta$ be a 2-Killing vector field on the compact $n-$dimensional pseudo-Riemannian manifold $(M,g)$ without boundary. If $Ric(\zeta,\zeta)\leq0,$ then $\zeta$ is a parallel vector field and $$Tr\Big(g(\nabla\zeta,\nabla\zeta)\Big)=0.   \eqno{(24)}$$
\end{Lemma}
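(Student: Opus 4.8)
The plan is to convert the 2-Killing condition into a pointwise Bochner-type identity and then integrate it over the compact manifold. The natural starting point is the characterization in Corollary 6.3: for every $X\in\Gamma(TM)$,
\[
R(\zeta,X,X,\zeta)=g(\nabla_{X}\zeta,\nabla_{X}\zeta)+g(\nabla_{X}\nabla_{\zeta}\zeta,X).
\]
Both sides are quadratic in $X$, so I would take the trace by fixing a local orthonormal frame $\{e_{i}\}_{i=1}^{n}$ with $g(e_{i},e_{j})=\epsilon_{i}\delta_{ij}$, $\epsilon_{i}=\pm1$, substituting $X=e_{i}$, and summing against $\epsilon_{i}$.

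Next I would identify each of the three traced terms. Using the pair symmetry $R(\zeta,e_{i},e_{i},\zeta)=R(e_{i},\zeta,\zeta,e_{i})$, the left-hand side sums to $Ric(\zeta,\zeta)$ with the curvature and Ricci conventions of Section 2. The first right-hand term is by definition $Tr\big(g(\nabla\zeta,\nabla\zeta)\big)=\sum_{i}\epsilon_{i}g(\nabla_{e_{i}}\zeta,\nabla_{e_{i}}\zeta)$, while the second right-hand term is the divergence of the vector field $\nabla_{\zeta}\zeta$, since $\mathrm{div}(V)=\sum_{i}\epsilon_{i}g(\nabla_{e_{i}}V,e_{i})$ independently of the frame. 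This produces the pointwise identity
\[
Ric(\zeta,\zeta)=Tr\big(g(\nabla\zeta,\nabla\zeta)\big)+\mathrm{div}(\nabla_{\zeta}\zeta).
\]

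I would then integrate this identity over $M$. Because $M$ is compact without boundary, the divergence theorem gives $\int_{M}\mathrm{div}(\nabla_{\zeta}\zeta)\,dV=0$, so
\[
\int_{M}Ric(\zeta,\zeta)\,dV=\int_{M}Tr\big(g(\nabla\zeta,\nabla\zeta)\big)\,dV.
\]
The hypothesis $Ric(\zeta,\zeta)\le0$ forces the left integral to be nonpositive, whereas $Tr\big(g(\nabla\zeta,\nabla\zeta)\big)=\sum_{i}\epsilon_{i}g(\nabla_{e_{i}}\zeta,\nabla_{e_{i}}\zeta)\ge0$ pointwise forces the right integral to be nonnegative. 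Hence both integrals vanish; since the nonnegative integrand $Tr\big(g(\nabla\zeta,\nabla\zeta)\big)$ has zero integral it vanishes identically, which is exactly $(24)$, and positive-definiteness then gives $\nabla_{e_{i}}\zeta=0$ for all $i$, i.e. $\nabla\zeta=0$, so $\zeta$ is parallel.

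The step I expect to be the main obstacle is the pointwise nonnegativity of $Tr\big(g(\nabla\zeta,\nabla\zeta)\big)$ and the passage from its vanishing to $\nabla\zeta=0$: this is precisely where positive-definiteness of $g$ is genuinely used. In the strictly indefinite case the sum $\sum_{i}\epsilon_{i}g(\nabla_{e_{i}}\zeta,\nabla_{e_{i}}\zeta)$ is no longer sign-definite, so this part relies on the Riemannian hypothesis (as in [2]); everything up to and including the integrated identity is valid in arbitrary signature. The only remaining routine checks are the frame-independence of the traced divergence term and the sign bookkeeping relating $R(\zeta,e_{i},e_{i},\zeta)$ to $Ric(\zeta,\zeta)$.
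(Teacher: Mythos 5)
The paper itself gives no proof of this lemma: it is quoted from [2] (Oprea), and your argument is exactly the one given there — trace the identity of Corollary 6.3 over an orthonormal frame to obtain the pointwise relation $Ric(\zeta,\zeta)=Tr\big(g(\nabla\zeta,\nabla\zeta)\big)+\mathrm{div}(\nabla_{\zeta}\zeta)$, integrate over the closed manifold so that the divergence term drops out, and compare signs. Your reconstruction is correct and complete. The one point worth emphasizing is that your closing caveat is not a side remark but an actual discrepancy with the statement as printed: the lemma is asserted here for a \emph{pseudo}-Riemannian manifold, yet both the pointwise inequality $Tr\big(g(\nabla\zeta,\nabla\zeta)\big)\ge 0$ and the final implication $Tr\big(g(\nabla\zeta,\nabla\zeta)\big)=0\Rightarrow\nabla\zeta=0$ genuinely require $g$ to be positive definite, and the source [2] states and proves the result only in the Riemannian case. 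So your proof establishes the lemma exactly in the Riemannian setting, which is what the cited argument can deliver; the ``pseudo-Riemannian'' wording of the statement overreaches.
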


\begin{Lemma}\label{l1}$^{[9]}$
Let $M=B\times _{f_{1}}M_{1}\times _{f_{2}}M_{2}\times\cdots\times _{f_{m}}M_{m}$ be a multiply warped product. If $X_{B},Y_{B}\in\Gamma(TB),\;X_{i},Y_{i}\in\Gamma(TM_{i}),\,i\in\{1,\cdots,m\},$ then\\
$(1)\nabla_{X_{B}}Y_{B}=\nabla_{X_{B}}^{B}Y_{B}; $  \\
$(2)\nabla_{X_{B}}Y_{i}=\nabla_{Y_{i}}X_{B}=\frac{X_{B}(f_{i})}{f_{i}}Y_{i};$   \\
$(4)\nabla_{X_{i}}Y_{j}=0,$  if $i\neq j;$\\
$(4)\nabla_{X_{i}}Y_{i}=\nabla_{X_{i}}^{i}Y_{i}-f_{i}g_{i}(X_{i},Y_{i})\mathrm{grad}f_{i}.$
\end{Lemma}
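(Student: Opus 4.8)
The plan is to derive every formula directly from the Koszul formula
$$2g(\nabla_X Y,Z)=Xg(Y,Z)+Yg(X,Z)-Zg(X,Y)+g([X,Y],Z)-g([X,Z],Y)-g([Y,Z],X),$$
pairing $\nabla_X Y$ against an arbitrary field $Z$ taken successively from $\Gamma(TB)$ and from each $\Gamma(TM_j)$, and reading off the components; mutual orthogonality of the factors guarantees that these components together determine $\nabla_X Y$. Throughout I would invoke three structural facts about the multiply warped product: (i) the factors are mutually $g$-orthogonal, so $g(U,V)=0$ whenever $U,V$ are lifts from different factors; (ii) lifts from different factors commute, i.e.\ $[X_B,Y_i]=0$ and $[X_i,Y_j]=0$ for $i\neq j$, while $[Y_i,Z_i]$ stays tangent to $M_i$; and (iii) each $f_i$ is a function on $B$, so $X_i(f_i)=0$ for a fibre field $X_i$, whereas $g_i$ evaluated on fibre fields is constant along $B$, so $X_B\big(g_i(Y_i,Z_i)\big)=0$.

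For the first formula I would pair $\nabla_{X_B}Y_B$ with an arbitrary $Z_i$: in Koszul every metric term is then a pairing of a base field against a fibre field, which vanishes, or a derivative of such a term, and every bracket drops out, so $g(\nabla_{X_B}Y_B,Z_i)=0$; pairing instead with $Z_B$ reproduces the Koszul formula for $g_B$, giving $\nabla_{X_B}Y_B=\nabla^B_{X_B}Y_B$. For the second, since $[X_B,Y_i]=0$ the torsion-free property already yields $\nabla_{X_B}Y_i=\nabla_{Y_i}X_B$; pairing against $Z_B$ and against $Z_j$ with $j\neq i$ gives zero, while pairing against $Z_i$ leaves only $X_B g(Y_i,Z_i)=X_B(f_i^2)g_i(Y_i,Z_i)=2f_i X_B(f_i)g_i(Y_i,Z_i)$, which equals $2\frac{X_B(f_i)}{f_i}g(Y_i,Z_i)$, identifying $\nabla_{X_B}Y_i=\frac{X_B(f_i)}{f_i}Y_i$.

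The case $i\neq j$ is handled identically: with $[X_i,Y_j]=0$, every surviving Koszul term carries a cross-factor metric pairing that vanishes, so $\nabla_{X_i}Y_j=0$. The last formula is the only one in which the warping function reappears in the base direction. Pairing $\nabla_{X_i}Y_i$ with $Z_i$ reduces, after factoring out $f_i^2$, to the Koszul formula for $g_i$ and contributes $\nabla^i_{X_i}Y_i$; pairing with $Z_j$ for $j\neq i$ gives zero; and pairing with $Z_B$ leaves only $-Z_B g(X_i,Y_i)=-2f_i Z_B(f_i)g_i(X_i,Y_i)$, whence $g(\nabla_{X_i}Y_i,Z_B)=-f_i g_i(X_i,Y_i)\,g(Z_B,\mathrm{grad}f_i)$, assembling the $-f_i g_i(X_i,Y_i)\mathrm{grad}f_i$ term.

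I expect no serious obstacle, since this is the standard O'Neill computation extended to several fibres; the real work is bookkeeping, namely keeping straight which brackets vanish, which cross-factor metric terms drop out, and, in the last formula, correctly converting $Z_B(f_i)=g(Z_B,\mathrm{grad}f_i)$ so that the base component assembles into a gradient. Because each formula singles out the $M_i$-component and the base component separately, it suffices to verify each component by the appropriate choice of test field $Z$, and the orthogonal splitting of $g$ then recovers the full covariant derivative.
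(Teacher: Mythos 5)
Your proposal is correct. Note that the paper does not prove this lemma at all: it is quoted verbatim from reference [9] (Dobarro--\"Unal), so there is no in-paper argument to compare against. Your Koszul-formula computation, testing $\nabla_X Y$ against lifts $Z$ from the base and from each fibre and using orthogonality of the factors, vanishing of cross-factor brackets, and the fact that $f_i$ and $g_i(Y_i,Z_i)$ depend only on $B$ and $M_i$ respectively, is exactly the standard O'Neill-type derivation that underlies the cited result, and all four component identifications (including the conversion $Z_B(f_i)=g_B(Z_B,\mathrm{grad}f_i)$ that assembles the gradient term) check out.
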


By Lemma 6.7 and straightforward computation, we can get:
\begin{Proposition}
Let $M=B\times _{f_{1}}M_{1}\times _{f_{2}}M_{2}\times\cdots\times _{f_{m}}M_{m}$ be a multiply warped product. $\zeta=\zeta_{B}+\sum\limits_{i=1}^{m}\zeta_{i}\in\Gamma(TM),$ then we have:
\begin{eqnarray} \setcounter{equation}{25}
(\mathcal{L}_{\zeta}\mathcal{L}_{\zeta}g)(X,Y)
&=&(\mathcal{L}^{B}_{\zeta_{B}}\mathcal{L}^{B}_{\zeta_{B}}g_{B})(X_{B},Y_{B})
+\sum\limits_{i=1}^{m}f_{i}^{2}(\mathcal{L}^{i}_{\zeta_{i}}\mathcal{L}^{i}_{\zeta_{i}}g_{i})(X_{i},Y_{i})
+4\sum\limits_{i=1}^{m}f_{i}\zeta_{B}(f_{i})(\mathcal{L}^{i}_{\zeta_{i}}g_{i})(X_{i},Y_{i})\nonumber \\
&+&2\sum\limits_{i=1}^{m}f_{i}\zeta_{B}\Big(\zeta_{B}(f_{i})\Big)g_{i}(X_{i},Y_{i})
+2\sum\limits_{i=1}^{m}\zeta_{B}(f_{i})\zeta_{B}(f_{i})g_{i}(X_{i},Y_{i})
\end{eqnarray}
\end{Proposition}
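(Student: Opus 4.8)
The plan is to realise $\mathcal{L}_{\zeta}\mathcal{L}_{\zeta}g$ as $\mathcal{L}_{\zeta}h$, where $h:=\mathcal{L}_{\zeta}g$ is the symmetric $(0,2)$-tensor already computed in Proposition 5.1 (equation (18)), and then to iterate the first-order calculation rather than expand the second covariant derivatives of Proposition 6.2 by brute force. Since both sides of (25) are $C^{\infty}(M)$-bilinear and symmetric in $X,Y$, it suffices to verify the identity when $X$ and $Y$ each range over lifts of vector fields from the factors $B,M_{1},\dots,M_{m}$, as these span the tangent space at every point. For the tensor $h$ I would use the standard Lie-derivative formula
$$(\mathcal{L}_{\zeta}h)(X,Y)=\zeta\big(h(X,Y)\big)-h([\zeta,X],Y)-h(X,[\zeta,Y]).$$

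Two structural facts make the computation collapse. First, lifts of vector fields from distinct factors commute, so that, writing $\zeta=\zeta_{B}+\sum_{i}\zeta_{i}$ with each summand a lift from a single factor, one has $[\zeta,X_{B}]=[\zeta_{B},X_{B}]$ (again a $B$-lift) and $[\zeta,X_{i}]=[\zeta_{i},X_{i}]$ (an $M_{i}$-lift). Second, each warping function $f_{i}$ lives on $B$ alone while $g_{i}(X_{i},Y_{i})$ and $(\mathcal{L}^{i}_{\zeta_{i}}g_{i})(X_{i},Y_{i})$ live on $M_{i}$ alone, so differentiation along $\zeta$ splits as $\zeta=\zeta_{B}+\zeta_{i}$ acting on the respective factors, the remaining $\zeta_{k}$ annihilating everything. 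First I would dispose of the mixed case: by (18) $h(X_{B},Y_{i})=0$, and by the commuting-lifts fact every bracket term stays within a single factor, whence $(\mathcal{L}_{\zeta}h)(X_{B},Y_{i})=0$; so no cross terms appear, as in (25). The pure base case $X=X_{B},Y=Y_{B}$ is equally quick: $h(X_{B},Y_{B})=(\mathcal{L}^{B}_{\zeta_{B}}g_{B})(X_{B},Y_{B})$ is a function on $B$, all brackets reduce to $[\zeta_{B},\cdot]$, and the three terms reassemble, by the same first-order identity applied on $B$, into $(\mathcal{L}^{B}_{\zeta_{B}}\mathcal{L}^{B}_{\zeta_{B}}g_{B})(X_{B},Y_{B})$.

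The substantive work is the pure fiber case $X=X_{i},Y=Y_{i}$, where (18) gives
$$h(X_{i},Y_{i})=f_{i}^{2}(\mathcal{L}^{i}_{\zeta_{i}}g_{i})(X_{i},Y_{i})+2f_{i}\zeta_{B}(f_{i})g_{i}(X_{i},Y_{i}).$$
Applying $\mathcal{L}_{\zeta}$ and using the two facts above, the $f_{i}^{2}$-weighted contributions (namely $f_{i}^{2}\zeta_{i}$ of the first summand together with the $f_{i}^{2}$-parts of the two bracket subtractions) recombine into $f_{i}^{2}(\mathcal{L}^{i}_{\zeta_{i}}\mathcal{L}^{i}_{\zeta_{i}}g_{i})(X_{i},Y_{i})$, while differentiating the scalar $2f_{i}\zeta_{B}(f_{i})$ along $\zeta_{B}$ produces the two final terms $2f_{i}\zeta_{B}(\zeta_{B}(f_{i}))g_{i}(X_{i},Y_{i})+2\zeta_{B}(f_{i})^{2}g_{i}(X_{i},Y_{i})$ of (25). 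Summing over $i$ and adding the base contribution yields the stated formula.

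The step I expect to be the main obstacle — and the one I would write out in full — is tracking the coefficient $4$ in the mixed term $4f_{i}\zeta_{B}(f_{i})(\mathcal{L}^{i}_{\zeta_{i}}g_{i})(X_{i},Y_{i})$, which arises from two independent sources that must be shown to agree. One copy of $2f_{i}\zeta_{B}(f_{i})(\mathcal{L}^{i}_{\zeta_{i}}g_{i})$ comes from $\zeta_{B}$ differentiating the factor $f_{i}^{2}$ in the first summand of $h(X_{i},Y_{i})$; the second copy must be assembled from $2f_{i}\zeta_{B}(f_{i})\,\zeta_{i}\big(g_{i}(X_{i},Y_{i})\big)$ together with the subtraction terms $-2f_{i}\zeta_{B}(f_{i})\,g_{i}([\zeta_{i},X_{i}],Y_{i})-2f_{i}\zeta_{B}(f_{i})\,g_{i}(X_{i},[\zeta_{i},Y_{i}])$, which by the first-order Lie-derivative identity on $M_{i}$ rebuild exactly $2f_{i}\zeta_{B}(f_{i})(\mathcal{L}^{i}_{\zeta_{i}}g_{i})(X_{i},Y_{i})$. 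Recognising these bracket corrections as a genuine fiber Lie derivative, rather than leaving them as loose terms, is what forces the clean coefficients; the connection identities of Lemma 6.7 (equivalently, the covariant form of Proposition 6.2) can serve as an independent cross-check but are unnecessary once the bracket bookkeeping is carried out on lifts.
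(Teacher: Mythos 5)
Your argument is correct, and it takes a genuinely different route from the paper. The paper obtains (25) ``by Lemma 6.7 and straightforward computation,'' i.e.\ by substituting the Levi-Civita connection formulas of the multiply warped product into the covariant expression (21) of Proposition 6.2 and simplifying the second covariant derivatives term by term. You never touch the connection: you write $\mathcal{L}_{\zeta}\mathcal{L}_{\zeta}g=\mathcal{L}_{\zeta}h$ with $h=\mathcal{L}_{\zeta}g$ already given by (18), apply the intrinsic formula $(\mathcal{L}_{\zeta}h)(X,Y)=\zeta\big(h(X,Y)\big)-h([\zeta,X],Y)-h(X,[\zeta,Y])$ on lifts, and use only that lifts from distinct factors commute and that $f_{i}$ depends on $B$ alone while the $g_{i}$-expressions depend on $M_{i}$ alone. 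Your bookkeeping of the coefficient $4$ is exactly right: one copy of $2f_{i}\zeta_{B}(f_{i})(\mathcal{L}^{i}_{\zeta_{i}}g_{i})$ comes from $\zeta_{B}(f_{i}^{2})$ hitting the first summand of $h(X_{i},Y_{i})$, and the second from reassembling $\zeta_{i}\big(g_{i}(X_{i},Y_{i})\big)$ minus the two bracket corrections into a fiber Lie derivative; the product rule applied to $\zeta_{B}\big(2f_{i}\zeta_{B}(f_{i})\big)$ then yields the last two terms of (25). What your approach buys is independence from Lemma 6.7 and from the curvature-flavoured identity (21), plus a transparent explanation of where each coefficient comes from; what it costs is reliance on (18), which in this paper is itself derived from connection formulas (as the $P=0$ case of Section 4), so the connection is quarantined in the first-order step rather than eliminated. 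One point you should state explicitly: the identities $[\zeta,X_{B}]=[\zeta_{B},X_{B}]$ and $[\zeta,X_{i}]=[\zeta_{i},X_{i}]$ require $\zeta_{B}$ and each $\zeta_{i}$ to be lifts of vector fields on the individual factors; this is the paper's standing convention for the notation $\zeta=\zeta_{B}+\sum_{i}\zeta_{i}$, but your proof genuinely uses it, whereas a pointwise tensorial argument alone would not justify the bracket reductions for an arbitrary decomposition of $\zeta$.
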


By Proposition 6.8, we have:
\begin{Corollary}
Let $M=B\times _{f_{1}}M_{1}\times _{f_{2}}M_{2}\times\cdots\times _{f_{m}}M_{m}$ be a multiply warped product. $\zeta=\zeta_{B}+\sum\limits_{i=1}^{m}\zeta_{i}\in\Gamma(TM),$ if $\zeta$ is a 2-Killing vector field, then:\\
$(1)\,\zeta_{B}$ is a 2-Killing vector field.\\
$(2)$ each $\zeta_{i}$ is a 2-Killing vector field if $\zeta_{B}(f_{i})=0,$ for $i\in\{1,\cdots,m\}.$
\end{Corollary}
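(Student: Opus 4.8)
The plan is to read everything off the master formula in Proposition 6.8, which expresses $(\mathcal{L}_{\zeta}\mathcal{L}_{\zeta}g)(X,Y)$ as a sum of a pure-base term, pure-fiber terms $f_i^2(\mathcal{L}^i_{\zeta_i}\mathcal{L}^i_{\zeta_i}g_i)(X_i,Y_i)$, and several mixed terms each carrying a factor $g_i(X_i,Y_i)$ or $(\mathcal{L}^i_{\zeta_i}g_i)(X_i,Y_i)$. Since $\zeta$ is a $2$-Killing vector field, the left-hand side vanishes for \emph{all} $X,Y\in\Gamma(TM)$, and the strategy is simply to restrict the test vector fields to a single factor of the product so that only the term we are interested in survives.

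For part $(1)$ I would take $X=X_B$ and $Y=Y_B$ with $X_B,Y_B\in\Gamma(TB)$, i.e. set the $M_i$-components of $X$ and $Y$ equal to zero. Then every summand in the formula of Proposition 6.8 that carries a factor $g_i(X_i,Y_i)$, $(\mathcal{L}^i_{\zeta_i}g_i)(X_i,Y_i)$, or $(\mathcal{L}^i_{\zeta_i}\mathcal{L}^i_{\zeta_i}g_i)(X_i,Y_i)$ vanishes, leaving only $(\mathcal{L}^B_{\zeta_B}\mathcal{L}^B_{\zeta_B}g_B)(X_B,Y_B)$. Because the left-hand side is zero, this base term vanishes for all $X_B,Y_B\in\Gamma(TB)$, which is exactly the statement that $\zeta_B$ is a $2$-Killing vector field on $B$.

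For part $(2)$ I would fix an index $i$ with $\zeta_B(f_i)=0$ and take $X=X_i$, $Y=Y_i$ with $X_i,Y_i\in\Gamma(TM_i)$, setting all other components to zero. The base term and every summand indexed by $j\neq i$ drop out. Among the surviving $i$-indexed terms, $4f_i\zeta_B(f_i)(\mathcal{L}^i_{\zeta_i}g_i)(X_i,Y_i)$ and $2\zeta_B(f_i)^2 g_i(X_i,Y_i)$ vanish because $\zeta_B(f_i)=0$, and $2f_i\zeta_B\big(\zeta_B(f_i)\big)g_i(X_i,Y_i)$ vanishes because $\zeta_B(f_i)$ is the zero function on $B$, so differentiating it again along $\zeta_B$ gives $0$. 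Hence only $f_i^2(\mathcal{L}^i_{\zeta_i}\mathcal{L}^i_{\zeta_i}g_i)(X_i,Y_i)$ remains; since it must equal zero and $f_i>0$, we obtain $(\mathcal{L}^i_{\zeta_i}\mathcal{L}^i_{\zeta_i}g_i)(X_i,Y_i)=0$ for all $X_i,Y_i$, i.e. $\zeta_i$ is a $2$-Killing vector field on $M_i$.

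The argument is routine once Proposition 6.8 is available, and there is no serious obstacle. The only point deserving a moment's care is the second-order term $2f_i\zeta_B\big(\zeta_B(f_i)\big)g_i(X_i,Y_i)$ in part $(2)$: one must note that the hypothesis $\zeta_B(f_i)=0$ is an identity of functions on $B$, not a pointwise condition, so the iterated derivative $\zeta_B\big(\zeta_B(f_i)\big)$ also vanishes identically and the term genuinely disappears. All the real work has already been absorbed into the derivation of the master formula.
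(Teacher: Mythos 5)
Your proof is correct and follows essentially the same route the paper intends: Corollary 6.9 is derived directly from the master formula of Proposition 6.8 by restricting the test vector fields to a single factor, with the observation that $\zeta_{B}(f_{i})=0$ as an identity of functions on $B$ forces $\zeta_{B}\big(\zeta_{B}(f_{i})\big)=0$ as well. Nothing further is needed.
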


\begin{Corollary}
Let $M=B\times _{f_{1}}M_{1}\times _{f_{2}}M_{2}\times\cdots\times _{f_{m}}M_{m}$ be a multiply warped product. $\zeta=\zeta_{B}+\sum\limits_{i=1}^{m}\zeta_{i}\in\Gamma(TM),$ if $\zeta_{B}$ and each $\zeta_{i}$ are 2-Killing vector fields on $B$ and $M_{i}$ respectively, then $\zeta$ is a 2-Killing vector field on $M$ if and only if one of the following conditions holds:\\
$(1)\,\zeta_{B}(f_{i})=0.$\\
$(2)$ each $\zeta_{i}$ is a homothetic vector field on $M_{i}$ with homothetic factor $c_{i}$(i.e, $\mathcal{L}^{i}_{\zeta_{i}}g_{i}=c_{i}g_{i}$) such that
$$f_{i}\zeta_{B}\Big(\zeta_{B}(f_{i})\Big)+\zeta_{B}(f_{i})\zeta_{B}(f_{i})=-2c_{i}f_{i}\zeta_{B}(f_{i}). \eqno{(26)}$$
for any $i\in\{1,\cdots,m\}.$
\end{Corollary}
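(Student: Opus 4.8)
The plan is to extract everything from the master identity in Proposition 6.8. By hypothesis $\zeta_B$ is a 2-Killing vector field on $B$ and each $\zeta_i$ is a 2-Killing vector field on $M_i$, so $\mathcal{L}^B_{\zeta_B}\mathcal{L}^B_{\zeta_B}g_B=0$ and $\mathcal{L}^i_{\zeta_i}\mathcal{L}^i_{\zeta_i}g_i=0$ for every $i$. Substituting these into (25) annihilates the first two terms on the right-hand side, so that for all $X,Y\in\Gamma(TM)$,
$$(\mathcal{L}_\zeta\mathcal{L}_\zeta g)(X,Y)=\sum_{i=1}^m\Big[4f_i\zeta_B(f_i)(\mathcal{L}^i_{\zeta_i}g_i)(X_i,Y_i)+\big(2f_i\zeta_B(\zeta_B(f_i))+2\zeta_B(f_i)\zeta_B(f_i)\big)g_i(X_i,Y_i)\Big].$$
Hence $\zeta$ is a 2-Killing vector field if and only if this sum vanishes identically on $M$.

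Next I would separate the sum factor by factor. Fixing an index $i$ and choosing $X,Y$ with all components zero except those in $TM_i$, every summand with $j\neq i$ drops out, since both $g_j(X_j,Y_j)$ and $(\mathcal{L}^j_{\zeta_j}g_j)(X_j,Y_j)$ vanish when $X_j=Y_j=0$. Therefore $\zeta$ is 2-Killing if and only if, for each $i$ and all $X_i,Y_i\in\Gamma(TM_i)$,
$$4f_i\zeta_B(f_i)(\mathcal{L}^i_{\zeta_i}g_i)(X_i,Y_i)+\big(2f_i\zeta_B(\zeta_B(f_i))+2\zeta_B(f_i)\zeta_B(f_i)\big)g_i(X_i,Y_i)=0. \qquad(\ast)$$

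Then I would split $(\ast)$ into the two cases of the statement. If $\zeta_B(f_i)=0$ identically, then $\zeta_B(\zeta_B(f_i))=\zeta_B(0)=0$ as well, so both coefficients in $(\ast)$ vanish and the identity holds automatically; this is condition (1). If instead $\zeta_B(f_i)$ is not identically zero, then at a point where $\zeta_B(f_i)\neq 0$ I may divide $(\ast)$ by $4f_i\zeta_B(f_i)$ (recall $f_i>0$) to obtain $(\mathcal{L}^i_{\zeta_i}g_i)(X_i,Y_i)=c_i\,g_i(X_i,Y_i)$ with
$$c_i=-\frac{f_i\zeta_B(\zeta_B(f_i))+\zeta_B(f_i)\zeta_B(f_i)}{2f_i\zeta_B(f_i)}.$$
Because the left-hand tensor $\mathcal{L}^i_{\zeta_i}g_i$ is intrinsic to $M_i$ and independent of the point of $B$, the coefficient $c_i$ is forced to be a genuine constant, i.e. $\zeta_i$ is a homothetic vector field with homothetic factor $c_i$; clearing denominators recovers exactly the relation (26), which is condition (2). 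Conversely, assuming $\mathcal{L}^i_{\zeta_i}g_i=c_ig_i$ together with (26) and substituting back into $(\ast)$, the nondegeneracy of $g_i$ shows $(\ast)$ holds, completing the equivalence.

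The step I expect to require the most care is the passage from the summed identity to the per-factor identity $(\ast)$, together with the argument in case (2) that the ratio defining $c_i$ must be constant. Both rest on the product structure: $\mathcal{L}^i_{\zeta_i}g_i$ and $g_i$ depend only on $M_i$ while all warping quantities depend only on $B$, so matching the two sides across the product simultaneously pins $c_i$ down as a constant and yields (26). Everything else reduces to direct substitution into Proposition 6.8.
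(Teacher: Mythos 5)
Your proof is correct and follows exactly the route the paper intends: the corollary is stated as a direct consequence of Proposition 6.8, and you substitute the 2-Killing hypotheses into (25), localize to each fiber $M_i$, and read off the dichotomy between $\zeta_B(f_i)=0$ and the homothety condition (26). Your additional care in arguing that the ratio defining $c_i$ is a genuine constant (since $\mathcal{L}^i_{\zeta_i}g_i$ depends only on $M_i$ while the coefficient depends only on $B$) is a detail the paper omits but is exactly right.
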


\begin{Corollary}
Let $M=B\times _{f_{1}}M_{1}\times _{f_{2}}M_{2}\times\cdots\times _{f_{m}}M_{m}$ be a multiply warped product. $\zeta=\zeta_{B}+\sum\limits_{i=1}^{m}\zeta_{i}\in\Gamma(TM),$ then $\zeta$ is a 2-Killing vector field on $M$ if one of the following conditions holds:\\
$(1)\,\zeta_{B}$ and each $\zeta_{i}$ are 2-Killing vector fields on $B$ and $M_{i}$ respectively, and
$\zeta_{B}(f_{i})=0,$ for any \\ \mbox{}\quad\; $i\in\{1,\cdots,m\}.$\\
$(2)\,\zeta=\sum\limits_{i=1}^{m}\zeta_{i}$ and $\zeta_{i}$ is a 2-Killing vector field on $M_{i},$ for any $i\in\{1,\cdots,m\}.$
\end{Corollary}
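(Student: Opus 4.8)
The plan is to deduce both cases directly from the master formula (25) in Proposition 6.8 together with the definition of a 2-Killing vector field, namely $\mathcal{L}_{\zeta}\mathcal{L}_{\zeta}g=0$ (Definition 6.1). The strategy is purely a matter of checking that, under each hypothesis, every one of the five terms on the right-hand side of (25) vanishes for all $X,Y\in\Gamma(TM)$; no computation beyond (25) is required, since Proposition 6.8 has already reduced the composed Lie derivative $\mathcal{L}_{\zeta}\mathcal{L}_{\zeta}g$ to a sum of an intrinsic base piece, intrinsic fiber pieces, and cross terms that each carry a factor involving $\zeta_{B}(f_{i})$.

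First I would treat case (1). The hypothesis that $\zeta_{B}$ is a 2-Killing vector field on $B$ forces the first term $(\mathcal{L}^{B}_{\zeta_{B}}\mathcal{L}^{B}_{\zeta_{B}}g_{B})(X_{B},Y_{B})$ to vanish, and the hypothesis that each $\zeta_{i}$ is a 2-Killing vector field on $M_{i}$ gives $(\mathcal{L}^{i}_{\zeta_{i}}\mathcal{L}^{i}_{\zeta_{i}}g_{i})(X_{i},Y_{i})=0$, killing the second term. The remaining three terms of (25) each carry an explicit factor $\zeta_{B}(f_{i})$ (either $f_{i}\zeta_{B}(f_{i})$, $f_{i}\zeta_{B}(\zeta_{B}(f_{i}))$, or $\zeta_{B}(f_{i})\zeta_{B}(f_{i})$), so the assumption $\zeta_{B}(f_{i})=0$ for every $i$ annihilates all of them simultaneously. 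Hence the entire right-hand side of (25) is zero for all $X,Y$, so $\mathcal{L}_{\zeta}\mathcal{L}_{\zeta}g=0$ and $\zeta$ is 2-Killing on $M$.

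Next I would treat case (2). Here $\zeta=\sum_{i=1}^{m}\zeta_{i}$ means $\zeta_{B}=0$, so the base term vanishes trivially and, moreover, $\zeta_{B}(f_{i})=0$ for each $i$; this clears the last three terms of (25) exactly as in case (1). The second term vanishes because each $\zeta_{i}$ is assumed 2-Killing on $M_{i}$. Again the right-hand side of (25) is identically zero, which gives $\mathcal{L}_{\zeta}\mathcal{L}_{\zeta}g=0$ and the conclusion.

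Since the whole argument is a substitution into (25), I do not anticipate a genuine obstacle; the only point requiring care is to confirm that every term other than the two intrinsic second-order Lie-derivative terms indeed contains the factor $\zeta_{B}(f_{i})$, so that the single scalar condition $\zeta_{B}(f_{i})=0$ (which holds automatically in case (2), where $\zeta_{B}=0$) is enough to clear them all. Once that bookkeeping is verified against the explicit form of (25), the corollary follows immediately.
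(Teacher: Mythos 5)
Your proposal is correct and is essentially the argument the paper intends: Corollary 6.11 is a direct substitution into formula (25) of Proposition 6.8 (equivalently, a specialization of Corollary 6.10), with case (2) reducing to case (1) because $\zeta_{B}=0$. The only point worth stating slightly more carefully is that the term $f_{i}\zeta_{B}\big(\zeta_{B}(f_{i})\big)$ does not literally contain $\zeta_{B}(f_{i})$ as a factor, but it still vanishes because $\zeta_{B}(f_{i})\equiv 0$ as a function on $B$ forces its derivative along $\zeta_{B}$ to vanish as well.
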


\begin{Proposition}
Let $M=B\times _{f_{1}}M_{1}\times _{f_{2}}M_{2}\times\cdots\times _{f_{m}}M_{m}$ be a multiply warped product,
${\rm dim}B=n,\,{\rm dim}M_{i}=n_{i},\,i\in\{1,\cdots,m\},\,
\zeta=\zeta_{B}+\sum\limits_{i=1}^{m}\zeta_{i}\in\Gamma(TM),$ then
\begin{eqnarray}  \setcounter{equation}{27}
Tr\Big(g(\nabla\zeta,\nabla\zeta)\Big)&=&Tr\Big(g_{B}(\nabla^B\zeta_{B},\nabla^B\zeta_{B})\Big)
+\sum\limits_{i=1}^{m}Tr\Big(g_{i}(\nabla^i\zeta_{i},\nabla^i\zeta_{i})\Big)
+2\sum\limits_{i=1}^{m}\|\zeta_{i}\|_{i}^2\|\mathrm{grad}f_{i}\|_{B}^2 \nonumber \\
&+&\sum\limits_{i=1}^{m}\frac{n_{i}}{f_{i}^2}\Big(\zeta_{B}(f_{i})\Big)^2
+2\sum\limits_{i=1}^{m}\frac{\zeta_{B}(f_{i})}{f_{i}}\mathrm{div}_{i}\zeta_{i}.
\end{eqnarray}
\end{Proposition}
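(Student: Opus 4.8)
The plan is to evaluate the trace against a $g$-orthonormal frame adapted to the product structure and to reduce every term to intrinsic quantities on the factors by means of Lemma 6.7. First I would fix a local $g_B$-orthonormal frame $\{e_\alpha\}_{\alpha=1}^n$ on $B$ with $g_B(e_\alpha,e_\alpha)=\epsilon_\alpha$, and for each $i$ a local $g_i$-orthonormal frame $\{E^i_a\}_{a=1}^{n_i}$ on $M_i$ with $g_i(E^i_a,E^i_a)=\epsilon^i_a$. Since the metric on the $M_i$-factor is $f_i^2 g_i$, the collection $\{e_\alpha\}\cup\{f_i^{-1}E^i_a\}$ is a $g$-orthonormal frame, so that $Tr\big(g(\nabla\zeta,\nabla\zeta)\big)=\sum_\alpha \epsilon_\alpha g(\nabla_{e_\alpha}\zeta,\nabla_{e_\alpha}\zeta)+\sum_i f_i^{-2}\sum_a \epsilon^i_a g(\nabla_{E^i_a}\zeta,\nabla_{E^i_a}\zeta)$.

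Next I would compute $\nabla_X\zeta$ in the two types of directions using Lemma 6.7. For a base direction, $\nabla_{e_\alpha}\zeta=\nabla^B_{e_\alpha}\zeta_B+\sum_j \frac{e_\alpha(f_j)}{f_j}\zeta_j$, whose summands lie in mutually orthogonal factors; hence, after using $g=f_j^2 g_j$ on $TM_j$, one gets $g(\nabla_{e_\alpha}\zeta,\nabla_{e_\alpha}\zeta)=g_B(\nabla^B_{e_\alpha}\zeta_B,\nabla^B_{e_\alpha}\zeta_B)+\sum_j (e_\alpha f_j)^2\|\zeta_j\|_j^2$. Summing over $\alpha$ and invoking $\sum_\alpha\epsilon_\alpha(e_\alpha f_j)^2=\|\mathrm{grad}f_j\|_B^2$ contributes $Tr\big(g_B(\nabla^B\zeta_B,\nabla^B\zeta_B)\big)+\sum_j\|\mathrm{grad}f_j\|_B^2\|\zeta_j\|_j^2$. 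For a fiber direction, parts (2) and (4) of Lemma 6.7 give $\nabla_{E^i_a}\zeta=\frac{\zeta_B(f_i)}{f_i}E^i_a+\nabla^i_{E^i_a}\zeta_i-f_i g_i(E^i_a,\zeta_i)\mathrm{grad}f_i$, in which only the $TM_i$-part and the $TB$-part occur, since $\nabla_{E^i_a}\zeta_j=0$ for $j\neq i$.

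Then I would expand the fiber norm, noting that the $\mathrm{grad}f_i$-term is $g$-orthogonal to the other two. The diagonal squares produce $\frac{n_i}{f_i^2}(\zeta_B(f_i))^2$, the intrinsic trace $Tr\big(g_i(\nabla^i\zeta_i,\nabla^i\zeta_i)\big)$, and, via $\sum_a \epsilon^i_a (g_i(E^i_a,\zeta_i))^2=\|\zeta_i\|_i^2$, the term $\|\mathrm{grad}f_i\|_B^2\|\zeta_i\|_i^2$; the cross term $2g\big(\frac{\zeta_B(f_i)}{f_i}E^i_a,\nabla^i_{E^i_a}\zeta_i\big)$ is exactly where the divergence appears, because $\sum_a \epsilon^i_a g_i(\nabla^i_{E^i_a}\zeta_i,E^i_a)=\mathrm{div}_i\zeta_i$, yielding $\frac{2\zeta_B(f_i)}{f_i}\mathrm{div}_i\zeta_i$. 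Adding the base contribution and this fiber contribution, the two identical pieces $\sum_i\|\mathrm{grad}f_i\|_B^2\|\zeta_i\|_i^2$ (one from each) combine into the factor $2$, and the stated identity follows.

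The hard part will not be conceptual but organizational: correctly tracking the rescaling $f_i^{-1}$ between the $g_i$- and $g$-orthonormal frames together with the indefinite signs $\epsilon_\alpha,\epsilon^i_a$, and recognizing the three pseudo-Riemannian contractions $\sum_\alpha\epsilon_\alpha(e_\alpha f_i)^2=\|\mathrm{grad}f_i\|_B^2$, $\sum_a\epsilon^i_a(g_i(E^i_a,\zeta_i))^2=\|\zeta_i\|_i^2$, and $\sum_a\epsilon^i_a g_i(\nabla^i_{E^i_a}\zeta_i,E^i_a)=\mathrm{div}_i\zeta_i$. Once these are in place, the remainder is routine bookkeeping.
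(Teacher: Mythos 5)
Your proposal is correct and follows essentially the same route as the paper: choose a $g$-orthonormal frame adapted to the product (rescaling the fiber frames by $f_i^{-1}$), compute $\nabla_{e_\alpha}\zeta$ and $\nabla_{E^i_a}\zeta$ from Lemma 6.7, and contract, with the base and fiber sums each contributing one copy of $\sum_i\|\zeta_i\|_i^2\|\mathrm{grad}f_i\|_B^2$. Your explicit bookkeeping of the signs $\epsilon_\alpha,\epsilon^i_a$ is a welcome refinement that the paper leaves implicit, but it does not change the argument.
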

\begin{proof}
Suppose $\{e_{1}^B,e_{2}^B,\cdots,e_{n}^B\}$ is an orthonormal frame of $B,\,\{e_{1}^i,e_{2}^i,\cdots,e_{n_{i}}^i\}$ is an orthonormal frame of $M_{i},\,i\in\{1,\cdots,m\}.$ Then
$\{e_{1}^B,e_{2}^B,\cdots,e_{n}^B,e_{1}^1,e_{2}^1,\cdots,e_{n_{1}}^1,\cdots,e_{1}^m,e_{2}^m,\cdots,e_{n_{m}}^m\}$ is an orthonormal frame of $M.$ Then for any $\zeta\in\Gamma(TM),$ we have:
$$Tr\Big(g(\nabla\zeta,\nabla\zeta)\Big)
=\sum\limits_{i=1}^{n}g(\nabla_{e_{i}^B}\zeta,\nabla_{e_{i}^B}\zeta)
+\frac{1}{f_{1}^2}\sum\limits_{i=1}^{n_{1}}g(\nabla_{e_{i}^1}\zeta,\nabla_{e_{i}^1}\zeta)+\cdots
+\frac{1}{f_{m}^2}\sum\limits_{i=1}^{n_{m}}g(\nabla_{e_{i}^m}\zeta,\nabla_{e_{i}^m}\zeta).  $$
By Lemma 6.7 and straightforward computation, we get
$$\sum\limits_{i=1}^{n}g(\nabla_{e_{i}^B}\zeta,\nabla_{e_{i}^B}\zeta)=
Tr\Big(g_{B}(\nabla^B\zeta_{B},\nabla^B\zeta_{B})\Big)
+\sum\limits_{i=1}^{m}\|\zeta_{i}\|_{i}^2\|\mathrm{grad}f_{i}\|_{B}^2,$$
$$\frac{1}{f_{i}^2}\sum\limits_{j=1}^{n_{i}}g(\nabla_{e_{j}^i}\zeta,\nabla_{e_{j}^i}\zeta)
=\frac{n_{i}}{f_{i}^2}\Big(\zeta_{B}(f_{i})\Big)^2+Tr\Big(g_{i}(\nabla^i\zeta_{i},\nabla^i\zeta_{i})\Big)
+\|\zeta_{i}\|_{i}^2\|\mathrm{grad}f_{i}\|_{B}^2+2\frac{\zeta_{B}(f_{i})}{f_{i}}\mathrm{div}_{i}\zeta_{i}.$$
So we get the equation $(27).$
\end{proof}

\newtheorem{Theorem}[Definition]{Theorem}
\begin{Theorem}
Let $M=B\times _{f_{1}}M_{1}\times _{f_{2}}M_{2}\times\cdots\times _{f_{m}}M_{m}$ be a multiply warped product. $\zeta=\zeta_{B}+\sum\limits_{i=1}^{m}\zeta_{i}\in\Gamma(TM),\,B$ is a compact manifold without boundary, each $M_{i}$ is a compact manifold without boundary, then:\\
$(1)\,\zeta=\zeta_{B}+\sum\limits_{i=1}^{m}\zeta_{i}$ is parallel if $\zeta_{B}$ is a 2-Killing vector field, each $\zeta_{i}$ is a 2-Killing vector field,\\ \mbox{}  \;\, $\;Ric^B(\zeta_{B},\zeta_{B})\leq0,\,Ric^i(\zeta_{i},\zeta_{i})\leq0,$
and $f_{i}$ is constant, for $i\in\{1,\cdots,m\}.$\\
$(2)\,\zeta=\zeta_{B}$ is parallel if $\zeta_{B}$ is a 2-Killing vector field, $Ric^B(\zeta_{B},\zeta_{B})\leq0,$
and $\zeta_{B}(f_{i})=0.$\\
$(3)\,\zeta=\zeta_{B}+\zeta_{i}$ is parallel for a fixed $i$ if $\zeta_{B}$ is a 2-Killing vector field, $\zeta_{i}$ is a 2-Killing vector field,\\ \mbox{}  \;\, $\;Ric^B(\zeta_{B},\zeta_{B})\leq0,\,Ric^i(\zeta_{i},\zeta_{i})\leq0,$
and $f_{i}$ is constant.\\
$(4)\,\zeta=\zeta_{i}$ is parallel for a fixed $i$ if $\zeta_{i}$ is a 2-Killing vector field, $Ric^i(\zeta_{i},\zeta_{i})\leq0,$ and $f_{i}$ is constant.\\
$(5)\,\zeta=\sum\limits_{i=1}^{m}\zeta_{i}$ is parallel if each $\zeta_{i}$ is a 2-Killing vector field, $Ric^i(\zeta_{i},\zeta_{i})\leq0,$ and $f_{i}$ is constant, for \\ \mbox{}  \;\;\, $i\in\{1,\cdots,m\}.$
\end{Theorem}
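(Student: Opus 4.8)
The plan is to reduce the global statement to the compact splitting result of Lemma 6.6, applied separately on each factor, and then to reassemble the full covariant derivative using the connection formulas of Lemma 6.7. First I would record that $M=B\times M_{1}\times\cdots\times M_{m}$ is compact and without boundary, being a finite product of such manifolds, so Lemma 6.6 is available both on $M$ and on every factor.

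The core step is to apply Lemma 6.6 on $B$ and on each $M_{i}$ individually. On $B$ the hypotheses supply a $2$-Killing field $\zeta_{B}$ with $Ric^{B}(\zeta_{B},\zeta_{B})\le 0$ on a compact boundaryless manifold, so Lemma 6.6 yields $\nabla^{B}\zeta_{B}=0$; identically, on each $M_{i}$ carrying a nonzero component the hypotheses $\zeta_{i}$ $2$-Killing and $Ric^{i}(\zeta_{i},\zeta_{i})\le 0$ give $\nabla^{i}\zeta_{i}=0$. These factor-wise $2$-Killing hypotheses are exactly the conclusions Corollary 6.9 attaches to a $2$-Killing $\zeta$ on $M$, so invoking them introduces no circularity.

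Next I would compute $\nabla_{X}\zeta$ for an arbitrary $X=X_{B}+\sum_{j}X_{j}$ straight from Lemma 6.7:
\[
\nabla_{X}\zeta=\nabla^{B}_{X_{B}}\zeta_{B}+\sum_{j}\Big(\nabla^{j}_{X_{j}}\zeta_{j}-f_{j}g_{j}(X_{j},\zeta_{j})\mathrm{grad}f_{j}\Big)+\sum_{k}\frac{X_{B}(f_{k})}{f_{k}}\zeta_{k}+\sum_{j}\frac{\zeta_{B}(f_{j})}{f_{j}}X_{j}.
\]
The first two groups vanish by the previous step, and the remaining mixed and warping terms are precisely what each case's hypotheses are tailored to annihilate: when $f_{i}$ is constant one has $\mathrm{grad}f_{i}=0$, $X_{B}(f_{i})=0$ and $\zeta_{B}(f_{i})=0$, whereas the hypothesis $\zeta_{B}(f_{i})=0$ in case (2) removes the base-to-fiber gradient sum directly. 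Inserting these relations collapses the display to $\nabla_{X}\zeta=\nabla^{B}_{X_{B}}\zeta_{B}+\sum_{j}\nabla^{j}_{X_{j}}\zeta_{j}=0$, that is, $\zeta$ is parallel.

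The main obstacle I anticipate is the bookkeeping of the mixed terms, in particular the base-to-fiber contributions $\tfrac{\zeta_{B}(f_{j})}{f_{j}}X_{j}$ and the fiber warping terms $f_{j}g_{j}(X_{j},\zeta_{j})\mathrm{grad}f_{j}$ for indices $j$ \emph{other} than the distinguished one: one must check, case by case, that every warping function surviving in the sum either is constant or is differentiated only in directions killed by the stated hypotheses, so that the sums genuinely truncate. As an independent cross-check I would feed $\nabla^{B}\zeta_{B}=0$ and $\nabla^{i}\zeta_{i}=0$ together with the vanishing warping data into the trace identity $(27)$ of Proposition 6.12; all five sums on its right-hand side then drop out, recovering $Tr\big(g(\nabla\zeta,\nabla\zeta)\big)=0$ in agreement with Lemma 6.6.
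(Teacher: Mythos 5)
Your proposal is correct in substance but reassembles the conclusion by a genuinely different route than the paper. Both arguments begin the same way, by applying Lemma 6.6 factor-wise to obtain information about $\zeta_{B}$ on $B$ and $\zeta_{i}$ on $M_{i}$; but the paper only extracts the \emph{trace} conclusion $Tr\big(g_{B}(\nabla^{B}\zeta_{B},\nabla^{B}\zeta_{B})\big)=0$ and $Tr\big(g_{i}(\nabla^{i}\zeta_{i},\nabla^{i}\zeta_{i})\big)=0$, feeds these into the trace identity $(27)$ of Proposition 6.12 (where the warping terms drop out because $f_{i}$ is constant or $\zeta_{B}(f_{i})=0$), concludes $Tr\big(g(\nabla\zeta,\nabla\zeta)\big)=0$, and from this infers that $\zeta$ is parallel. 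You instead use the \emph{parallelism} conclusion of Lemma 6.6, namely $\nabla^{B}\zeta_{B}=0$ and $\nabla^{i}\zeta_{i}=0$, and then verify $\nabla_{X}\zeta=0$ directly from the connection formulas of Lemma 6.7. Your route buys two things: it avoids the paper's final inference from $Tr\big(g(\nabla\zeta,\nabla\zeta)\big)=0$ to $\nabla\zeta=0$, which is automatic only when the relevant quadratic form is definite and is therefore delicate in the pseudo-Riemannian setting the paper works in; and it makes the surviving cross terms explicit. The obstacle you flag is real and not an artifact of your method: in case $(3)$, where only the single $f_{i}$ is assumed constant, the term $\nabla_{X_{j}}\zeta_{B}=\frac{\zeta_{B}(f_{j})}{f_{j}}X_{j}$ for $j\neq i$ (equivalently, the term $\sum_{j}\frac{n_{j}}{f_{j}^{2}}\big(\zeta_{B}(f_{j})\big)^{2}$ in $(27)$) does not vanish without an extra hypothesis such as $\zeta_{B}(f_{j})=0$ for all $j$; the paper's own proof, which claims the other cases follow by the same method, silently passes over this. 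Your cross-check via $(27)$ is consistent with the paper's computation.
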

\begin{proof}
$(1)$ Since $\zeta_{B}$ is a 2-Killing vector field, each $\zeta_{i}$ is a 2-Killing vector field, $\;Ric^B(\zeta_{B},\zeta_{B})\leq0,\,Ric^i(\zeta_{i},\zeta_{i})\leq0,\,B$ is a compact manifold without boundary, each $M_{i}$ is a compact manifold without boundary, by Lemma 6.6, we have $Tr\Big(g_{B}(\nabla^B\zeta_{B},\nabla^B\zeta_{B})\Big)=0,\,Tr\Big(g_{i}(\nabla^i\zeta_{i},\nabla^i\zeta_{i})\Big)=0.$ Then for constant function $f_{i},$ using Proposition 6.12, we get :
$$Tr\Big(g(\nabla\zeta,\nabla\zeta)\Big)=0.$$
Thus $\zeta$ is a parallel vector field with respect to the metric $g.$\par
Using the same proof method, we can easily get $(2),(3),(4),(5).$
\end{proof}

\begin{Theorem}
Let $M=B\times _{f_{1}}M_{1}\times _{f_{2}}M_{2}\times\cdots\times _{f_{m}}M_{m}$ be a multiply warped product. $\zeta$ is a non-trivial 2-Killing vector field on $M,$ let $K$ denote the sectional curvature, then:\\
$(1)$ If $\nabla_{\zeta}\zeta$ is parallel along a curve $\gamma,$ then $K(\zeta,\dot{\gamma})\geq0.$\\
$(2)$ If $\zeta$ is a Killing vector field of constant length, then $K(\zeta,\dot{\gamma})\geq0.$
\end{Theorem}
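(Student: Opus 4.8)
The plan is to derive both statements directly from Corollary 6.5, which gives, for a Killing vector field $\zeta$ of constant length that is also 2-Killing, the identity $R(\zeta,X,X,\zeta)=g(\nabla_{X}\zeta,\nabla_{X}\zeta)\geq 0$. The sectional curvature of the plane spanned by $\zeta$ and $\dot\gamma$ is, up to a positive normalizing denominator, the quantity $R(\zeta,\dot\gamma,\dot\gamma,\zeta)$, so the nonnegativity of $K(\zeta,\dot\gamma)$ should follow from the nonnegativity of the right-hand side, provided the normalizing factor $g(\zeta,\zeta)g(\dot\gamma,\dot\gamma)-g(\zeta,\dot\gamma)^2$ is positive. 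First I would recall the definition $K(\zeta,\dot\gamma)=R(\zeta,\dot\gamma,\dot\gamma,\zeta)/\big(\|\zeta\|^2\|\dot\gamma\|^2-g(\zeta,\dot\gamma)^2\big)$ and note that the denominator is the squared area of the parallelogram, hence positive whenever the two vectors are linearly independent (which I take as implicit in "the plane" determined by $\zeta$ and $\dot\gamma$).

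For part (2) the argument is essentially immediate: since $\zeta$ is a Killing vector field of constant length and also 2-Killing, Corollary 6.5 applies verbatim with $X=\dot\gamma$, yielding $R(\zeta,\dot\gamma,\dot\gamma,\zeta)=g(\nabla_{\dot\gamma}\zeta,\nabla_{\dot\gamma}\zeta)\geq 0$, and dividing by the positive normalizing factor gives $K(\zeta,\dot\gamma)\geq 0$. For part (1), where $\zeta$ is only assumed 2-Killing with $\nabla_{\zeta}\zeta$ parallel along $\gamma$, I would instead start from Corollary 6.3, which characterizes a 2-Killing field by $R(\zeta,X,X,\zeta)=g(\nabla_{X}\zeta,\nabla_{X}\zeta)+g(\nabla_{X}\nabla_{\zeta}\zeta,X)$. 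Setting $X=\dot\gamma$, the first term on the right is manifestly nonnegative, so it remains to show that the second term $g(\nabla_{\dot\gamma}\nabla_{\zeta}\zeta,\dot\gamma)$ vanishes (or is nonnegative) under the hypothesis that $\nabla_{\zeta}\zeta$ is parallel along $\gamma$.

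The hard part will be turning "$\nabla_{\zeta}\zeta$ is parallel along $\gamma$" into the statement that $g(\nabla_{\dot\gamma}\nabla_{\zeta}\zeta,\dot\gamma)=0$. The natural reading is that $\nabla_{\zeta}\zeta$ is parallel-transported along $\gamma$, i.e. $\nabla_{\dot\gamma}(\nabla_{\zeta}\zeta)=0$ as a covariant derivative along the curve; if so, the offending term vanishes identically and part (1) reduces to $R(\zeta,\dot\gamma,\dot\gamma,\zeta)=g(\nabla_{\dot\gamma}\zeta,\nabla_{\dot\gamma}\zeta)\geq 0$, after which division by the positive normalizing factor finishes the proof exactly as in part (2). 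I would therefore make this interpretation explicit, write $\nabla_{\dot\gamma}\nabla_{\zeta}\zeta=0$, substitute into Corollary 6.3, and conclude. The main subtlety to guard against is the sign and positivity of the sectional-curvature denominator: I expect the intended setting to involve a spacelike or otherwise non-null pair so that the normalizing factor is strictly positive, and I would state this requirement rather than suppress it, since in a pseudo-Riemannian signature the sign of $K$ can be reversed by a negative denominator.
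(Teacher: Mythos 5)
Your proposal matches the paper's proof: part (1) substitutes $X=\dot\gamma$ into Corollary 6.3 and uses $\nabla_{\dot\gamma}\nabla_{\zeta}\zeta=0$ to kill the second term, and part (2) applies Corollary 6.5 directly, after which both divide by the squared area $A^2(\zeta,\dot\gamma)$ of the parallelogram spanned by $\zeta$ and $\dot\gamma$. Your added caution about the positivity of the normalizing denominator (and of $g(\nabla_{\dot\gamma}\zeta,\nabla_{\dot\gamma}\zeta)$) in indefinite signature is a reasonable refinement that the paper leaves implicit.
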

\begin{proof}
$(1)$ By Corollary 6.3, we have $R(\zeta,X,X,\zeta)=g(\nabla_{X}\zeta,\nabla_{X}\zeta)+g(\nabla_{X}\nabla_{\zeta}\zeta,X).$ Take $X=\dot{\gamma},$ since $\nabla_{\zeta}\zeta$ is parallel along a curve $\gamma,$ then we have $\nabla_{X}\nabla_{\zeta}\zeta=0,$ so
$$R(\zeta,X,X,\zeta)=g(\nabla_{X}\zeta,\nabla_{X}\zeta)=\|\nabla_{X}\zeta\|\geq0$$
$$\Rightarrow -R(\zeta,X,\zeta,X)=\|\nabla_{X}\zeta\|\geq0$$
$$\Rightarrow K(\zeta,\dot{\gamma})=-\frac{R(\zeta,X,\zeta,X)}{A^2(\zeta,\dot{\gamma})}\geq0,$$
where $A(\zeta,\dot{\gamma})$ is area of the parallelogram generated by $\zeta$ and $\dot{\gamma}.$\\
$(2)$ Take $X=\dot{\gamma},$ by Corollary 6.5, we have
$$R(\zeta,X,X,\zeta)=g(\nabla_{X}\zeta,\nabla_{X}\zeta)\geq0   $$
$$\Rightarrow -R(\zeta,X,\zeta,X)=\|\nabla_{X}\zeta\|\geq0$$
$$\Rightarrow K(\zeta,\dot{\gamma})=-\frac{R(\zeta,X,\zeta,X)}{A^2(\zeta,\dot{\gamma})}\geq0,$$
where $A(\zeta,\dot{\gamma})$ is area of the parallelogram generated by $\zeta$ and $\dot{\gamma}.$
\end{proof}

%Now let $(M,g)$ be an $n$-dimensional pseudo-Riemannian manifold, suppose that $X,Y\in\Gamma(TM),$ then define:
%$$\mathfrak{F}(X,Y)\triangleq g(\nabla_{X}\nabla_{Y}X,Y)+g(\nabla_{Y}X,\nabla_{Y}X)-g(\nabla_{[X,Y]}X,Y)$$

From [2], we can see that $(at-b)^{\frac{1}{3}}\partial t$ is a 2-Killing vector field on $I,$ where $a,b\in\mathbb{R},\,t\in I,$ and $t\neq\frac{b}{a}.$ Then by Corollary 6.10, we have:
\begin{Proposition}
Let $M=I\times _{f_{1}}M_{1}\times _{f_{2}}M_{2}\times\cdots\times _{f_{m}}M_{m}$ be a multiply warped product. $\zeta=(at-b)^{\frac{1}{3}}\partial t+\sum\limits_{i=1}^{m}\zeta_{i}\in\Gamma(TM).$ Suppose each $\zeta_{i}$ is a
2-Killing vector field on $M_{i},$ then $\zeta$ is a 2-Killing vector field on $M$ if each $\zeta_{i}$ is a homothetic vector field on $M_{i}$ with $c_{i}$ satisfying:
$$\frac{a}{3}f_{i}\dot{f_{i}}+(f_{i}\ddot{f_{i}}+\dot{f_{i}}^2)(at-b)=-2c_{i}f_{i}\dot{f_{i}}(at-b)^{\frac{2}{3}}.
\eqno{(28)}$$
\end{Proposition}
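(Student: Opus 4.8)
The plan is to apply Corollary 6.10 with $B=I$ and $\zeta_{B}=(at-b)^{1/3}\partial t$. First I would invoke the fact recalled from [2] immediately above the statement: $(at-b)^{1/3}\partial t$ is a 2-Killing vector field on $I$. Together with the standing hypothesis that each $\zeta_{i}$ is a 2-Killing vector field on $M_{i}$, this verifies the hypothesis of Corollary 6.10 that $\zeta_{B}$ and each $\zeta_{i}$ are 2-Killing. Since we also assume each $\zeta_{i}$ is homothetic with factor $c_{i}$, it then suffices to show that the defining relation (26) of condition (2) of that corollary reduces to equation (28) for this particular $\zeta_{B}$; the corollary then yields that $\zeta$ is 2-Killing on $M$.

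The computational heart is to evaluate the two directional derivatives appearing in (26). Because $B=I$, each $f_{i}$ depends only on $t$ and $\zeta_{B}$ acts as $(at-b)^{1/3}\tfrac{d}{dt}$, so $\zeta_{B}(f_{i})=(at-b)^{1/3}\dot{f_{i}}$. Applying $\zeta_{B}$ a second time and using the product rule on the fractional power gives $\zeta_{B}\big(\zeta_{B}(f_{i})\big)=\tfrac{a}{3}(at-b)^{-1/3}\dot{f_{i}}+(at-b)^{2/3}\ddot{f_{i}}$.

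Substituting these expressions into (26) produces $\tfrac{a}{3}(at-b)^{-1/3}f_{i}\dot{f_{i}}+(at-b)^{2/3}(f_{i}\ddot{f_{i}}+\dot{f_{i}}^{2})=-2c_{i}(at-b)^{1/3}f_{i}\dot{f_{i}}$. Multiplying both sides by $(at-b)^{1/3}$ clears the negative exponent and yields precisely equation (28). Thus the stated homotheticity-plus-(28) condition is exactly condition (2) of Corollary 6.10, and the desired conclusion follows.

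The argument is essentially a single substitution, so there is no real obstacle; the only point requiring care is tracking the fractional powers of $(at-b)$ through the second application of $\zeta_{B}$, so that after multiplying through by $(at-b)^{1/3}$ the exponents on each term combine into those appearing in (28).
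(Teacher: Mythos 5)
Your proposal is correct and follows exactly the route the paper intends: the paper offers no written proof beyond the phrase ``by Corollary 6.10,'' and your substitution of $\zeta_{B}=(at-b)^{1/3}\partial t$ into equation (26), with the careful tracking of the fractional powers and the final multiplication by $(at-b)^{1/3}$, is precisely the omitted computation and it checks out.
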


We recall the definition of generalized Kasner space-times in $[9].$
\begin{Definition}
A {\bfseries generalized Kasner space-time} $(M, g)$ is a Lorentzian multiply warped product of the form
$M=I \times_{\phi^{p_{1}}}M_{1} \times_{\phi^{p_{2}}}M_{2}\times\cdots \times_{\phi^{p_{m}}}M_{m}$ with the metric tensor $g=-dt^2\oplus \phi^{2p_{1}}g_{M_{1}}\oplus \phi^{2p_{2}}g_{M_{2}}\oplus\cdots\oplus \phi^{2p_{m}}g_{M_{m}},$
where $\phi:I\to(0,\infty)$ is smooth and $p_{i}\in\mathbb{R},$ for any $i\in\{1,\dots,m\}$ and also $I=(t_{1},t_{2}).$
\end{Definition}
Then consider that $f_{i}=\phi^{p_{i}},$ by Proposition 6.15, we get:
\begin{Proposition}
Let $M=I\times _{\phi^{p_{1}}}M_{1}\times _{\phi^{p_{2}}}M_{2}\times\cdots\times _{\phi^{p_{m}}}M_{m}$ be a generalized Kasner space-time. $\zeta=(at-b)^{\frac{1}{3}}\partial t+\sum\limits_{i=1}^{m}\zeta_{i}\in\Gamma(TM).$ Suppose each $\zeta_{i}$ is a
2-Killing vector field on $M_{i},$ then $\zeta$ is a 2-Killing vector field on $M$ if each $\zeta_{i}$ is a homothetic vector field on $M_{i}$ with $c_{i}$ satisfying:
$$\frac{a}{3}+\frac{2p_{i}-1}{\phi}(at-b)=-2c_{i}(at-b)^{\frac{2}{3}}.   \eqno{(29)}$$
\end{Proposition}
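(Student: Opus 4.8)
The plan is to read off this statement as the special case $f_i=\phi^{p_i}$ of Proposition 6.15, so that essentially no new geometry is required---only a rewriting of the sufficient condition (28). First I would check that the hypotheses of Proposition 6.15 are in force: the base factor $I$ carries the vector field $(at-b)^{1/3}\partial t$, which is $2$-Killing on $I$ by the computation from [2] recalled just above, and each $\zeta_i$ is assumed $2$-Killing on $M_i$. Hence Proposition 6.15 applies verbatim, and it remains only to transcribe its condition (28) under the substitution $f_i=\phi^{p_i}$.

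Next I would compute the $t$-derivatives of $f_i=\phi^{p_i}$ by the chain rule. One finds $\dot{f_i}=p_i\phi^{p_i-1}\dot\phi$, hence $f_i\dot{f_i}=p_i\phi^{2p_i-1}\dot\phi$. The second-order combination in (28) is handled most cleanly by noting the telescoping identity $f_i\ddot{f_i}+\dot{f_i}^2=\frac{d}{dt}\big(f_i\dot{f_i}\big)$; differentiating the previous expression then gives $f_i\ddot{f_i}+\dot{f_i}^2=p_i(2p_i-1)\phi^{2p_i-2}\dot\phi^{2}+p_i\phi^{2p_i-1}\ddot\phi$. Substituting these into (28) and factoring out the common factor $p_i\phi^{2p_i-1}\dot\phi$ (nonzero since $\phi>0$ and, for a genuine warping, $p_i\dot\phi\neq0$) from both sides, after cancellation the condition becomes
\[
\frac{a}{3}+\left[(2p_i-1)\frac{\dot\phi}{\phi}+\frac{\ddot\phi}{\dot\phi}\right](at-b)=-2c_i(at-b)^{2/3}.
\]

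I expect the power-rule bookkeeping to be unproblematic once the identity $f_i\ddot{f_i}+\dot{f_i}^2=\big(f_i\dot{f_i}\big)'$ is exploited to keep the second-order term compact. The genuinely delicate step is matching this reduced condition to the stated form (29): the two agree exactly under the classical Kasner normalization $\phi(t)=t$, so that $\dot\phi\equiv1$ and $\ddot\phi\equiv0$, whereupon the bracket collapses to $(2p_i-1)/\phi$ and the $\ddot\phi/\dot\phi$ term drops out, recovering (29). I would therefore make this normalization explicit; for a fully general $\phi$ one should instead keep the residual factors $\dot\phi/\phi$ and $\ddot\phi/\dot\phi$ and present the condition in the displayed form above. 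This is the only point where care is needed, the remainder being a direct substitution into Proposition 6.15.
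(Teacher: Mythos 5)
Your approach is exactly the paper's: the author gives no separate proof of this proposition, deriving it solely by the substitution $f_i=\phi^{p_i}$ into condition (28) of Proposition 6.15, which is precisely what you do. Your differentiation is correct, and the discrepancy you uncover is genuine: dividing (28) by $f_i\dot f_i=p_i\phi^{2p_i-1}\dot\phi$ (legitimate only where $p_i\dot\phi\neq0$) yields
$$\frac{a}{3}+\Bigl[(2p_i-1)\frac{\dot\phi}{\phi}+\frac{\ddot\phi}{\dot\phi}\Bigr](at-b)=-2c_i(at-b)^{2/3},$$
which reduces to the stated (29) only when $\dot\phi\equiv1$ (e.g.\ $\phi(t)=t$, the classical Kasner normalization), a restriction that Definition 6.16 does not impose. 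So what you have proved is a corrected version of the proposition; as literally stated for arbitrary smooth $\phi$, equation (29) is not the specialization of (28), and the sufficiency claim does not follow from Proposition 6.15. Your proposed fix --- either normalize $\phi(t)=t$ explicitly or retain the residual factors $\dot\phi/\phi$ and $\ddot\phi/\dot\phi$ --- is the right one, and you are also right to flag that at points where $f_i\dot f_i=0$ one must fall back on the undivided condition (28).
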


{\bfseries Acknowledgement.}
We would like to thank the referee for his(her) careful reading and helpful comments.

% ²Î¿¼ÎÄÏ×

\clearpage
\end{document}